\newcommand{\margnote}[1]{
\ifthenelse{\boolean{shownotes}}%
{\marginpar{\raggedright\tiny\texttt{#1}}}%
{}%
}
\newcommand{\hole}[1]{
\ifthenelse{\boolean{shownotes}}%
{\begin{center} \fbox{ \rule {.25cm}{0cm} \rule[-.1cm]{0cm}{.4cm}
\parbox{.85\textwidth}{\begin{center} \texttt{#1}\end{center}} \rule
{.25cm}{0cm}}\end{center}} {} }
\title[Global existence to self-consistent chemotaxis-fluid systems]{Global existence and decay rates \\
to self-consistent chemotaxis-fluid system}
\author[Carrillo]{Jose A. Carrillo}
\address[Jose A. Carrillo]{\newline Mathematical Institute, University of Oxford \newline
Andrew Wiles Building, Radcliffe Observatory Quarter, Woodstock Road, OX2 6GG, UK}
\email{carrillo@maths.ox.ac.uk}
\author[Peng]{Yingping Peng}
\address[Yingping Peng]{\newline School of Mathematics \newline
Southwest Jiaotong University, Chengdu 611756, China}
\email{yingpingpeng@swjtu.edu.cn}
\author[Xiang]{Zhaoyin Xiang}
\address[Zhaoyin Xiang]{\newline School of Mathematical Sciences\newline
University of Electronic Science and Technology of China, Chengdu 611731, China}
\email{zxiang@uestc.edu.cn}
\numberwithin{equation}{section}
\newtheorem{theorem}{Theorem}[section]
\newtheorem{lemma}{Lemma}[section]
\newtheorem{proposition}{Proposition}[section]
\newtheorem{remark}{Remark}[section]
\newtheorem{definition}{Definition}[section]
\newcommand{\R}{\mathbb R}
\newcommand{\om}{\omega}
\newcommand{\bq}{\begin{equation}}
\newcommand{\eq}{\end{equation}}
\newcommand{\lt}{\left}
\newcommand{\rt}{\right}
\newcommand{\pa}{\partial}
\newcommand{\na}{\nabla}
\newcommand{\cd}{\cdot}
\newcommand{\ep}{\varepsilon}
\newcommand{\Del}{\Delta}
\newcommand{\ga}{\gamma}
\newcommand{\f}{\frac}
\newcommand{\bfu}{\mathbf{u}}
\newcommand{\bfv}{\mathbf{v}}
\newcommand{\bfw}{\mathbf{w}}
\newcommand{\nn}{\notag}
\begin{document}
\allowdisplaybreaks

\date{\today}

\subjclass[]{}
\keywords{Chemotaxis-fluid system, Self-consistent, Blow-up criteria, Global solvability, Decay rates.}

\begin{abstract}
In this paper, we investigate a chemotaxis-fluid system  involving both the effect of potential force on cells and the effect of chemotactic force on fluid:
\begin{equation*}
\left\{
\begin{split}
\pa_t n  +  \bfu\cd\na n & =  \Del n  -  \na\cd\left(\chi(c)n\na c\right)  +  \na\cd(n\na\phi),   \\
\pa_t c  +  \bfu\cd\na c  &=  \Del c  -  nf(c),  \\
\pa_t\bfu + \kappa(\bfu\cd\na)\bfu  + \na P & =  \Del\bfu - n\na\phi  +  \chi(c)n\na c,  \\
\na\cd\bfu &= 0
\end{split}
\right. 
\end{equation*}
in $\R^d\times(0,T)\, (d=2,3)$. One of the novelties and difficulties here is that the coupling in this model is stronger and more nonlinear than the most-studied chemotaxis-fluid model.   We  will first establish several extensibility criteria of classical solutions, which ensure us to extend the local solutions to global ones in the three dimensional chemotaxis-Stokes case and in the two dimensional chemotaxis-Navier-Stokes version under suitable smallness assumption on $\|c_0\|_{L^{\infty}}$ with the help of a new entropy functional inequality. Some further decay estimates are also obtained under some suitable growth restriction on the potential $\nabla \phi$ at infinity.  As a byproduct of the entropy functional inequality, we also establish the global-in-time existence of weak solutions to the three dimensional chemotaxis-Navier-Stokes system.  To the best of our knowledge, this  seems to be the first work addressing the global well-posedness and decay property of solutions to the Cauchy problem of self-consistent chemotaxis-fluid system. 

\end{abstract}

\maketitle \centerline{\date}

\section{Introduction}

\subsection{Background \& literature review}

In nature, most living cells or organisms (e.g. \textit{Dictyostelium}, \textit{Bacillus subtilis}, \textit{Escherichia coli}, etc.) are endowed with the ability to sense certain stimulating chemical signals (e.g. nutrients) in the environment (mostly the viscous fluid) and adapt their movements accordingly. A significant number of experiments and analytical studies have revealed noticeable facts that organisms and the complex surrounding fluid may substantially affect the motion of them each other through e.g. chemotactic forces and gravitational forces by organisms, buoyant forces by fluid, etc. For example, to describe this kind of cell-fluid interactions mathematically, Tuval et al. \cite{TCD2005PNAS} conducted a highly influential experiment in a water drop suspended with swimming \textit{Bacillus subtilis}, and proposed the following chemotaxis-fluid model
\begin{equation}\label{EQ-Tuval}
\left\{
\begin{split}
\pa_t n  +  \bfu\cd\na n  & =  \lambda\Del n  -  \na\cd\big(\chi(c)n\na c\big),   \\
\pa_t c  +  \bfu\cd\na c  & =  \nu\Del c  -  nf(c),  \\
\pa_t\bfu + \kappa(\bfu\cd\na)\bfu  + \na P  & = \mu \Del\bfu - n\na\phi,  \\
\na\cd\bfu &= 0, \qquad \qquad x\in\Omega,\,\,\, t>0.
\end{split}
\right.
\end{equation}
Here the time evolution of cell density $n=n(x, t)$ is described by \eqref{EQ-Tuval}$_1$, in which the chemotaxis-induced aggregation with chemotactic sensitivity $\chi$, diffusion with strength $\lambda$ caused by random Brownian motion, and transportation by ambient fluid $\bfu=\bfu(x, t)$ subjected to incompressible (Navier-)Stokes equation with pressure $P=P(x,t)$ are considered.  The signal with concentration $c=c(x, t)$ also diffuses with strength $\nu$, is transported by ambient fluid and is consumed by cells with consumption rate $f$. The external force $-n\na\phi$ exerted on the fluid by cells can be produced by various physical mechanisms, e.g. gravity, centrifugal, electric forces, magnetic forces, etc. We refer to \cite{Lorz2010M3AS,TCD2005PNAS} for more details concerning its physical background and \cite{Bellomo2016M3} for a different derivation (under the framework of the kinetic theory) of chemotaxis-fluid models.

The fundamental mathematical challenges arising in the analysis of \eqref{EQ-Tuval} appear to consist in two parts, i.e. one is the nonlinear cross-diffusion term $-\na\cd\big(\chi(c)n\na c\big)$ which may enforce blow-up solutions (see \cite{Bellomo2015M3} and references therein), and the other one arises from the well-known incompleteness theory of existence and regularities with large initial data for the Navier-Stokes system. Since the seminal analytical work \cite{Lorz2010M3AS} proved the local existence of weak solutions, there have been plenty of scholars devoting themselves to the well-posedness and long-time behavior of solutions to \eqref{EQ-Tuval}. For instance,  Duan et al. \cite{Duan2010CPDE}  proved the global-in-time existence and explicit decay rates of classical solutions for the chemotaxis-Navier-Stokes system near constant state $(n_{\infty}, 0, \mathbf{0})$ in $\R^3$, and also established global-in-time existence of weak solutions for the chemotaxis-Stokes system in $\R^2$ provided that the external forcing $\phi$ is weak or the signal concentration $c$ is small, i.e.
\begin{align}\label{weak-phi}
\phi\geq0,\,\, \na\phi\in L^{\infty}(\R^2),\,\,
\sup_{x\in\R^2}\left(\om(x)|\na\phi(x)|+\om^2(x)|\na^2\phi(x)|\rt)\,\,\textrm{and}\,\,\|c_0\|_{L^4(\R^2)}\,\,  \textrm{are small},
\end{align}
or
\begin{equation}\label{weak-c}
\phi\geq0,\, \na\phi\in L^{\infty}(\R^2), \,
\sup_{x\in\R^2}\left(\om(x)|\na\phi(x)|+\om^2(x)|\na^2\phi(x)|\rt)<\infty,\, \,\|c_0\|_{L^{\infty}(\R^2)}\,\,\textrm{is small},
\end{equation}
where $\om(x)=(1+|x|)(1+\ln(1+|x|))$. In \cite{Liu2011AIHA}, Liu-Lorz got rid of the decay of the potential $\phi$ at infinity and the smallness of $c_0$ in \eqref{weak-phi} and \eqref{weak-c} by making technical assumptions on $\chi$ and $f$:
\begin{align}\label{chi-f-1}
\na\phi\in L^{\infty}(\R^2),\quad  \chi, \, \chi', \, f, \, f'\geq0,\quad  \f{d^2}{dc^2}\lt(\f{f(c)}{\chi(c)}\rt)<0,\quad  \f{\chi' f + \chi f'}{\chi}>0,
\end{align}
and obtained the global existence of weak solutions even for the full chemotaxis-Navier-Stokes system. Then Chae et al. \cite{Chae13DCDS} presented some blow-up criteria for the local solutions to the chemotaxis-Navier-Stokes system in $\R^d \,(d=2,3)$, and proved the global existence of classical solutions in $\R^2$ on quite different assumption from \eqref{chi-f-1} that for some constant $\mu$
\begin{align}\label{chi-f-2}
\phi, \, \chi, \, \chi', \, f, \, f'\geq0,\quad  \sup_{c}|\chi(c)-\mu f(c)|<\varepsilon\,\,\,\,  \textrm{for a sufficiently small} \,\,\varepsilon>0.
\end{align}
In particular, if the last condition on $\chi$ and $f$ in \eqref{chi-f-2} is replaced by $\chi(c)-\mu f(c)=0$, then the global existence of weak solutions was also showed in $\R^3$ in \cite{Chae13DCDS}. In \cite{Chae14CPDE}, they further established the global existence of classical solutions and explicit temporal decay rates under the smallness assumptions on initial data. For more rigorous analytical results for the Cauchy problems of the coupled chemotaxis-fluid system \eqref{EQ-Tuval}, we refer to \cite{Duan2014IMRN, Li2016JDE, Zhang2014SIAM, Duan2017JDE} and references therein. Additionally, as far as the physical domain is concerned, various types of initial-boundary value problems (in bounded domains or unbounded domains with finite depth) for system \eqref{EQ-Tuval} are extensively studied (see e.g. \cite{Winkler2012CPDE, Winkler2014ARMA, Winkler2021CPDE, Peng2018M3, Peng2019JDE} and references therein). We also remark that apart from the model \eqref{EQ-Tuval} itself, a large amount of significant variants acting as an additional regularizing mechanism for preventing possible blow-up solutions have been widely studied as well. For instance, taking the nonlinearly enhanced cell diffusion at large densities into consideration by replacing $\Del n$ in \eqref{EQ-Tuval}$_1$ with porous medium-type $\Del n^m$ for $m>1$, we refer to \cite{Francesco2010DCDS,Duan2014IMRN,Tao2012DCDS,Liu2011AIHA} for related Cauchy problems and \cite{Ishida2015DCDS,Tao2012DCDS,Zhang2015JDE} for initial-boundary value problems.

While system \eqref{EQ-Tuval} and its relative variants presented above have been well studied and understood numerically (\cite{TCD2005PNAS,Chertock2012JFM,Lee2015EJM}) and analytically, Lorz \cite{Lorz2010M3AS} and Di Francesco et al. \cite{Francesco2010DCDS} pointed out that it could be more realistic to include both the effect of gravity (potential force) on cells and the effect of the chemotactic force on fluid, i.e. they extended the system \eqref{EQ-Tuval} to the following self-consistent version
\begin{equation}\label{EQ-m}
\left\{
\begin{split}
\pa_t n  +  \bfu\cd\na n  & =  \Del n^m  -  \na\cd\big(\chi(c)n\na c\big)  +  \na\cd(n\na\phi),   \\
\pa_t c  +  \bfu\cd\na c  & =  \Del c  -  nf(c),  \\
\pa_t\bfu + \kappa(\bfu\cd\na)\bfu  + \na P  & =  \Del\bfu - n\na\phi  +  \chi(c)n\na c,  \\
\na\cd\bfu & = 0,  \qquad \qquad x\in\Omega,\,\,\, t>0.
\end{split}
\right.
\end{equation}
The reasoning behind the coupling $\chi(c)n\na c$ in equation \eqref{EQ-m}$_3$ is that the fluid will exert frictional force on the moving cells to make the cells move without acceleration and thus that the reaction forces act on the fluid, which also matches the nonlinear cross-diffusion term in the cell density in equation \eqref{EQ-m}$_1$.  We would like to remark that the similar forcing $n\na c$ was also appeared in the coupling Nernst-Planck-Navier-Stokes system (see \cite{Constantin2019ARMA} and references therein).  

In contrast to the large amount of existing works on system \eqref{EQ-Tuval} and its variants, the researches on the  well-posedness of system \eqref{EQ-m} are far fewer. When $\Omega\subset\R^2$ is a bounded domain, Di Francesco et al. \cite{Francesco2010DCDS} proved the existence of global weak solution to the no-flux/no-flux/no-slip boundary value problem of system \eqref{EQ-m} with $\kappa=0$ and $\f32<m\leq2$, which was extended to $m>1$ by  Yu \cite{Yu2020M2} for $\kappa=0$ again and by Wang  \cite{Wang2020DCDS} for general $\kappa\in\R$, respectively.  In the three-dimensional setting,  the similar  global existence  was also obtained in case $\kappa=0$ and $m>\f43$ by \cite{Wang2020JDE}.  To the best of our knowledge, the only available result for the linear diffusion case $m=1$ is due to Lorz \cite{Lorz2010M3AS}, where the local existence of weak solutions to system \eqref{EQ-m} with $\kappa=0$ in a planar bounded domain was established.

\subsection{Main results} 

Motivated by above works, we concern in this paper with the Cauchy problem for the self-consistent chemotaxis-(Navier-)Stokes model with linear cell diffusion
\begin{equation}\label{EQ}
\left\{
\begin{split}
\pa_t n  +  \bfu\cd\na n  & =  \Del n  -  \na\cd\left(\chi(c)n\na c\right)  +  \na\cd(n\na\phi),   \\
\pa_t c  +  \bfu\cd\na c  & =  \Del c  -  nf(c),  \\
\pa_t\bfu + \kappa(\bfu\cd\na)\bfu  + \na P & =  \Del\bfu - n\na\phi  +  \chi(c)n\na c,  \\
\na\cd\bfu & = 0
\end{split}
\right.
\end{equation}
in the space-time region $\R^d\times (0,T)$ with $d=2, 3$, which will be supplemented with the initial conditions
\begin{align}\label{initial}
(n, c, \bfu)\big|_{t=0} = (n_0, c_0, \bfu_0)  \qquad \text{in}\quad \R^d. 
\end{align}
To state our results precisely,  we assume basically that
\begin{enumerate}
  \item[(\textbf{A}):] $\chi, \, f\in C^1\big([0, +\infty)\big)$ with $f(0)=0$ and $f(s)\geq 0$ for all $s\ge 0$;
  \item[(\textbf{B}):] $\na\phi\in L^{\infty}(\R^d)$;
  \item[(\textbf{C}):] The initial data $(n_0, c_0, \bfu_0)$ satisfy that
  \begin{align*}
  n_0\geq0, \qquad c_0\geq0, \qquad \na\cd\bfu_0=0  \qquad \text{in}\quad \R^d
  \end{align*}
  and that
  \[
  n_0(1 + |x| + |\ln n_0|)\in L^1(\R^d),  \quad
  c_0 \in L^1(\R^d)\cap L^{\infty}(\R^d)\cap H^1(\R^d), \quad \bfu_0\in H^1(\R^d;\R^d).
  \]
\end{enumerate}

Our first main result is concerning the Serrin-type extensibility criteria  in three dimensional case.

\begin{theorem}[Extensibility criteria  in $\R^3$]\label{blowup1}
Let the initial data $(n_0,c_0,\bfu_0)\in H^{m-1}(\R^3)\times H^m(\R^3)\times H^m(\R^3:\R^3)$ with  $m \ge 3$ satisfy the assumptions $(\bf{A})$, $(\bf{B})$ and $(\bf{C})$  with $d=3$. Suppose that $\chi, \,  f \in C^m\big([0,+\infty)\big)$ and $\phi\in W^{m,\infty}(\R^3)$. If the maximal existence time $T^*$ of system \eqref{EQ}-\eqref{initial} is finite, then in the case  $\kappa\neq0$,  it holds that 
\begin{equation}\label{blow-1}
\int_0^{T^*}\|\na c\|^{s_1}_{L^{r_1}(\R^3)}   +   \|\bfu\|^{s_2}_{L^{r_2}(\R^3)} dt =  +\infty
\end{equation}
for any $(r_i, s_i)$ satisfying $\f{3}{r_i} + \f{2}{s_i}\le 1$ and $3<r_i \le +\infty \, (i=1, 2)$, while in the case $\kappa=0$, it holds that 
\begin{equation}\label{blow-1-1}
\int_0^{T^*}\|\na c\|^{s_1}_{L^{r_1}(\R^3)}    dt =  +\infty
\end{equation}
for any $(r_1, s_1)$ satisfying $\f{3}{r_1}+\f{2}{s_1}\leq1$ and $3<r_1\leq+\infty$.
\end{theorem}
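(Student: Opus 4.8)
The plan is a standard continuation argument. Suppose, for a contradiction, that $T^*<\infty$ while the space--time integral in \eqref{blow-1} (resp. \eqref{blow-1-1}) is finite for an admissible pair. I would show that $(n,c,\bfu)$ stays bounded, uniformly on $[0,T^*)$, in the class $H^{m-1}(\R^3)\times H^m(\R^3)\times H^m(\R^3;\R^3)$; since the maximal solution is built from a local existence theorem in this class, this would let the solution continue past $T^*$, contradicting the definition of $T^*$. The groundwork is the set of elementary bounds coming straight from \eqref{EQ}: mass conservation $\|n(t)\|_{L^1}=\|n_0\|_{L^1}$; the comparison principle $0\le c\le\|c_0\|_{L^\infty}$ for the signal equation (using $f(0)=0$ and $f\ge0$), so that $\chi(c)$ and $f(c)$ are bounded by data-dependent constants; and $\|c(t)\|_{L^1}\le\|c_0\|_{L^1}$.

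The first substantial step is to bound $n$ in $L^\infty(0,T^*;L^p(\R^3))$ for every finite $p$. Testing the cell equation with $pn^{p-1}$, the transport term drops out since $\na\cd\bfu=0$, the drift term is absorbed into the diffusion using $\na\phi\in L^\infty$, and the cross-diffusion term reduces, after one integration by parts, to a contribution controlled by $\int_{\R^3}n^p|\na c|^2$. By H\"older and the Gagliardo--Nirenberg inequality in $\R^3$,
\[
\int_{\R^3}n^p|\na c|^2 \ls \|\na c\|_{L^{r_1}}^2\,\|n\|_{L^{pr_1/(r_1-2)}}^p \ls \|\na c\|_{L^{r_1}}^2\,\|\na n^{p/2}\|_{L^2}^{6/r_1}\,\|n\|_{L^p}^{p(1-3/r_1)},
\]
and since $r_1>3$ the exponent $6/r_1<2$, so Young's inequality absorbs $\|\na n^{p/2}\|_{L^2}^2$ into the dissipation at the price of a factor $\|\na c\|_{L^{r_1}}^{2r_1/(r_1-3)}$, which belongs to $L^1(0,T^*)$ because $\f{3}{r_1}+\f{2}{s_1}\le1$ forces $\f{2r_1}{r_1-3}\le s_1$ and $T^*<\infty$. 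Gronwall's lemma then yields $n\in L^\infty(0,T^*;L^p)$ and $\na n^{p/2}\in L^2\big((0,T^*)\times\R^3\big)$ for every finite $p$.

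Next I would obtain the energy-level bounds on $\bfu$ and $\na c$. For $\kappa\ne0$, I would couple the $L^2$-energy identity for the fluid equation (the convective term $\kappa(\bfu\cd\na)\bfu$ vanishes against $\bfu$) with the identity obtained by testing the signal equation with $-\Del c$: the forcings $-n\na\phi\cd\bfu$ and $\chi(c)n\na c\cd\bfu$ are handled via $\|n\|_{L^{6/5}}$, $\|n\|_{L^3}$ (bounded by the previous step) and Sobolev embedding; the transport term $\int(\bfu\cd\na c)\Del c$ is bounded, via Gagliardo--Nirenberg, by $\ep\|\Del c\|_{L^2}^2+C\|\bfu\|_{L^{r_2}}^{2r_2/(r_2-3)}\|\na c\|_{L^2}^2$ with $\f{2r_2}{r_2-3}\le s_2$; and $\int nf(c)\Del c$ by $\ep\|\Del c\|_{L^2}^2+C\|n\|_{L^2}^2$. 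All Gronwall coefficients lying in $L^1(0,T^*)$, this gives $\bfu\in L^\infty(0,T^*;L^2)$, $c\in L^\infty(0,T^*;H^1)$ and $\na\bfu,\Del c\in L^2\big((0,T^*)\times\R^3\big)$. For $\kappa=0$ the fluid equation is the linear Stokes system with right-hand side $-n\na\phi+\chi(c)n\na c$, which the $L^p$-bounds place in $L^{s_1}(0,T^*;L^b)$ for every $b<r_1$; Stokes maximal $L^q$--$L^r$ regularity (splitting off a short initial interval, on which local existence already furnishes bounds, to accommodate the $H^1$ datum) then gives $\bfu\in L^{s_1}(0,T^*;L^\infty(\R^3))$, which is an admissible Serrin pair since $\f{3}{r_1}+\f{2}{s_1}\le1$ forces $s_1\ge2$ --- this is exactly why \eqref{blow-1-1} imposes no condition on $\bfu$, and it reduces the $\kappa=0$ case to the preceding one. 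With these bounds and $\phi\in W^{m,\infty}$ in hand I would then bootstrap one order at a time --- $\na\bfu$ in $L^\infty_tL^2$ (testing the fluid equation with $-\Del\bfu$, the term $\kappa(\bfu\cd\na)\bfu\cd\Del\bfu$ being controlled by the Serrin norm of $\bfu$ and the forcing by the $L^p$-bounds of $n$ and the $H^1$-bound of $\na c$), then $\na^2 c$, then $\na n$, and so on --- each stage combining parabolic smoothing with Gagliardo--Nirenberg and Moser-type product estimates and the bounds already secured, the Serrin quantities $\|\na c\|_{L^{r_1}}$ and $\|\bfu\|_{L^{r_2}}$ absorbing the critical transport and chemotactic terms as before, until one reaches uniform-in-$t$ bounds in $H^{m-1}\times H^m\times H^m$ on $[0,T^*)$ --- the desired contradiction.

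The main obstacle is the tight, doubly nonlinear coupling: the chemotactic flux $\chi(c)n\na c$ is at once the cross-diffusion in the cell equation and the body force in the fluid equation, and likewise for $n\na\phi$, so no unknown can be estimated in isolation. This is resolved by the right ordering --- the $L^p$-estimate for $n$ closes by itself because the transport term drops out, requiring only the $\na c$-Serrin norm --- and by precise exponent bookkeeping: the borderline condition $\f{3}{r_i}+\f{2}{s_i}\le1$ is exactly what lets the Gagliardo--Nirenberg interpolations combine with Young's inequality in time so as to leave $L^1_t$-integrable Gronwall coefficients, and verifying this at each level of the bootstrap is the bulk of the work.
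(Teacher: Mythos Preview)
Your proposal is correct and follows the same overall contradiction-plus-energy-bootstrap strategy as the paper, but it differs from the paper's argument in a few technical respects worth noting.

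First, you front-load the full $L^\infty_tL^p$ control on $n$ for every finite $p$; the paper only takes $p=2$ at the opening step and postpones higher integrability. Your choice is harmless and arguably streamlines the later bootstrap, at the cost of a slightly heavier first step.

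Second, for the transport term $\int_{\R^3}(\bfu\cdot\nabla c)\,\Delta c$ in the $\nabla c$-energy identity you interpolate and invoke the Serrin norm $\|\bfu\|_{L^{r_2}}$. The paper instead integrates by parts once more to rewrite this as $\int_{\R^3}(\nabla^2 c:\nabla\bfu)\,c$ and bounds it by $\|\nabla^2 c\|_{L^2}\|\nabla\bfu\|_{L^2}\|c\|_{L^\infty}$, so that the $\nabla c$-estimate never sees the Serrin norm of $\bfu$; the resulting $\|\nabla\bfu\|_{L^2}^2$ is then absorbed into the simultaneous Gronwall for the combined quantity $\|n\|_{L^2}^2+\|c\|_{H^1}^2+\|\bfu\|_{H^1}^2$. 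This is what lets the paper treat both the $\kappa\neq0$ and $\kappa=0$ cases by pure energy methods.

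Third, and relatedly, your treatment of $\kappa=0$ via Stokes maximal $L^q$--$L^r$ regularity (to manufacture an artificial Serrin bound on $\bfu$ and reduce to the previous case) is correct but heavier machinery than needed. The paper simply observes that when $\kappa=0$ the only place the Serrin norm of $\bfu$ ever entered was the estimate of $\kappa\int_{\R^3}\Delta\bfu\cdot(\bfu\cdot\nabla)\bfu$ in the $\nabla\bfu$-energy, and that term now vanishes identically; combined with the integration-by-parts trick above, no condition on $\bfu$ is required at all. Your route works, but the paper's is more elementary and makes transparent \emph{why} the $\bfu$-Serrin assumption disappears in the Stokes case.
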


The similar results also hold in the two dimensional setting.

\begin{theorem}[Extensibility criteria in $\R^2$]\label{blowup2}
Let the initial data $(n_0, c_0, \bfu_0)\in H^{m-1}(\R^2)\times H^m(\R^2)\times H^m(\R^2: \R^2)$ for $m\geq3$ satisfy the assumptions $(\bf{A})$, $(\bf{B})$ and $(\bf{C})$  with $d=2$. Suppose that $\kappa\in\R$, $\chi, \, f \in C^m\big([0,+\infty)\big)$ and $\phi\in W^{m, \infty}(\R^2)$. If the maximal   existence time $T^*$ of system \eqref{EQ}-\eqref{initial} is finite, then
\begin{equation}\label{blow-2}
\int_0^{T^*}\|\na c\|^{s_3}_{L^{r_3}(\R^2)}  dt  =  +\infty
\end{equation}
for any $(r_3,s_3)$ satisfying $\f{2}{r_3}+\f{2}{s_3}\leq1$ and $2<r_3\le +\infty$.
\end{theorem}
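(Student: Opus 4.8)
The plan is to argue by contradiction: assuming $T^*<\infty$ together with $\int_0^{T^*}\|\na c\|_{L^{r_3}(\R^2)}^{s_3}\,dt<\infty$ for some admissible pair $(r_3,s_3)$, I will show that $\sup_{0<t<T^*}\big(\|n(t)\|_{H^{m-1}}+\|c(t)\|_{H^m}+\|\bfu(t)\|_{H^m}\big)<\infty$, which contradicts the maximality of $T^*$ through the continuation criterion attached to the local existence theory. Since $r_3>2$ we have $\tfrac{2}{r_3}<1$, so interpolating in time (legitimate because $T^*<\infty$) one may replace $s_3$ by the smaller, still admissible exponent $\tfrac{2r_3}{r_3-2}$ and thus assume $\tfrac{2}{r_3}+\tfrac{2}{s_3}=1$; the endpoint $r_3=+\infty$ is only easier. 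I would first record the estimates holding with no hypothesis: conservation of mass $\|n(t)\|_{L^1}=\|n_0\|_{L^1}$; the maximum principle $0\le c\le\|c_0\|_{L^\infty}$ (using $f(0)=0$, $f\ge0$), so that $\chi(c)$ and $f(c)$ stay bounded along the solution; and the energy inequality $\|c(t)\|_{L^2}^2+2\int_0^t\|\na c\|_{L^2}^2\,ds\le\|c_0\|_{L^2}^2$.

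The decisive step is an $L^p$ bound on $n$ for every finite $p$. Testing the first equation with $pn^{p-1}$, the drift term drops out by $\na\cd\bfu=0$, the potential term reduces to $(p-1)\int n^p\Delta\phi\le(p-1)\|\Delta\phi\|_{L^\infty}\|n\|_{L^p}^p$, and the cross-diffusion term becomes $2(p-1)\int\chi(c)\,n^{p/2}\na(n^{p/2})\cd\na c$. I would estimate the latter by Cauchy--Schwarz, then $\int n^p|\na c|^2\le\|n^{p/2}\|_{L^{2r_3/(r_3-2)}}^2\|\na c\|_{L^{r_3}}^2$ (noting $2r_3/(r_3-2)=s_3$), then the two-dimensional Gagliardo--Nirenberg inequality together with $\|n^{p/2}\|_{L^2}^2=\|n\|_{L^p}^p$, and finally Young's inequality to absorb a small multiple of $\|\na n^{p/2}\|_{L^2}^2$ into the diffusion. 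What remains is bounded by $C(p)\|n\|_{L^p}^p\big(1+\|\na c\|_{L^{r_3}}^{s_3}+\|\na c\|_{L^{r_3}}^2\big)$, and the appearance of the exponent $s_3$ (rather than anything larger) is exactly what the Serrin balance delivers. Since $s_3\ge2$, the bracket is $L^1$ in time on $[0,T^*)$, so Grönwall gives $\sup_{t<T^*}\|n(t)\|_{L^p}<\infty$ and $\na n^{p/2}\in L^2_tL^2_x$, for all $p<\infty$.

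Next comes the fluid. In the $L^2$ energy identity for $\bfu$ the convective and pressure terms vanish, $-\int n\na\phi\cd\bfu\le C\big(1+\|\bfu\|_{L^2}^2\big)$ by the $L^2$ control of $n$, and the self-consistent forcing is handled by $\int\chi(c)n\na c\cd\bfu\le C\|n\|_{L^a}\|\na c\|_{L^{r_3}}\|\bfu\|_{L^4}$ for a suitable finite $a$, followed by Ladyzhenskaya's inequality $\|\bfu\|_{L^4}^2\le C\|\bfu\|_{L^2}\|\na\bfu\|_{L^2}$ and Young; the time exponent this produces for $\|\na c\|_{L^{r_3}}$ is $\tfrac43<s_3$, hence integrable, and Grönwall yields $\bfu\in L^\infty_tL^2_x\cap L^2_tH^1_x$. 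From here the proof is completed by a standard (coupled) higher-order bootstrap that never again uses the Serrin hypothesis: testing the second equation with $-\Delta c$ gives $\na c\in L^\infty_tL^2_x$ and $\Delta c\in L^2_tL^2_x$; testing the third with $\Delta\bfu$ (when $\kappa\ne0$ the convective term is handled by $\|\na\bfu\|_{L^4}^2\le C\|\na\bfu\|_{L^2}\|\Delta\bfu\|_{L^2}$) gives $\bfu\in L^\infty_tH^1_x\cap L^2_tH^2_x$; and then coupled energy estimates for higher derivatives of $n$, $c$, $\bfu$ — each nonlinear term absorbed into the parabolic/Stokes dissipation by $\epsilon$-Young, using the already-established $L^p$ bounds on $n$ ($p<\infty$) and the $L^2$/$H^1$ bounds on $\na c$, $\bfu$, together with two-dimensional Gagliardo--Nirenberg interpolation — close successively, via Grönwall, up to the $H^{m-1}\times H^m\times H^m$ level (this is where $\chi,f\in C^m$ and $\phi\in W^{m,\infty}$ enter).

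The main obstacle is the $L^p$ estimate for $n$ in the second step: this is the only place where the Serrin exponent relation has to be matched exactly, and where the cross-diffusion term $-\na\cd(\chi(c)n\na c)$ must be absorbed using solely the assumed control of $\na c$ — fortunately, incompressibility removes $\bfu$ from that estimate entirely. A secondary difficulty is the self-consistent forcing $\chi(c)n\na c$ in the momentum equation, which has no analogue in the classical chemotaxis-fluid system; but in two dimensions Ladyzhenskaya's inequality renders it harmless, since it only requires $\|\na c\|_{L^{r_3}}\in L^{4/3}_t$, which is weaker than the hypothesis.
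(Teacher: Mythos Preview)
Your proposal is correct and follows the same contradiction strategy as the paper: assume the Serrin quantity is finite, bootstrap to $H^{m-1}\times H^m\times H^m$, contradict maximality. The core computation --- testing the $n$-equation, writing $\int n^p|\na c|^2 \le \|n^{p/2}\|_{L^{2r_3/(r_3-2)}}^2\|\na c\|_{L^{r_3}}^2$, applying two-dimensional Gagliardo--Nirenberg to produce exactly the exponent $s_3=\tfrac{2r_3}{r_3-2}$, and absorbing via Young --- is precisely the mechanism the paper uses.

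The one organizational difference worth noting: you establish $n\in L^\infty_tL^p_x$ for \emph{all} finite $p$ at the outset, whereas the paper only takes $p=2$ in its Step~1 and then re-invokes the Serrin hypothesis a couple more times during the bootstrap (e.g.\ when estimating $\|n\na c\|_{L^2}$ in the $\na\bfu$ and $\na^2 c$ equations). Your route front-loads the Serrin condition into a single clean step; after that, as you observe, the remaining bootstrap needs only the weaker integrability $\|\na c\|_{L^{r_3}}\in L^{4/3}_t$ or none at all, since $n\in L^\infty_tL^p_x$ makes products like $n\na c$ controllable via the already-established $\na c\in L^\infty_tL^2_x\cap L^2_tH^1_x$. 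This is a minor but genuine simplification. The paper's incremental route, on the other hand, avoids needing $\Delta\phi\in L^\infty$ at the very first step (it only uses $\na\phi\in L^\infty$ there), though this is immaterial here since $\phi\in W^{m,\infty}$ is assumed anyway.
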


With the help of these extensibility criteria,  we can extend the local solutions to global ones for suitably small  $c_0$.

\begin{theorem}[Global existence of classical solutions in $\R^3$]\label{Stokes}
 Suppose that all assumptions in Theorem \ref{blowup1} hold. If additionally $\|c_0\|_{L^{\infty}(\R^3)}$ is suitably small, then the unique classical solution $(n, c, \bfu)$ of system \eqref{EQ}-\eqref{initial}  in $\R^3$ with $\kappa=0$ exists globally in time and satisfies that for any $T<\infty$,  
\[
(n,c, \bfu)\in L^{\infty}\big(0,T; H^{m-1}(\R^3)\times H^m(\R^3)\times H^m(\R^3;\R^3)\big)
\]
and 
\[
(\na n,\na c, \na\bfu)\in L^2\big(0,T; H^{m-1}(\R^3)\times H^m(\R^3)\times H^m(\R^3;\R^3)\big).
\]
\end{theorem}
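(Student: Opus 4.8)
The strategy is a contradiction argument based on the Serrin-type extensibility criterion of Theorem~\ref{blowup1}. The local well-posedness theory furnishes a unique maximal classical solution of \eqref{EQ}--\eqref{initial} on an interval $[0,T^*)$; if $T^*<\infty$ then, since $\kappa=0$, \eqref{blow-1-1} forces $\int_0^{T^*}\|\na c\|_{L^{r_1}(\R^3)}^{s_1}\,dt=+\infty$ for every pair with $\frac{3}{r_1}+\frac{2}{s_1}\le1$ and $3<r_1\le\infty$. Hence it suffices to produce, using the smallness of $\|c_0\|_{L^\infty}$, an a priori bound $\int_0^{T}\|\na c\|_{L^{r_1}(\R^3)}^{s_1}\,dt<\infty$ on every finite interval $[0,T]$ for just one admissible pair (say $r_1=\infty$, $s_1=2$, or a finite but large $r_1$ with $s_1=\frac{2r_1}{r_1-3}$): this excludes $T^*<\infty$ and yields global existence, and the stated regularity then follows by propagating the higher-order estimates on each $[0,T]$.

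I would first record the elementary a priori bounds. Nonnegativity is preserved and $\|n(t)\|_{L^1}=\|n_0\|_{L^1}$. The comparison principle for the second equation --- legitimate since $nf(c)\ge0$ by $(\mathbf{A})$, $(\mathbf{C})$ and $\na\cd\bfu=0$ --- gives $0\le c(x,t)\le\|c_0\|_{L^\infty(\R^3)}=:\delta$, so that $\chi(c)$ remains in a fixed bounded range while, thanks to $f(0)=0$, $f(c)\le\delta\sup_{[0,\delta]}|f'|$ is \emph{small}. Testing the second equation against $c$ gives $\|c(t)\|_{L^2}\le\|c_0\|_{L^2}$ and $\na c\in L^2(0,T;L^2(\R^3))$ for every finite $T$, and the weighted and logarithmic data hypotheses in $(\mathbf{C})$, together with $|\phi(x)|\le|\phi(0)|+\|\na\phi\|_{L^\infty}|x|$ from $(\mathbf{B})$, control $\int n|x|$ and hence $\int n|\phi|$.

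The core of the proof is the new entropy functional inequality. I would work with a functional of the form $\mathcal F(t)=\int_{\R^3}n\ln n\,dx+\int_{\R^3}n\phi\,dx+\frac12\|\bfu(t)\|_{L^2}^2+\mathcal G(c(t))$, where $\mathcal G$ is a carefully chosen functional of the signal (modelled on the entropy/eikonal functionals known for consumption-type equations, such as $\frac12\int\frac{|\na c|^2}{c}$ or a variant adapted to a general $f$), differentiate it in time, and exploit the self-consistent structure so that the dangerous cell--fluid interaction terms cancel: the advective contributions vanish because $\na\cd\bfu=0$, and with $\kappa=0$ there is no Navier term; the potential drift $\na\cd(n\na\phi)$ only produces terms of the type $\int\na n\cd\na\phi$, absorbed into $-\int\frac{|\na n|^2}{n}$ at the cost of the bounded quantity $\int n|\na\phi|^2\le\|\na\phi\|_{L^\infty}^2\|n_0\|_{L^1}$. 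What survives is the genuinely nonlinear chemotactic cross term $\int\chi(c)\na n\cd\na c$ together with companions such as $\int\chi(c)n\na\phi\cd\na c$ and $\int\chi(c)n\,\bfu\cd\na c$; this is exactly the obstruction created by the stronger coupling in \eqref{EQ}, and here the smallness of $\delta=\|c_0\|_{L^\infty}$ --- via the smallness of $f(c)$ and the dissipation carried by $\mathcal G(c)$ --- is what permits these to be absorbed into $\int\frac{|\na n|^2}{n}+\|\na\bfu\|_{L^2}^2+(\text{dissipation from }\mathcal G)$. The outcome is the uniform bound $\sup_{t<T^*}\mathcal F(t)<\infty$ together with $\int_0^{T^*}\big(\|\na\sqrt n\|_{L^2}^2+\|\na\bfu\|_{L^2}^2+\|c\|_{H^2}^2\big)\,dt<\infty$.

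It then remains to bootstrap $c$ to a Serrin space. From $\na\sqrt n\in L^2_tL^2_x$ and $\|n\|_{L^\infty_tL^1_x}<\infty$, Sobolev/Gagliardo--Nirenberg gives $n\in L^\infty_tL^1_x\cap L^1_tL^3_x\hookrightarrow L^{5/3}_{t,x}$, and boundedness of $c$ makes $nf(c)$ inherit the same integrability; for the Stokes velocity I would apply linear Stokes maximal regularity to $\pa_t\bfu+\na P=\Del\bfu-n\na\phi+\chi(c)n\na c$, whose forcing is controlled by $n$ and $n\na c$ using $\na\phi,\chi(c)\in L^\infty$ and the above bounds, and combine it with $\bfu\in L^\infty_tL^2_x\cap L^2_t\dot H^1_x$ to control $\bfu\cd\na c$ in some $L^q_{t,x}$ with $q>1$. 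Feeding $-\bfu\cd\na c-nf(c)$ as a source into parabolic maximal regularity for $\pa_t c-\Del c$ and iterating finitely many times --- each step improving the integrability of $\na c$, as in the De Giorgi--Moser / maximal-regularity bootstrap for Keller--Segel-type systems --- reaches $\na c\in L^{s_1}_tL^{r_1}_x$ with $\frac{3}{r_1}+\frac{2}{s_1}\le1$, $3<r_1\le\infty$, contradicting \eqref{blow-1-1}; hence $T^*=\infty$. On any finite $[0,T]$ the claimed memberships of $(n,c,\bfu)$ in $L^\infty_t(H^{m-1}\times H^m\times H^m)$ and of $(\na n,\na c,\na\bfu)$ in $L^2_t(H^{m-1}\times H^m\times H^m)$ then follow by re-running the higher-order energy estimates from the proof of Theorem~\ref{blowup1}, which now close because $\int_0^T\|\na c\|_{L^{r_1}}^{s_1}\,dt<\infty$. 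The main obstacle is the entropy step: designing $\mathcal G(c)$ so that the strongly nonlinear couplings $\chi(c)n\na c$ in \emph{both} the $n$- and the momentum equations are compensated, handling the possible degeneracy of $\frac{|\na c|^2}{c}$ near the zeros of $c$ in the whole space, and pinning down how small $\|c_0\|_{L^\infty}$ must be; the $c$-bootstrap is more routine in spirit but must be executed in $\R^3$ without compactness, carefully tracking the extra $n\na c$ forcing in the fluid equation.
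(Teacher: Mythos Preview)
Your overall contradiction strategy via the Serrin criterion \eqref{blow-1-1} is exactly the paper's, and your endgame (close the $H^{m-1}\times H^m\times H^m$ estimates once $\na c$ is in a Serrin space) is correct. The gap is in the entropy step, and it is a real one.

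The term that does not close in your functional is $\int_{\R^3}\chi(c)\,n\,\na c\cdot\bfu$, coming from testing the momentum equation with $\bfu$. Your proposed cure is ``smallness of $\delta$ via smallness of $f(c)$'', but $\chi(c)$ is \emph{not} small (only bounded), so this term carries no small prefactor. To absorb it you would need, say, $\|n\|_{L^4}\|\na c\|_{L^2}\|\bfu\|_{L^4}\le C\|n\|_{L^4}\|\na c\|_{L^2}\|\na\bfu\|_{L^2}^{3/4}\|\bfu\|_{L^2}^{1/4}$, which requires an a~priori bound on $\|n\|_{L^p}$ for some $p>1$; the same obstruction arises in $\int\chi(c)\na n\cdot\na c\le\tfrac12\int\frac{|\na n|^2}{n}+C\|\sqrt{n}\|_{L^4}^2\|\na c\|_{L^4}^2$, where the second factor needs $\|n\|_{L^2}$. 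Entropy dissipation alone gives $n\in L^1_tL^3_x$, not $L^\infty_tL^p_x$, so the loop does not close without additional input. Choosing $\mathcal G(c)=\tfrac12\int|\na c|^2/c$ does not rescue this: its dissipation involves $\|c^{-1/2}\na^2 c\|_{L^2}$ and $\int n f(c)|\na c|^2/c^2$, neither of which controls $n\,\na c$ in a way that compensates an $O(1)$ chemotactic coefficient.

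The paper's remedy is to \emph{precede} the entropy step by an independent $L^p$ bound on $n$ (Lemma~\ref{Lp-n-1}): differentiating the weighted functional $\int_{\R^3}n^p\,e^{(\beta c)^2}$ and choosing $\beta$ large relative to $p$ and $\mathcal C_\chi$, the cross terms are absorbed provided $\|c_0\|_{L^\infty}$ is small enough (this is where the smallness hypothesis is actually spent, not on $f$). With $\|n(t)\|_{L^2}$ and $\|n(t)\|_{L^4}$ bounded, the entropy inequality of Lemma~\ref{entropy} closes immediately with $\mathcal G(c)=\|\na c\|_{L^2}^2$. The subsequent bootstrap is also more direct than your maximal-regularity iteration: one tests the Stokes equation with $-\Delta\bfu$ to get $\na\bfu\in L^\infty_tL^2_x\cap L^2_t\dot H^1_x$, then tests the $c$-equation at the $H^2$ level to obtain $\na^2 c\in L^\infty_tL^2_x$, and finishes via the single Gagliardo--Nirenberg inequality $\|\na c\|_{L^5}^5\le C\|\na^2 c\|_{L^2}^4\|c\|_{L^\infty}$, landing on the admissible pair $(r_1,s_1)=(5,5)$.
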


\begin{remark}\label{3D-NS}
Theorem \ref{Stokes} shows the global existence of classical solution to system \eqref{EQ}-\eqref{initial} with $\kappa=0$. As for the case $\kappa\neq0$, we can only establish the global existence of weak solutions due to the well-known challenge in the Navier-Stokes equations. We will  postpone the proof of such global weak solutions to the appendix section.
\end{remark}

In the two dimensional case, similar global well-posedness can be established even for  the chemotaxis system coupled by the Navier-Stokes equations.

\begin{theorem}[Global existence of classical solution in $\R^2$]\label{2D-NS}
 Suppose that all assumptions in Theorem \ref{blowup2} hold. If additionally $\|c_0\|_{L^{\infty}(\R^2)}$ is suitably small, then the unique classical solution $(n, c, \bfu)$ of  system \eqref{EQ}-\eqref{initial}  in $\R^2$ with $\kappa\in\R$ exists globally in time and satisfies that  for any $T<\infty$, 
\[
(n,c, \bfu)\in L^{\infty}\big(0,T; H^{m-1}(\R^2)\times H^m(\R^2)\times H^m(\R^2;\R^2)\big) 
\]
and 
\[
(\na n,\na c, \na\bfu)\in L^2\big(0,T; H^{m-1}(\R^2)\times H^m(\R^2)\times H^m(\R^2;\R^2)\big).
\]
\end{theorem}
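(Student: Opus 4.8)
The plan is to run essentially the same a priori analysis as in Theorem \ref{Stokes} and then appeal to the extensibility criterion of Theorem \ref{blowup2}. By the local well-posedness theory it suffices to show that on every finite interval $[0,T]$ on which the classical solution exists, $\|\na c\|_{L^{r_3}(\R^2)}$ stays bounded in $L^{s_3}(0,T)$ for one admissible pair --- it is convenient to aim for $r_3=s_3=4$, for which $\f{2}{4}+\f{2}{4}=1$ --- after which Theorem \ref{blowup2} forces $T^*=\infty$, and the claimed $H^m$-regularity follows by propagating the higher-order bounds along the local existence argument. Since \eqref{EQ} in $\R^2$ differs from the chemotaxis-Stokes case only through the inertial term $\kappa(\bfu\cd\na)\bfu$, I expect the whole scheme to parallel the proof of Theorem \ref{Stokes}, with two-dimensional embeddings, Ladyzhenskaya's inequality $\|v\|_{L^4(\R^2)}^2\lesssim\|v\|_{L^2}\|\na v\|_{L^2}$, and the classical global strong solvability of the 2D Navier-Stokes equations with $L^2$-forcing compensating for that extra term.

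First I would record the elementary bounds: the mass identity $\|n(t)\|_{L^1}=\|n_0\|_{L^1}$, the nonnegativity of $n$ and $c$, and the maximum principle for \eqref{EQ}$_2$ (using $n,f\ge0$), which gives $0\le c(x,t)\le\|c_0\|_{L^{\infty}(\R^2)}$ for all $t\ge0$, so the smallness of $\|c_0\|_{L^{\infty}}$ propagates in time; combined with $\|c(t)\|_{L^1}\le\|c_0\|_{L^1}$ this bounds $\|c(t)\|_{L^p}$ for every $p\in[1,\infty]$ and keeps $\|f(c(t))\|_{L^{\infty}}$ small. Next I would invoke the entropy functional inequality that is the technical heart of the paper: with constants $a,b,\delta>0$ depending only on $\|\na\phi\|_{L^{\infty}}$ and on the moduli of $\chi,f$ on $[0,\|c_0\|_{L^\infty}]$, the functional
\[
\me(t)=\int_{\R^2}n\ln n\,dx+a\int_{\R^2}|\na c|^2\,dx+b\,\|\bfu(t)\|_{L^2(\R^2)}^2
\]
satisfies, once $\|c_0\|_{L^{\infty}}$ is small enough that the indefinite contributions of the cross-diffusion $\na\cd(\chi(c)n\na c)$ and of the chemotactic force $\chi(c)n\na c$ on the fluid get absorbed,
\[
\f{d}{dt}\me(t)+\delta\Big(\|\na\sqrt{n}\|_{L^2}^2+\|\na c\|_{H^1}^2+\|\na\bfu\|_{L^2}^2\Big)\le C\big(1+\me(t)\big).
\]
Gr\"onwall's lemma then yields $\me\in L^{\infty}(0,T)$ and the dissipation rates in $L^1(0,T)$, i.e. $n\in L^{\infty}(0,T;L\log L(\R^2))$, $\na c\in L^{\infty}(0,T;L^2)\cap L^2(0,T;H^1)$ and $\bfu\in L^{\infty}(0,T;L^2)\cap L^2(0,T;H^1)$.

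The core of the proof is then a bootstrap. From the $L\log L$-bound, the control of $\|\na\sqrt n\|_{L^2}$ in $L^2_t$, and the 2D Gagliardo-Nirenberg inequality, one gets $n\in L^2(0,T;L^2)$; testing the $n$-equation against powers of $n$, using that the effective drift $\bfu-\chi(c)\na c+\na\phi$ already lies in $L^2(0,T;L^q)$ for every finite $q$, improves this to $n\in L^{\infty}(0,T;L^q)\cap L^2(0,T;W^{1,q})$ for all finite $q$, and then, via Moser iteration or the Duhamel representation with the heat semigroup, to $n\in L^{\infty}(0,T;L^{\infty})$. Plugging $nf(c)+\bfu\cd\na c\in L^2(0,T;L^p)$ with some $p>2$ into maximal $L^q$-$L^p$ regularity for \eqref{EQ}$_2$ gives $\na c\in L^2(0,T;W^{1,p}(\R^2))\hookrightarrow L^2(0,T;L^{\infty}(\R^2))$, in particular $\na c\in L^4(0,T;L^4(\R^2))$; meanwhile the force $-n\na\phi+\chi(c)n\na c\in L^2(0,T;L^2)$ together with 2D Navier-Stokes strong-solution theory gives $\bfu\in L^{\infty}(0,T;H^1)\cap L^2(0,T;H^2)$. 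The admissible pair $r_3=s_3=4$ having been reached, Theorem \ref{blowup2} rules out a finite blow-up time, so the solution is global, and the stated regularity is obtained by the standard persistence argument for the local solution.

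I expect the main obstacle to be closing the entropy inequality in the presence of the strongly nonlinear self-consistent coupling: $\chi(c)n\na c$ occurs simultaneously inside the cell flux in \eqref{EQ}$_1$ and as a body force in \eqref{EQ}$_3$, generating cross terms such as $\int n\,\chi'(c)|\na c|^2$, $\int n\,\chi(c)\Del c$ and $\int\chi(c)n\,\na c\cd\bfu$ with no definite sign; organizing them, after integration by parts and use of \eqref{EQ}$_2$, into a quadratic form in $(\na\sqrt n,\na c,\na\bfu)$ that stays coercive is precisely where the smallness of $\|c_0\|_{L^{\infty}}$ --- which keeps $\|c\|_{L^{\infty}}$ and $\|f(c)\|_{L^{\infty}}$ small along the solution --- is used. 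A secondary point is the handling of the inertial term $\kappa(\bfu\cd\na)\bfu$ in the energy estimate and in the bootstrap, but in two dimensions this is routine via Ladyzhenskaya's inequality and needs no smallness.
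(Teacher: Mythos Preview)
Your overall strategy --- establish the entropy inequality, deduce $\na c\in L^4(0,T;L^4(\R^2))$, and invoke Theorem~\ref{blowup2} with $r_3=s_3=4$ --- is exactly the paper's. The gap is in how you propose to close the entropy inequality.

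The self-consistent forcing $\chi(c)n\na c$ in \eqref{EQ}$_3$ produces, in the $L^2$-energy for $\bfu$, the term $\int_{\R^2}\chi(c)n\,\na c\cdot\bfu$. In two dimensions this does \emph{not} organize into a coercive quadratic form in $(\na\sqrt n,\na^2 c,\na\bfu)$ using only the smallness of $\|c\|_{L^\infty}$: every H\"older/Gagliardo--Nirenberg splitting leaves a prefactor involving $\|\bfu\|_{L^2}$ or $\|\na c\|_{L^2}$ multiplying the full dissipation, and these quantities are not small a~priori. The paper circumvents this by proving Lemma~\ref{Lp-n-1} \emph{first}: one tests \eqref{EQ}$_1$ against $pn^{p-1}g(c)$ with the specific weight $g(c)=e^{(\beta c)^2}$ and uses \eqref{EQ}$_2$ simultaneously; the resulting cross terms are absorbed by the good term $\int n^p g''(c)|\na c|^2$ once $\|c_0\|_{L^\infty}$ is small, yielding $n\in L^\infty(0,T;L^p(\R^2))$ for every finite $p$ before any entropy argument. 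With $\|n\|_{L^2}$ and $\|n\|_{L^4}$ thus bounded, the problematic coupling is handled crudely as $\mathcal{C}_\chi\|\bfu\|_{L^4}\|n\|_{L^4}\|\na c\|_{L^2}$ and the entropy inequality (Lemma~\ref{entropy}) closes by Gr\"onwall. Your description (``organizing \dots\ into a quadratic form \dots\ that stays coercive'') misses this weighted-$L^p$ trick, which is where the smallness of $\|c_0\|_{L^\infty}$ is actually spent.

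Once the entropy inequality holds, your bootstrap is correct but superfluous. The paper passes directly from $\int_0^T\|\na^2 c\|_{L^2(\R^2)}^2<\infty$ to
\[
\int_0^T\|\na c\|_{L^4(\R^2)}^4\,dt \;\le\; C\|c_0\|_{L^\infty(\R^2)}^2\int_0^T\|\na^2 c\|_{L^2(\R^2)}^2\,dt \;<\;\infty
\]
via the single Gagliardo--Nirenberg line $\|\na c\|_{L^4(\R^2)}^2\le C\|\na^2 c\|_{L^2(\R^2)}\|c\|_{L^\infty(\R^2)}$; no Moser iteration, no maximal regularity for \eqref{EQ}$_2$, and no separate 2D Navier--Stokes strong-solution theory are needed.
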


Finally, for the classical solution $(n,c,\bfu)$ obtained in Theorem \ref{Stokes} and Theorem \ref{2D-NS}, we can establish some temporal decay estimates. For notational simplicity, we denote
\[
\om(x):=\left\{
\begin{split}
&|x|,  \qquad x\in\R^3, \\
&(1+|x|)(1+\ln(1+|x|)), \,\,\, x\in\R^2
\end{split}
\right.
\]
and
\begin{equation}\label{omega}
\quad \mathcal{M}_{\om\phi}:=\sup_{x\in\R^d}\big(|\om(x)|\,|\na\phi(x)|\big)^2.
\end{equation}

\begin{theorem}[Decay estimates]\label{decay}
 Suppose that all assumptions in  Theorem \ref{Stokes} and Theorem \ref{2D-NS} hold.  If additionally  $\mathcal{M}_{\om\phi}$ defined by \eqref{omega}  is suitably small,  then the solution $(n, c, \bfu)$ of system \eqref{EQ}-\eqref{initial} enjoys the following temporal decay: for any $1\leq p<\infty$, it holds that
\begin{align}\label{decay:n}
\|n(t)\|_{L^p(\R^d)}\leq C(\|n_0\|_{L^1(\R^d)\cap L^p(\R^d)},\|c_0\|_{L^{\infty}(\R^d)})\,(1+t)^{-\f{d}{2}(1-\f{1}{p})},
\end{align}
and
\begin{align}\label{decay:c}
\|c(t)\|_{L^p(\R^d)}\leq C(\|c_0\|_{L^1(\R^d)\cap L^p(\R^d)})\,(1+t)^{-\f{d}{2}(1-\f{1}{p})};
\end{align}
furthermore, for the signal concentration, it also holds that
\begin{align}\label{decay:nc}
\|c(t)\|_{L^{\infty}(\R^d)}  \leq C(\|c_0\|_{L^2(\R^d)\cap L^{\infty}(\R^d)})(1+t)^{-\f{d}{4}}.
\end{align}
\end{theorem}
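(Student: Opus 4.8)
The plan is to run a two-stage bootstrap. Stage~1 upgrades the a priori bounds of Theorems~\ref{Stokes} and \ref{2D-NS} from local-in-time to uniform-in-time; Stage~2 then extracts the algebraic decay rates from $L^p$-energy inequalities combined with Nash-type interpolation and a Moser iteration. Throughout, the transport terms are harmless in any $L^p$-estimate since $\na\cd\bfu=0$, and mass conservation $\|n(t)\|_{L^1(\R^d)}\equiv\|n_0\|_{L^1(\R^d)}=:M$ together with the maximum principle $\|c(t)\|_{L^\infty(\R^d)}\le\|c_0\|_{L^\infty(\R^d)}$ are available from the outset. In \emph{Stage~1} I would establish, under the smallness of $\|c_0\|_{L^\infty}$ and of $\mathcal{M}_{\om\phi}$,
\[
\sup_{t\ge0}\Big(\|n(t)\|_{L^p(\R^d)}+\|c(t)\|_{W^{1,\infty}(\R^d)}+\|\bfu(t)\|_{L^\infty(\R^d)}\Big)<\infty\quad(1\le p\le\infty),\qquad \int_0^\infty\|\na c(t)\|_{L^\infty(\R^d)}^2\,dt<\infty.
\]
Testing the $n$-equation against $n^{p-1}$ gives $\f1p\f{d}{dt}\|n\|_{L^p}^p+\f{4(p-1)}{p^2}\|\na n^{p/2}\|_{L^2}^2=\f{2(p-1)}{p}\int\chi(c)\,n^{p/2}\,\na c\cd\na n^{p/2}+\f{p-1}{p}\int(\Del\phi)\,n^p$; the chemotactic term is absorbed by Young's inequality into the dissipation plus $C\|\na c\|_{L^\infty}^2\|n\|_{L^p}^p$, while the potential term is controlled either by $\|\Del\phi\|_{L^\infty}$ (for the uniform bound) or, when decay is needed, by keeping it as $-\f{2(p-1)}{p}\int n^{p/2}\na\phi\cd\na n^{p/2}$ and using $|\na\phi|^2\le\mathcal{M}_{\om\phi}\,\om^{-2}$ together with a weighted Hardy-type inequality---the sharp Hardy inequality $\int|x|^{-2}g^2\le 4\|\na g\|_{L^2}^2$ in $d=3$ and its logarithmically weighted $d=2$ analogue, which is exactly why $\om(x)=(1+|x|)(1+\ln(1+|x|))$ carries the extra logarithm---so that smallness of $\mathcal{M}_{\om\phi}$ lets us absorb it into $\|\na n^{p/2}\|_{L^2}^2$. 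Combining these with the space-time dissipation integrability coming from the entropy functional inequality behind Theorems~\ref{Stokes}--\ref{2D-NS} (which also yields $\int_0^\infty\|\na c\|_{L^\infty}^2\,dt<\infty$, via $H^2\hookrightarrow L^\infty$ in $d\le3$), a Gr\"onwall argument closes the finite-$p$ bounds and a Moser/Alikakos iteration upgrades them to $p=\infty$; the bound on $\bfu$ follows from the (Navier-)Stokes equation with the now-controlled forcing $-n\na\phi+\chi(c)n\na c$.

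\emph{Decay of $c$.} Since $nf(c)\ge0$, testing the $c$-equation against $c^{p-1}$ gives $\f1p\f{d}{dt}\|c\|_{L^p}^p+\f{4(p-1)}{p^2}\|\na c^{p/2}\|_{L^2}^2\le0$. Feeding this the Nash inequality $\|g\|_{L^2}^{2+4/d}\le C_N\|\na g\|_{L^2}^2\|g\|_{L^1}^{4/d}$ with $g=c^{p/2}$, and inserting inductively the decay of $\|c\|_{L^{p/2}}$ (the induction started at $\|c(t)\|_{L^1}\le\|c_0\|_{L^1}$), produces an ODE of the form $y'\le-c\,(1+t)^{p-2}\,y^{1+2/d}$ with $y=\|c\|_{L^p}^p$, whose integration gives $\|c(t)\|_{L^p}\le C(1+t)^{-\f d2(1-1/p)}$ for $p=2,4,8,\dots$; interpolation with the $L^1$ bound then yields \eqref{decay:c} for all $1\le p<\infty$. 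Estimate \eqref{decay:nc} follows by a parabolic local-boundedness (Moser) estimate for the drift-diffusion inequality $\pa_t c\le\Del c-\bfu\cd\na c$ on unit time-intervals, giving $\|c(t)\|_{L^\infty}\ls\sup_{s\in[t-1,t]}\|c(s)\|_{L^2}\ls(1+t)^{-d/4}$ for $t\ge1$, using the uniform bound on $\bfu$ from Stage~1.

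\emph{Decay of $n$.} At $p=2$ the energy inequality, after absorbing the potential term via Hardy and the smallness of $\mathcal{M}_{\om\phi}$ and the chemotactic term by Young, reads $\f{d}{dt}\|n\|_{L^2}^2+\|\na n\|_{L^2}^2\le C\|\na c\|_{L^\infty}^2\|n\|_{L^2}^2$. Since $\int_0^\infty\|\na c\|_{L^\infty}^2\,dt<\infty$, setting $z(t)=\|n(t)\|_{L^2}^2\exp\!\big(-C\!\int_0^t\|\na c(s)\|_{L^\infty}^2\,ds\big)$ and applying Nash in the form $\|\na n\|_{L^2}^2\gs M^{-4/d}\|n\|_{L^2}^{2(1+2/d)}$ yields $z'\le-c'z^{1+2/d}$, hence $\|n(t)\|_{L^2}\ls(1+t)^{-d/4}=(1+t)^{-\f d2(1-1/2)}$. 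With this $L^2$ decay as a seed (together with the $L^\infty$ decay of $c$ and the uniform bounds already in hand) I would propagate to all $p$ by a Moser iteration on the $L^p$-energy inequalities: at each step the chemotactic and potential contributions are lower order relative to the Nash-boosted dissipation $\|\na n^{p/2}\|_{L^2}^2\gs\|n\|_{L^p}^{p(1+2/d)}\|n\|_{L^{p/2}}^{-2p/d}$ and therefore do not degrade the rate, so the seed propagates to $\|n(t)\|_{L^p}\ls(1+t)^{-\f d2(1-1/p)}$ for every $1\le p<\infty$, which is \eqref{decay:n}. (An alternative to the Moser step is a time-weighted bootstrap of the Duhamel formula for $n$ using the $L^2$--$L^p$ heat smoothing and the uniform bound on $\bfu$.)

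\emph{Principal obstacle.} The crux is the potential drift $\na\cd(n\na\phi)$: since $\na\phi$ is a fixed, merely bounded function, $n\na\phi$ decays no faster than $n$ itself and cannot be treated as a genuine perturbation in a na\"ive Duhamel scheme. The remedy---and the reason for the weighted smallness hypothesis on $\mathcal{M}_{\om\phi}$ in \eqref{omega}---is to keep this term inside the $L^p$-energy identity and absorb it into the Dirichlet dissipation through a Hardy-type inequality adapted to $\om$; the borderline character of the two-dimensional Hardy inequality is precisely what forces the logarithmic factor in $\om$ when $d=2$. A secondary difficulty is the circular dependence among the decays of $n$, $c$, $\bfu$ and among the various norms; this is broken by first securing the genuinely uniform-in-time bounds of Stage~1, after which the chemotactic and transport couplings enter only through factors already known to decay, multiplied by uniformly bounded coefficients.
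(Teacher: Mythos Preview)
Your treatment of the $c$-decay is fine and essentially equivalent to the paper's (the paper uses a time-weighted $L^p$ energy plus Gagliardo--Nirenberg rather than Nash $+$ doubling, and for $L^\infty$ it runs a De~Giorgi/Stampacchia level-set argument on $(c-\tau(1+t)^{-d/4})_+$ rather than a Moser iteration; both are standard). Your identification of the Hardy mechanism for the potential drift, and of the reason for the logarithmic weight in $d=2$, is exactly right.

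The genuine gap is in your $n$-decay. Your whole scheme for $n$ hinges on Stage~1 delivering
\[
\int_0^\infty\|\na c(t)\|_{L^\infty(\R^d)}^2\,dt<\infty,
\]
which you claim follows from the entropy functional via ``$H^2\hookrightarrow L^\infty$''. This fails on two counts. First, the entropy inequality in the paper (Lemma~\ref{entropy}) only gives $\int_0^T\|\na^2 c\|_{L^2}^2\le C(T)$ with a $T$-dependent constant; nothing in Theorems~\ref{Stokes}--\ref{2D-NS} makes this uniform on $[0,\infty)$, and you do not indicate how you would do so. Second, and more seriously, even a uniform bound $c\in L^2_tH^2_x$ does \emph{not} yield $\na c\in L^2_tL^\infty_x$: in $d=3$ one has $H^2\hookrightarrow W^{1,6}$ but not $W^{1,\infty}$, and in $d=2$ the embedding $H^2\hookrightarrow W^{1,\infty}$ likewise fails at the endpoint. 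So the coefficient $\|\na c\|_{L^\infty}^2$ in your Gr\"onwall factor is simply not available, and the argument for $\|n\|_{L^2}$ (and hence for all $p$) does not close. The same issue undermines your Stage~1 claim $\sup_t\|\bfu\|_{L^\infty}<\infty$, which you invoke for the Moser step on $c$ (though in fact the divergence-free transport drops out of every $L^p$ test, so that bound is not actually needed there).

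The paper's remedy is to avoid pointwise control of $\na c$ altogether by testing the $n$-equation against $n^{p-1}g(c)$ with $g(c)=e^{(\beta c)^2}$ and simultaneously using the $c$-equation. The point is that the chemotactic contributions produce terms of the form $\int n^p|\na c|^2\times(\cdots)$, while the Laplacian in the $c$-equation, after multiplying by $n^pg'(c)$ and integrating by parts, produces a \emph{good} term $\int n^p g''(c)|\na c|^2$. Choosing $\beta$ large (using smallness of $\|c_0\|_{L^\infty}$) makes $g''$ dominate the bad coefficients, so the $|\na c|^2$ terms cancel internally without any $L^\infty$ bound on $\na c$. The potential terms are then handled exactly as you propose, via Hardy and smallness of $\mathcal{M}_{\om\phi}$, yielding the clean differential inequality
\[
\f{d}{dt}\int_{\R^d} n^p g(c)+\f{p(p-1)}{2}\int_{\R^d} n^{p-2}g(c)|\na n|^2\le 0,
\]
from which the rate follows by the same time-weighted Gagliardo--Nirenberg argument as for $c$. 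This coupling trick (already used in Lemma~\ref{Lp-n-1}) is the missing idea in your proposal.
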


\subsection{Main ideas and structure of the paper} 

We will first establish the extensibility criteria (Theorem \ref{blowup1} and Theorem \ref{blowup2}) in Section \ref{sec:blowup}. Here we would like to remark  that if we specially choose $s_1=2$, $r_1=\infty$ and $s_2=q$, $r_2=p$ with $\f{3}{p}+\f{2}{q}=1$, $3<p\leq\infty$ in Theorem \ref{blowup1}, and choose $s_3=2$, $r_3=\infty$ in Theorem \ref{blowup2},  we can cover the corresponding extensibility criteria obtained in \cite[Theorem 1.2]{Chae13DCDS} for system \eqref{EQ-Tuval}. In comparison, our extensibility  criteria \eqref{blow-1}-\eqref{blow-2} in Theorem \ref{blowup1} and Theorem \ref{blowup2} are relatively easier to be verified. Indeed, the bounds   
\[
\int_0^{T^*}\|\na c\|_{L^5(\R^3)}^5 + \|\bfu\|_{L^5(\R^3)}^5 dt<\infty
\]
in three dimensional case and 
\[
\int_0^{T^*}\|\na c\|_{L^4(\R^2)}^4dt<\infty
\]
in two dimensional setting are enough to extend the local solutions to global ones and to establish the global existence of classical solutions (Theorem \ref{Stokes} and Theorem \ref{2D-NS}) with the help of  the entropy functional inequality 
\begin{align*}
& \int_{\R^d} n|\ln n|   +  \|\na c\|_{L^2(\R^d)}^2  +  \|\bfu\|_{L^2(\R^d)}^2   \nn \\
&\quad +  \f12\int_0^t\Big(\int_{\R^d}\f{|\na n|^2}{n}  + \|\na^2 c\|_{L^2(\R^d)}^2   +  \|\na\bfu\|_{L^2(\R^d)}^2 \Big) ds
\leq C, 
\end{align*} 
which will be exhibited in Section \ref{globalexist}.  The key to achieve this relies on deriving an entropy functional inequality. As is showing in the equation \eqref{EQ}$_3$, the appearance of the coupling term $\chi(c)n\na c$ leads to the more stronger nonlinearity, making it difficult to close the entropy estimate. To overcome this difficulty, we will  increase the integrability of $n$ from $L^1(\R^d)$ to $L^p(\R^d)$ for $p\in[1,\infty)$ under suitable smallness assumption on $\|c_0\|_{L^{\infty}(\R^d)}$.  We will next give the explicit decay rates for the regular solutions (Theorem \ref{decay}) in Section \ref{decayestimate} by suitably restricting the growth of the potential $\na\phi$ at infinity. This extra assumption will remove the obstacle arising from  the lack of inform-in-time estimate of $\|\na c\|_{H^1(\R^d)}$  by introducing a weighted function $g=e^{(\beta c)^2}$ with $\beta>0$. Finally, we will give a sketch for the proof of global existence of weak solutions to the three dimensional chemotaxis-Navier-Stokes system in Section \ref{appendix}.

\subsection{Notation} We will set  $\pa_t=\f{\pa}{\pa t}$ and  $\pa_i=\f{\pa}{\pa x_i}$ for $i=1,2, \cdots, d$,  and denote all the partial derivatives $\pa^{\alpha}$ with multi-index $\alpha$ satisfying $|\alpha|=k$ by  $\na^k$ ($k\geq0$). Let  $C=C(\alpha, \beta, \cdots)$ be a generic positive constant depending only on $\alpha,\beta,\cdots$ but not on  $\kappa$.

\vspace{2mm}

\section{Preliminaries}\label{sec:rel}

In this section, we would like to present some preliminaries. We begin with recalling the well-known tame estimate for the product of two functions and  Moser estimate.

\begin{lemma}[Corollary 2.54 in \cite{Bahouri}]\label{leibniz}
  Let $k\in\mathbb{N}$. Then for any functions $f$, $g\in(H^k\cap L^{\infty})(\R^d)$,  there exists a positive constant $C=C(k,d)$ such that
  \[
  \big\|\nabla^k(fg) \big\|_{L^2(\R^d)}
  \le  C\Big(\|f\|_{L^{\infty}(\R^d)} \big\|\nabla^k g \big\|_{L^2(\R^d)}   + \big\|\nabla^k f \big\|_{L^2(\R^d)} \|g\|_{L^{\infty}(\R^d)}\Big).
 \]
  Furthermore, if $\na f\in L^{\infty}(\R^d)$, it holds that
  \[
  \big\|\na^k(fg) - f\na^k g \big\|_{L^2(\R^d)}
  \le  C\Big(\|\na f\|_{L^{\infty}(\R^d)}  \big\|\na^{k-1}g  \big\|_{L^2(\R^d)}
     + \big\|\na^k f \big\|_{L^2(\R^d)}\|g\|_{L^{\infty}(\R^d)}\Big).
  \]
  \end{lemma}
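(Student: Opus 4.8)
The statement is quoted verbatim as Corollary~2.54 in \cite{Bahouri}, so in practice one would simply invoke it; for the record, here is the plan one would follow to prove it from scratch, combining the Leibniz rule with the Gagliardo--Nirenberg interpolation inequalities on $\R^d$.

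For the first (tame) estimate, I would expand by Leibniz, writing schematically
\[
\na^k(fg) = \sum_{j=0}^{k} c_{k,j}\,\na^j f\,\na^{k-j}g,
\]
where the symbol $\na^j f\,\na^{k-j}g$ stands for an arbitrary product of a $j$-th order derivative of $f$ with a $(k-j)$-th order derivative of $g$, and $c_{k,j}$ is the corresponding combinatorial constant. The endpoint terms $j=0$ and $j=k$ are controlled at once by $\|f\|_{L^{\infty}(\R^d)}\|\na^k g\|_{L^2(\R^d)}$ and $\|\na^k f\|_{L^2(\R^d)}\|g\|_{L^{\infty}(\R^d)}$. For $1\le j\le k-1$ the plan is to apply H\"older's inequality with $\f12=\f{j}{2k}+\f{k-j}{2k}$,
\[
\big\|\na^j f\,\na^{k-j}g\big\|_{L^2(\R^d)} \le \big\|\na^j f\big\|_{L^{2k/j}(\R^d)}\big\|\na^{k-j}g\big\|_{L^{2k/(k-j)}(\R^d)},
\]
and then the Gagliardo--Nirenberg inequalities $\|\na^j f\|_{L^{2k/j}(\R^d)} \le C\|f\|_{L^{\infty}(\R^d)}^{1-j/k}\|\na^k f\|_{L^2(\R^d)}^{j/k}$ and $\|\na^{k-j}g\|_{L^{2k/(k-j)}(\R^d)} \le C\|g\|_{L^{\infty}(\R^d)}^{j/k}\|\na^k g\|_{L^2(\R^d)}^{1-j/k}$. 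Multiplying these, the product takes the form $P^{1-\theta}Q^{\theta}$ with $\theta=j/k$, $P=\|f\|_{L^{\infty}(\R^d)}\|\na^k g\|_{L^2(\R^d)}$ and $Q=\|\na^k f\|_{L^2(\R^d)}\|g\|_{L^{\infty}(\R^d)}$, so Young's inequality gives $P^{1-\theta}Q^{\theta}\le P+Q$; summing over $j$ closes the first inequality.

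For the commutator estimate I would observe that $\na^k(fg)-f\na^k g=\sum_{j=1}^{k}c_{k,j}\,\na^j f\,\na^{k-j}g$ retains only the summands in which at least one derivative hits $f$; writing $\na^j f=\na^{j-1}(\na f)$ exhibits each such summand as exactly a term of the type already estimated, now with $k$ replaced by $k-1$ and $f$ replaced by $\na f$. (The combinatorial constants differ from those in the Leibniz expansion of $\na^{k-1}\big((\na f)g\big)$, but since they are $O(1)$ this is harmless.) Applying the first estimate in this reduced form and using $\na^{k-1}(\na f)=\na^k f$ then yields
\[
\big\|\na^k(fg)-f\na^k g\big\|_{L^2(\R^d)}\le C\Big(\|\na f\|_{L^{\infty}(\R^d)}\big\|\na^{k-1}g\big\|_{L^2(\R^d)}+\big\|\na^k f\big\|_{L^2(\R^d)}\|g\|_{L^{\infty}(\R^d)}\Big),
\]
with the case $k=1$ being trivial since then the difference is just $(\na f)g$. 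I expect no serious analytic obstacle: the only point requiring care is the bookkeeping of exponents, i.e.\ choosing the H\"older and interpolation exponents so that the powers of $\|\na^k f\|_{L^2}$ and $\|\na^k g\|_{L^2}$ add up to one and Young's inequality produces exactly the two advertised terms with no leftover intermediate-order norms (one should also keep in mind the borderline Gagliardo--Nirenberg cases, e.g.\ $d=2k$, which are nonetheless covered by the version in \cite{Bahouri}).
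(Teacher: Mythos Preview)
Your proposal is correct and, as you note, the paper itself gives no proof of this lemma: it is simply quoted from \cite{Bahouri} as a preliminary tool, with no argument supplied. The sketch you provide---Leibniz expansion, H\"older with exponents $2k/j$ and $2k/(k-j)$, Gagliardo--Nirenberg to interpolate, then Young---is exactly the standard route (and indeed the one underlying the result in \cite{Bahouri}), and your reduction of the commutator estimate to the tame estimate with $(k,f)$ replaced by $(k-1,\na f)$ is clean and correct.
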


 \begin{lemma}[Theorem 2.61 in \cite{Bahouri}]\label{moser}
 Let $k\in\mathbb{N}$, $p\in[1,\infty]$ and $f\in C^k(\R^d)$. Then there exists a positive constant $C=C(k, p,f)$ such that
 \[
  \big\|\na^k f(\omega) \big\|_{L^p(\R^d)}  \le  C\|\omega\|_{L^{\infty}(\R^d)}^{k-1} \big\|\na^k\omega \big\|_{L^p(\R^d)}
 \]
  for all $\omega\in(W^{k,p}\cap L^{\infty})(\R^d)$.
\end{lemma}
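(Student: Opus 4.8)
The plan is to reduce this composition estimate to a product estimate through the Faà di Bruno (higher chain rule) formula, and then to control each resulting product of derivatives by Gagliardo--Nirenberg interpolation measured against the $L^{\infty}(\R^d)$ norm. This is exactly the classical Moser-calculus argument, so the proposal below is a recipe for reconstructing Theorem~2.61 of \cite{Bahouri} in the scalar form needed here (with $\omega$ real-valued and $f$ a $C^k$ function of one variable, as in the applications where $f$ is either $\chi$ or the consumption rate).

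First I would expand $\na^k f(\omega)$ by the chain rule. Differentiating the composition $k$ times produces a \emph{finite} sum
\[
\na^k f(\omega) = \sum_{j=1}^{k} f^{(j)}(\omega)\sum_{\substack{\ell_1+\cdots+\ell_j=k\\ \ell_i\ge1}} c_{\ell_1,\dots,\ell_j}\,\na^{\ell_1}\omega\otimes\cdots\otimes\na^{\ell_j}\omega,
\]
where the $c_{\ell_1,\dots,\ell_j}$ are universal combinatorial constants, and the inner sum is finite because each $\ell_i\ge1$ and $\sum_i\ell_i=k$. Since $f\in C^k$ and $\omega\in L^{\infty}(\R^d)$, the scalar prefactors are bounded, $|f^{(j)}(\omega)|\le \sup_{|s|\le\|\omega\|_{L^{\infty}(\R^d)}}|f^{(j)}(s)|=:M_j<\infty$; this is where the dependence of the constant on $f$ (and, through $M_j$, on the range of $\omega$) enters. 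It therefore suffices to bound, for each admissible multi-exponent, the $L^p(\R^d)$ norm of the product $\na^{\ell_1}\omega\cdots\na^{\ell_j}\omega$.

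The heart of the argument is a Hölder--Gagliardo--Nirenberg estimate for these products. For each factor I would set $q_i:=kp/\ell_i$, so that $\sum_{i=1}^{j}q_i^{-1}=(kp)^{-1}\sum_i\ell_i=p^{-1}$, and apply Hölder's inequality to reduce to the single-factor bounds $\|\na^{\ell_i}\omega\|_{L^{q_i}(\R^d)}$. Each such bound follows from the Gagliardo--Nirenberg inequality
\[
\|\na^{\ell_i}\omega\|_{L^{q_i}(\R^d)} \ls \|\na^k\omega\|_{L^p(\R^d)}^{\ell_i/k}\,\|\omega\|_{L^{\infty}(\R^d)}^{\,1-\ell_i/k},
\]
which is available precisely because the scaling relation forces $q_i^{-1}=\ell_i/(kp)$ and the interpolation parameter $\ell_i/k$ lies in $[0,1]$; the endpoint $p=\infty$ is handled by the corresponding $L^{\infty}$--$L^{\infty}$ version. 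Multiplying the $j$ factors and using $\sum_i \ell_i/k=1$ together with $\sum_i(1-\ell_i/k)=j-1$ collapses the product to $C\,\|\na^k\omega\|_{L^p(\R^d)}\,\|\omega\|_{L^{\infty}(\R^d)}^{\,j-1}$, and summing against the bound $M_j$ over the finitely many multi-exponents yields the claim.

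The step I expect to be the main obstacle is the exponent bookkeeping in this Gagliardo--Nirenberg/Hölder step: one must check that the homogeneity exponent of each interpolation inequality is consistent with the choice $q_i=kp/\ell_i$ and that these are Hölder-conjugate, \emph{uniformly} over all decompositions $\ell_1+\cdots+\ell_j=k$ and all $1\le j\le k$, with separate attention to the endpoint $p=\infty$. A secondary point worth recording is that the lower-order terms ($j<k$) carry the smaller powers $\|\omega\|_{L^{\infty}(\R^d)}^{\,j-1}$, while only the top term $j=k$ (the contribution of $f^{(k)}(\omega)\,(\na\omega)^{\otimes k}$) produces exactly the prefactor $\|\omega\|_{L^{\infty}(\R^d)}^{\,k-1}$ displayed in the statement; on any bounded range of $\|\omega\|_{L^{\infty}(\R^d)}$ — which is all that is used in our applications, where $\|c\|_{L^{\infty}(\R^d)}$ is controlled — these lower powers are absorbed into the constant, so the single prefactor in the lemma is to be read with $C=C(k,p,f)$ allowed to depend on such a bound.
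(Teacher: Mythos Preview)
The paper does not prove this lemma at all: it is simply quoted as Theorem~2.61 of \cite{Bahouri} and used as a black box, so there is no ``paper's own proof'' to compare against. Your reconstruction via Fa\`a di Bruno plus the H\"older/Gagliardo--Nirenberg splitting with exponents $q_i=kp/\ell_i$ is exactly the standard Moser-calculus argument underlying that theorem, and the computation is correct (in particular $\sum_i q_i^{-1}=p^{-1}$ and the interpolation parameter $\ell_i/k$ matches the scaling).

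Your closing remark is also the right caveat: as written the lemma's constant $C=C(k,p,f)$ hides a dependence on $\|\omega\|_{L^{\infty}(\R^d)}$, both through $M_j=\sup_{|s|\le\|\omega\|_{L^\infty}}|f^{(j)}(s)|$ and through the lower powers $\|\omega\|_{L^{\infty}(\R^d)}^{\,j-1}$ for $j<k$. In the paper this is harmless because every application takes $\omega=c$ with $\|c\|_{L^{\infty}}\le c_\infty$, so the constant is genuinely uniform along the solution; you have identified this correctly.
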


We now state the local-in-time existence of classical solutions to the Cauchy problem \eqref{EQ}-\eqref{initial} and give a sketch for its proof.

\begin{lemma}[Local well-posedness]\label{local}

Let the assumptions $(\bf{A})$, $(\bf{B})$ and $(\bf{C})$ hold. Suppose that  $\kappa\in\R$, $\chi$, $f\in C^m\big([0, +\infty)\big)$ and  $\phi\in W^{m,\infty}(\R^d)$ with $d=2,3$ and $m\geq3$. If the initial data $(n_0, c_0, \bfu_0)\in H^{m-1}(\R^d)\times H^m(\R^d)\times H^m(\R^d;\R^d)$, then there exist $T^* \in(0,+\infty]$ and a unique triple $(n, c, \bfu)$ fulfilling that for any $t<T^*$,
\[
(n,c, \bfu)\in L^{\infty}\big(0, t; H^{m-1}(\R^d)\times H^m(\R^d)\times H^m(\R^d;\R^d)\big)
\]
and
\[
(\na n,\na c, \na\bfu)\in L^2\big(0,t; H^{m-1}(\R^d)\times H^m(\R^d)\times H^m(\R^d;\R^d)\big),
\]
and solving system \eqref{EQ}-\eqref{initial}.
\end{lemma}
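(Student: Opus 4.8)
The plan is to use a standard fixed-point / successive-approximation scheme in the high-regularity energy space, treating the chemotactic and potential terms as perturbations of the linear parabolic (heat/Stokes) evolution. First I would set up an iteration: given $(n^k, c^k, \bfu^k)$, define $(n^{k+1}, c^{k+1}, \bfu^{k+1})$ as the solution of the linearized system
\begin{equation*}
\left\{
\begin{split}
\pa_t n^{k+1} + \bfu^k\cd\na n^{k+1} &= \Del n^{k+1} - \na\cd\big(\chi(c^k)n^k\na c^k\big) + \na\cd(n^k\na\phi),\\
\pa_t c^{k+1} + \bfu^k\cd\na c^{k+1} &= \Del c^{k+1} - n^k f(c^k),\\
\pa_t\bfu^{k+1} + \kappa(\bfu^k\cd\na)\bfu^{k+1} + \na P^{k+1} &= \Del\bfu^{k+1} - n^k\na\phi + \chi(c^k)n^k\na c^k,\\
\na\cd\bfu^{k+1} &= 0,
\end{split}
\right.
\end{equation*}
with the common initial data $(n_0,c_0,\bfu_0)$. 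Each equation is a linear advection–diffusion (or Stokes) problem with given coefficients, so the iterates are well-defined in the claimed regularity class by classical linear theory; for the fluid one applies the Leray projection $\bbp$ to eliminate the pressure.

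Next I would derive \emph{uniform} a priori estimates on a time interval $[0,T^*]$ independent of $k$. Applying $\na^j$ for $0\le j\le m$ to the $c$- and $\bfu$-equations (and $0\le j\le m-1$ to the $n$-equation), multiplying by the corresponding derivative, and integrating by parts, one obtains an energy inequality of the form
\[
\f{d}{dt}\,\mf^{k+1}(t) + \mathcal D^{k+1}(t) \le C\,\mathcal{P}\big(\mf^k(t)\big)\big(1+\mf^{k+1}(t)\big),
\]
where $\mf = \|n\|_{H^{m-1}}^2 + \|c\|_{H^m}^2 + \|\bfu\|_{H^m}^2$, $\mathcal D$ collects the parabolic dissipation $\|\na n\|_{H^{m-1}}^2 + \|\na c\|_{H^m}^2 + \|\na\bfu\|_{H^m}^2$, and $\mathcal P$ is a polynomial. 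The nonlinear right-hand side is handled by the tame product estimate (Lemma \ref{leibniz}) — to distribute derivatives on $\na\cd(\chi(c^k)n^k\na c^k)$ and $\chi(c^k)n^k\na c^k$ — together with the Moser composition estimate (Lemma \ref{moser}) applied to $\chi(c^k)$ and $f(c^k)$, using $C^m$ regularity of $\chi,f$ and the embedding $H^{m-1}(\R^d)\hookrightarrow L^\infty(\R^d)$ valid for $m\ge 3$, $d\le 3$. The transport terms $\bfu^k\cd\na(\cdot)$ contribute commutator terms controlled again by Lemma \ref{leibniz}. A continuity/bootstrap argument then shows there is $T^*>0$, depending only on the size of the initial data, such that $\sup_{[0,T^*]}\mf^k(t) + \int_0^{T^*}\mathcal D^k \le R$ for all $k$.

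Then I would prove the scheme is contractive in a lower-order norm: setting $\delta n = n^{k+1}-n^k$, etc., the differences solve a linearized system with source terms that are at least quadratic in the iterates, so a standard estimate in $L^\infty(0,T;L^2)\cap L^2(0,T;H^1)$ (using the uniform bound $R$ and shrinking $T^*$ if necessary) yields
\[
\sup_{[0,T^*]}\|(\delta n,\delta c,\delta\bfu)^{k+1}\|_{L^2}^2 \le \tfrac12\sup_{[0,T^*]}\|(\delta n,\delta c,\delta\bfu)^{k}\|_{L^2}^2.
\]
Hence $(n^k,c^k,\bfu^k)$ converges strongly in the low-norm space; interpolating with the uniform high-norm bound gives convergence in all intermediate norms, and the limit solves \eqref{EQ}-\eqref{initial} with the stated regularity. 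Uniqueness follows from the same difference estimate applied to two solutions. Finally, nonnegativity of $n$ and $c$ is preserved: since $f(0)=0$, the $c$-equation is of the form $\pa_t c + \bfu\cd\na c - \Del c = c\,(-n f(c)/c)$ with bounded (sign-indefinite but bounded) zeroth-order coefficient once $c\ge 0$ is known near the boundary of the positivity set, so a maximum-principle / Stampacchia truncation argument gives $c\ge0$; with $c\ge0$ in hand the $n$-equation is a linear Fokker–Planck equation $\pa_t n + \bfu\cd\na n = \Del n - \na\cd(n\,\bfw)$ with $\bfw = \chi(c)\na c - \na\phi \in L^\infty_t L^\infty_x$, whose solution with $n_0\ge0$ stays nonnegative.

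The main obstacle is the uniform high-order a priori estimate for the $n$-equation: because $n$ is only controlled in $H^{m-1}$ (one derivative less than $c$ and $\bfu$), the cross-diffusion source $\na\cdot(\chi(c^k)n^k\na c^k)$ costs effectively two derivatives on $c^k$ and one on $n^k$, so one must carefully split $\na^{m-1}$ of this term via Lemma \ref{leibniz} so that the top-order factor $\na^m c^k$ is paired with a low-order factor of $n^k$ (absorbed by $H^{m-1}\hookrightarrow L^\infty$) and the genuinely top-order piece $\na^{m-1}(\chi(c^k)\na c^k)\cdot\na n^k$ is absorbed into the dissipation $\int |\na^{m-1}\na n^{k+1}|^2$ via Young's inequality — crucially using that the diffusion in \eqref{EQ}$_1$ is the \emph{full} Laplacian. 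The new chemotactic forcing $\chi(c)n\na c$ in the fluid equation is estimated in exactly the same way and, being at the level of a source term for a parabolic (or Stokes) equation rather than a flux, is in fact easier. I do not expect the passage $\kappa\ne0$ vs. $\kappa=0$ to matter here, since on the short time interval $T^*$ the Navier-Stokes nonlinearity is treated perturbatively just like the transport terms.
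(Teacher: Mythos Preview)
Your proposal is correct and follows essentially the same approach as the paper: the paper's proof is a one-paragraph sketch that refers to \cite{Duan2010CPDE} and sets up precisely this successive-approximation scheme, so your outline (uniform $H^{m-1}\times H^m\times H^m$ energy estimates via Lemmas~\ref{leibniz}--\ref{moser}, contraction in a low norm, interpolation, uniqueness) is exactly what is intended. The only cosmetic difference is that in the paper's iteration the chemotaxis flux in the $n$-equation is written as $\na\cd\big(\chi(c^k)n^{k+1}\na c^k\big)$ rather than $\na\cd\big(\chi(c^k)n^{k}\na c^k\big)$, but either linearization closes in the same way.
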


\begin{proof}
This lemma can be shown by following the proof of Lemma 2.2 in \cite{Duan2010CPDE}.  Here we just mention that we can construct the approximate  solution sequence $(n^j, c^j, \bfu^j)_{j\geq0}$ by iteratively solving the  linear  Cauchy problems
\begin{equation*}
\left\{
\begin{split}
\pa_t n^{j+1}  +  \bfu^{j}\cd\na n^{j+1} & =  \Del n^{j+1}  -  \na\cd\left(\chi(c^{j})n^{j+1}\na c^{j}\right)  +  \na\cd(n^{j}\na\phi),   \\
\pa_t c^{j+1}  +  \bfu^{j}\cd\na c^{j+1}  &=  \Del c^{j+1}  -  n^{j}f(c^{j}),  \\
\pa_t\bfu^{j+1} + \kappa(\bfu^{j}\cd\na)\bfu^{j+1}  + \na P^{j+1} & =  \Del\bfu^{j+1} - n^{j}\na\phi  +  \chi(c^{j})n^{j}\na c^{j},  \\
\na\cd\bfu^{j+1} &= 0  \\
\big(n^{j+1}(x,0),c^{j+1}(x,0),\bfu^{j+1}(x,0)\big)&=\big(n_0(x),c_0(x),\bfu_0(x)\big)
\end{split}
\right.
\end{equation*}  
with the first iterative step $\big(n^0(x,t),c^0(x,t),\bfu^0(x,t)\big)=\big(n_0(x),c_0(x),\bfu_0(x)\big)$.
\end{proof}

Then the following lemma shows several basic properties of solutions to system \eqref{EQ}-\eqref{initial}, e.g., the conservation of mass and the maximum principle.
\begin{lemma}\label{mass}
Suppose that the assumptions in Lemma \ref{local} hold. Then the solution $(n, c, \bfu)$ of system \eqref{EQ}-\eqref{initial} satisfies
\[
n(x,t)\geq0, \quad c(x,t)\geq0 \qquad \textrm{a.e.} \,\,\, \mathrm{in} \quad  \R^d\times[0, +\infty),
\]
\[
\|n(t)\|_{L^1(\R^d)}=\|n_0\|_{L^1(\R^d)} \qquad \mathrm{for\,\,\, any }\quad  t\geq0
\]
and
\[
\sup_{t\geq0}\|c(t)\|_{L^{p}(\R^d)} \leq \|c_0\|_{L^{p}(\R^d)} \qquad \mathrm{for\,\,\, any }\quad t\geq0 \,\,\,\,\mathrm{and} \,\,\,\, 1\le p \le \infty.
\]
\end{lemma}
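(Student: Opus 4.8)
The plan is to treat \eqref{EQ}$_1$ and \eqref{EQ}$_2$ as scalar (quasi-)linear parabolic equations whose lower-order coefficients are assembled from the classical solution itself. By Lemma \ref{local} we have $(n,c,\bfu)\in L^\infty\big(0,T;H^{m-1}\times H^m\times H^m\big)$ for every $T<T^*$, and since $m\ge3$ and $d\le3$ the embeddings $H^{m-1}(\R^d)\hookrightarrow L^\infty(\R^d)$ and $H^m(\R^d)\hookrightarrow W^{1,\infty}(\R^d)$, together with assumption $(\mathbf B)$, make $n$, $c$, $\na c$, $\chi(c)$, $\chi'(c)$, $f(c)$ and $\na\phi$ all bounded on $\R^d\times[0,T]$. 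Every estimate below is carried out formally first and then justified on $\R^d$ by a cut-off argument: fix $\zeta_R\in C_c^\infty(\R^d)$ with $\zeta_R\equiv1$ on $B_R$, $\mathrm{supp}\,\zeta_R\subset B_{2R}$, $|\na\zeta_R|\ls R^{-1}$, $|\Del\zeta_R|\ls R^{-2}$, and let $R\to\infty$ at the end.

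\emph{Nonnegativity.} For $c$, I would extend $f$ to a $C^m$ function on $\R$ with $f(0)=0$ and factor $f(s)=s\,g(s)$, $g(s):=\int_0^1 f'(\theta s)\,d\theta$ continuous, so that $\|g(c)\|_{L^\infty(\R^d\times[0,T])}<\infty$. Differentiating $\f12\|c_-\|_{L^2}^2$ with $c_-:=-\min\{c,0\}$ and invoking \eqref{EQ}$_2$: the transport term drops since $\na\cd\bfu=0$, the diffusion term contributes $-\|\na c_-\|_{L^2}^2\le0$, and the reaction term equals $-\int_{\R^d} n\,g(c)\,c_-^2\le C\|c_-\|_{L^2}^2$; as $c_-(\cdot,0)=0$, Grönwall forces $c_-\equiv0$. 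For $n$, differentiating $\f12\|n_-\|_{L^2}^2$ with $n_-:=-\min\{n,0\}$ and using \eqref{EQ}$_1$: transport drops and diffusion gives $-\|\na n_-\|_{L^2}^2$, while integrating by parts turns the chemotactic and potential fluxes into $\f12\int\chi(c)\na c\cd\na(n_-^2)$ and $-\f12\int\na\phi\cd\na(n_-^2)$, each of which is $\le C\|n_-\|_{L^2}\|\na n_-\|_{L^2}\le\f14\|\na n_-\|_{L^2}^2+C\|n_-\|_{L^2}^2$; absorbing the gradient part into the diffusion yields $\f{d}{dt}\|n_-\|_{L^2}^2\le C\|n_-\|_{L^2}^2$, whence $n_-\equiv0$. (Alternatively, both facts follow from the maximum principle for the scalar linear parabolic equations satisfied by $n$ and $c$, of which $n\equiv0$ and $c\equiv0$ are solutions.)

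\emph{Mass conservation and the $L^p$-bound.} Multiplying \eqref{EQ}$_1$ by $\zeta_R$, using $\bfu\cd\na n=\na\cd(n\bfu)$, and integrating by parts gives
\[
\f{d}{dt}\int_{\R^d} n\zeta_R = \int_{\R^d} n\,\Del\zeta_R + \int_{\R^d} n\big(\chi(c)\na c-\na\phi-\bfu\big)\cd\na\zeta_R .
\]
Here $\big|\int n\,\Del\zeta_R\big|\ls R^{d/2-2}\|n\|_{L^2}\to0$, the contributions of $\chi(c)n\na c$ and $n\bfu$ are $\ls R^{-1}\|n\|_{L^2}\big(\|\na c\|_{L^2}+\|\bfu\|_{L^2}\big)\to0$, and $\big|\int n\,\na\phi\cd\na\zeta_R\big|\ls R^{-1}\|\na\phi\|_{L^\infty}\!\int_{B_{2R}\setminus B_R} n\to0$ since $n(t)\in L^1(\R^d)$ uniformly on $[0,T]$ (this integrability, and even the finiteness of the moment $\int n(1+|x|+|\ln n|)$, being carried along from assumption $(\mathbf C)$ by the local solution). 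Letting $R\to\infty$ gives $\f{d}{dt}\|n(t)\|_{L^1}=0$. With $n,c\ge0$ now in hand, multiplying \eqref{EQ}$_2$ by $p\,c^{p-1}$, $1\le p<\infty$, and integrating: the transport term vanishes, the diffusion term equals $-p(p-1)\int c^{p-2}|\na c|^2\le0$, and $-p\int c^{p-1}nf(c)\le0$ because $c,n,f(c)\ge0$; hence $\f{d}{dt}\|c(t)\|_{L^p}^p\le0$ and $\|c(t)\|_{L^p}\le\|c_0\|_{L^p}$. The endpoint $p=\infty$ follows by letting $p\to\infty$, using $c(t),c_0\in L^1(\R^d)\cap L^\infty(\R^d)$, or by comparison with the constant supersolution $\|c_0\|_{L^\infty}$.

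\emph{Main difficulty.} The algebra is routine; the two points that genuinely need care are (i) making the cut-off limits in the mass identity rigorous, which rests on the $L^1$-integrability of $n$ inherited from $(\mathbf C)$ and not on the $H^{m-1}$-bound alone, and (ii) controlling the chemotactic flux in the $n_-$-estimate: when $m=3$ one only has $\Del c\in H^1(\R^d)\not\hookrightarrow L^\infty(\R^d)$, so one must keep the derivative on $n_-$ and absorb a fraction of $\|\na n_-\|_{L^2}^2$ into the diffusion rather than estimating $\|\Del c\|_{L^\infty}$. A minor technical point is the mere $C^1$-regularity of $f$ near $0$ in $(\mathbf A)$, which is handled by the factorization $f(s)=s\,g(s)$.
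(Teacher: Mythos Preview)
Your proof is correct and supplies exactly the standard details the paper omits: the paper's own ``proof'' is a one-line citation to \cite{Duan2017JDE}, so there is nothing substantive to compare. Your Stampacchia truncation for nonnegativity, the cut-off argument for mass conservation, and the $L^p$ testing for the decay of $\|c\|_{L^p}$ are the expected ingredients, and you correctly flag the two genuine subtleties (propagating $n\in L^1$ from assumption $(\mathbf C)$, and keeping the derivative on $n_-$ rather than on $\chi(c)\nabla c$ when $m=3$).
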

\begin{proof}
The proofs are pretty standard and we may refer to \cite[Lemma 2.1]{Duan2017JDE} for details.
\end{proof}

Based on the assumption (\textbf{A}) and the boundedness of $c$ presented in Lemma \ref{mass}, we introduce the following notations for simplicity
\[
c_{\infty}:=\|c_0\|_{L^{\infty}(\R^d)},
\quad \mathcal{C}_{f}:=\sup_{0\leq c\leq c_{\infty}}\big(|f(c)|+|f'(c)\big),
\quad \mathcal{C}_{\chi}:=\sup_{0\le c\leq c_{\infty}}\big(|\chi(c)|+|\chi'(c)|\big).
\]

\vspace{2mm}

\section{Extensibility Criterions}\label{sec:blowup}

In this section, we devote ourselves to establish some blow-up criteria, which will be significant tools for the proof of global existence.

As is known that the local existence result is a natural blow-up criteria. That is to say, if the maximal time $T^*$ of existence obtained in Lemma \ref{local} is finite, then
\begin{align}\label{blowup-local}
&\sup_{0\leq t\leq T^*}\Big(\|n(t)\|_{H^{m-1}(\R^d)}^2  + \|c(t)\|_{H^{m}(\R^d)}^2 + \|\bfu(t)\|_{H^{m}(\R^d)}^2  \Big)  \nn \\
&\qquad +\int_0^{T^*} \Big(\|\na n(t)\|_{H^{m-1}(\R^d)}^2  + \|\na c(t)\|_{H^{m}(\R^d)}^2 + \|\na\bfu(t)\|_{H^{m}(\R^d)}^2 \Big)dt = \infty
\end{align}
for $m\geq3$ and $d=2, 3$.

\subsection{Extensibility criterion in $\R^3$}

With the blow-up criteria \eqref{blowup-local} at hand, we are able to accomplish the proof of Theorem \ref{blowup1} in the following.

\begin{proof}[Proof of Theorem \ref{blowup1}]
We will prove this Theorem by contradictory arguments. Suppose that the assertions in Theorem \ref{blowup1} are not true, i.e.
\begin{equation}\label{blow:no-1}
(\kappa\neq0)\qquad\int_0^{T^*}\|\na c\|^{s_1}_{L^{r_1}(\R^3)}   +   \|\bfu\|_{L^{r_2}(\R^3)}^{s_2} dt <  +\infty
\end{equation}
for some $(r_i,s_i)$ satisfying $\f{3}{r_i}+\f{2}{s_i}\leq1$ and $3<r_i\leq+\infty$ ($i=1,2$), and
\begin{equation}\label{blow:no-2}
(\kappa=0)\qquad\int_0^{T^*}\|\na c\|^{s_1}_{L^{r_1}(\R^3)}  dt <  +\infty
\end{equation}
for some $(r_1,s_1)$ satisfying $\f{3}{r_1}+\f{2}{s_1}\leq1$ and $3<r_1\leq+\infty$. We now show that 
 the assumption $T^*<+\infty$  will lead to a contradiction to the local well-posedness result in Lemma \ref{local}.
 
\vspace{2mm}
\textbf{Step 1}: Estimate of $(n, c, \bfu)$ in $L^{\infty}_tL_x^2\times L^{\infty}_tH_x^1\times L^{\infty}_tH_x^1$.

Firstly, multiplying \eqref{EQ}$_1$ by $n$, integrating by parts, and using H\"{o}lder's inequality and Young's inequality, one has
\begin{align}\label{blow:nL2}
\f12\f{d}{dt}\|n\|_{L^2(\R^3)}^2  +  \|\na n\|_{L^2(\R^3)}^2  
& =  \int_{\R^3}\chi(c)n\na c\cd\na n   -  \int_{\R^3}n\na \phi\cd\na n  \nn \\
& \le  \f18\|\na n\|_{L^2(\R^3)}^2  +  4\mathcal{C}_{\chi}^2\|n\na c\|_{L^2(\R^3)}^2  +  4\|n\na\phi\|_{L^2(\R^3)}^2.
\end{align}
It follows from H\"{o}lder's inequality again and the Gagliardo-Nirenberg inequality that
\begin{align}\label{blow:nc}
\|n\na c\|_{L^2(\R^3)}^2
\le \|n\|_{L^{\f{2r_1}{r_1-2}}(\R^3)}^2  \|\na c\|_{L^{r_1}(\R^3)}^2
\le C\|\na n\|_{L^2(\R^3)}^{\f{6}{r_1}}  \|n\|_{L^2(\R^3)}^{\f{2(r_1-3)}{r_1}}  \|\na c\|_{L^{r_1}(\R^3)}^2.
\end{align}
Substituting \eqref{blow:nc} into \eqref{blow:nL2} and using Young's inequality, we have
\begin{align*}
&\f12\f{d}{dt}\|n\|_{L^2(\R^3)}^2  +  \|\na n\|_{L^2(\R^3)}^2   \nn \\
& \le  \f18\|\na n\|_{L^2(\R^3)}^2  + C\|\na n\|_{L^2(\R^3)}^{\f{6}{r_1}}  \|n\|_{L^2(\R^3)}^{\f{2(r_1-3)}{r_1}}  \|\na c\|_{L^{r_1}(\R^3)}^2
    + 4\|\na\phi\|_{L^{\infty}(\R^3)}^2\|n\|_{L^2(\R^3)}^2   \nn \\
&\le   \f14\|\na n\|_{L^2(\R^3)}^2  +  C\|n\|_{L^2(\R^3)}^2\|\na c\|_{L^{r_1}(\R^3)}^{\f{2r_1}{r_1-3}}  +  C\|n\|_{L^2(\R^3)}^2,
\end{align*}
which implies that
\begin{align}\label{blow:nL2-1}
\f{d}{dt}\|n\|_{L^2(\R^3)}^2  +  \f32\|\na n\|_{L^2(\R^3)}^2
\le  C\Big(1+\|\na c\|_{L^{r_1}(\R^3)}^{\f{2r_1}{r_1-3}} \Big)\|n\|_{L^2(\R^3)}^2.
\end{align}

Multiplying \eqref{EQ}$_2$ by $c$ and using the nonnegativity of $f$, $n$, $c$, one can directly obtain that
\begin{align}\label{blow:cL2}
\f{d}{dt}\|c\|_{L^2(\R^3)}^2   +  2\|\na c\|_{L^2(\R^3)}^2  \leq 0.
\end{align}

For the fluid velocity $\bfu$, it follows from H\"{o}lder's inequality, Young's inequality and \eqref{blow:nc} that
\begin{align}\label{blow:uL2}
&\f12\f{d}{dt}\|\bfu\|_{L^2(\R^3)}^2  +  \|\na\bfu\|_{L^2(\R^3)}^2  \nn \\
& = -\int_{\R^3}\bfu\cd n\na\phi  +  \int_{\R^3}\bfu\cd \chi(c)n\na c   \nn\\
& \le \|\bfu\|_{L^2(\R^3)}^2  +  \f12\|\na\phi\|_{L^{\infty}(\R^3)}^2\|n\|_{L^2(\R^3)}^2
    +  \f12\mathcal{C}_{\chi}^2\|n\na c\|_{L^2(\R^3)}^2   \nn\\
& \le \|\bfu\|_{L^2(\R^3)}^2  + C\|n\|_{L^2(\R^3)}^2
+  C\|\na n\|_{L^2(\R^3)}^{\f{6}{r_1}}  \|n\|_{L^2(\R^3)}^{\f{2(r_1-3)}{r_1}}  \|\na c\|_{L^{r_1}(\R^3)}^2  \nn \\
& \le \|\bfu\|_{L^2(\R^3)}^2  +  \f18\|\na n\|_{L^2(\R^3)}^2
    +  C\Big(1 +  \|\na c\|_{L^{r_1}(\R^3)}^{\f{2r_1}{r_1-3}}\Big)\|n\|_{L^2(\R^3)}^2.
\end{align}

For the estimate of $\na c$, we multiply $-\Del c$ to both sides of \eqref{EQ}$_2$ and integrate the resulting equation to get that
\begin{align*}
\f12\f{d}{dt}\|\na c\|_{L^2(\R^3)}^2  +  \|\na^2c\|_{L^2(\R^3)}^2
& = \int_{\R^3}\Del c\,\bfu\cd\na c  +  \int_{\R^3}\Del c \,nf(c)   \nn \\
& = \int_{\R^3}(\na^2c:\na\bfu)\, c  +  \int_{\R^3}\Del c \,nf(c)   \nn \\
& \le \f12\|\na^2c\|_{L^2(\R^3)}^2  +  \|\na\bfu\|_{L^2(\R^3)}^2\|c\|_{L^{\infty}(\R^3)}^2    +  \mathcal{C}_{f}^2\|n\|_{L^2(\R^3)}^2,
\end{align*}
where we used $\na\cd\bfu=0$ and the equality that
\begin{align*}
\int_{\R^3}\Del c\,\bfu\cd\na c
& = \sum_{i,j=1}^3\int_{\R^3}\pa_i\pa_i c\,u_j\,\pa_j c
 = -\sum_{i,j=1}^3\int_{\R^3}\pa_i\pa_i\pa_j c\,u_j\,c   \nn\\
& = \sum_{i,j=1}^3\int_{\R^3}\pa_i\pa_j c\,\pa_i u_j\, c  + \sum_{i,j=1}^3\int_{\R^3}\pa_i\pa_j c\, u_j\,\pa_i c
= \int_{\R^3}(\na^2c:\na\bfu)\, c.
\end{align*}
Thus we obtain
\begin{align}\label{blow:H1c}
\f{d}{dt}\|\na c\|_{L^2(\R^3)}^2  +  \|\na^2c\|_{L^2(\R^3)}^2
\le 2c_{\infty}^2\|\na\bfu\|_{L^2(\R^3)}^2   +  C\|n\|_{L^2(\R^3)}^2
\end{align}
due to $\|c\|_{L^{\infty}(\R^3)}\leq \|c_0\|_{L^{\infty}(\R^3)}=:c_{\infty}$.

Analogously, multiplying both sides of \eqref{EQ}$_3$ by $-\Del\bfu$ and using the integration by parts, one has
\begin{align}\label{blow:H1u}
& \f12\f{d}{dt}\|\na\bfu\|_{L^2(\R^3)}^2  +  \|\na^2\bfu\|_{L^2(\R^3)}^2 \nonumber \\
& = \kappa\int_{\R^3}\Del\bfu\cd(\bfu\cd\na)\bfu  +  \int_{\R^3}\Del\bfu\cd n\na\phi  -  \int_{\R^3}\Del\bfu\cd\chi(c)n\na c.
\end{align}
It follows from H\"{o}lder's inequality and Gagliardo-Nirenberg inequality that
\begin{align}\label{blow:H1u-star}
\int_{\R^3}\Del\bfu\cd(\bfu\cd\na)\bfu
&\leq \|\Del\bfu\|_{L^2(\R^3)} \|\bfu\cd\na\bfu\|_{L^2(\R^3)}   \nn \\
&\leq \|\Del\bfu\|_{L^2(\R^3)}\|\bfu\|_{L^{r_2}(\R^3)}\|\na\bfu\|_{L^{\f{2r_2}{r_2-2}}(\R^3)}  \nn \\
&\leq C\|\na^2\bfu\|_{L^2(\R^3)}^{\f{r_2+3}{r_2}}\|\bfu\|_{L^{r_2}(\R^3)}\|\na\bfu\|_{L^2(\R^3)}^{\f{r_2-3}{r_2}}.
\end{align}
Thus by applying Young's inequality and using \eqref{blow:nc}, we have
\begin{align*}
&\f12\f{d}{dt}\|\na\bfu\|_{L^2(\R^3)}^2  +  \|\na^2\bfu\|_{L^2(\R^3)}^2  \nn \\
&\le C\kappa\|\na^2\bfu\|_{L^2(\R^3)}^{\f{r_2+3}{r_2}}\|\bfu\|_{L^{r_2}(\R^3)}\|\na\bfu\|_{L^2(\R^3)}^{\f{r_2-3}{r_2}}
  +  \|\Del\bfu\|_{L^2(\R^3)}\|n\|_{L^2(\R^3)}\|\na\phi\|_{L^{\infty}(\R^3)}  \nn \\
&\qquad  + C\|\Del\bfu\|_{L^2(\R^3)}\|\na n\|_{L^2(\R^3)}^{\f{3}{r_1}}  \|n\|_{L^2(\R^3)}^{\f{r_1-3}{r_1}}  \|\na c\|_{L^{r_1}(\R^3)}  \nn \\
&\le  \f12\|\na^2\bfu\|_{L^2(\R^3)}^2  +  C\kappa^{\f{2r_2}{r_2-3}}\|\bfu\|_{L^{r_2}(\R^3)}^{\f{2r_2}{r_2-3}}\|\na\bfu\|_{L^2(\R^3)}^2  \nn \\
&\qquad  +  C\|n\|_{L^2(\R^3)}^2  +  \f18\|\na n\|_{L^2(\R^3)}^2
   +  C\|n\|_{L^2(\R^3)}^2\|\na c\|_{L^{r_1}(\R^3)}^{\f{2r_1}{r_1-3}},
\end{align*}
which implies that
\begin{align}\label{blow:H1u-1}
&\f{d}{dt}\|\na\bfu\|_{L^2(\R^3)}^2  +  \|\na^2\bfu\|_{L^2(\R^3)}^2\nn \\
&\le C\kappa^{\f{2r_2}{r_2-3}}\|\bfu\|_{L^{r_2}(\R^3)}^{\f{2r_2}{r_2-3}}\|\na\bfu\|_{L^2(\R^3)}^2   +  \f14\|\na n\|_{L^2(\R^3)}^2
   +  C\Big(1+\|\na c\|_{L^{r_1}(\R^3)}^{\f{2r_1}{r_1-3}} \Big)\|n\|_{L^2(\R^3)}^2.
\end{align}
Collecting \eqref{blow:nL2-1}-\eqref{blow:H1c} and \eqref{blow:H1u-1}, we have
\begin{align*}
&\f{d}{dt} \Big(\|n\|_{L^2(\R^3)}^2+\|c\|_{H^1(\R^3)}^2 + \|\bfu\|_{H^1(\R^3)}^2\Big)
   + \Big(\|\na n\|_{L^2(\R^3)}^2+\|\na c\|_{H^1(\R^3)}^2 + \|\na\bfu\|_{H^1(\R^3)}^2\Big)  \nn \\
& \le C\Big(1+\|\na c\|_{L^{r_1}(\R^3)}^{\f{2r_1}{r_1-3}} +  \kappa^{\f{2r_2}{r_2-3}}\|\bfu\|_{L^{r_2}(\R^3)}^{\f{2r_2}{r_2-3}}\Big)\Big(\|n\|_{L^2(\R^3)}^2+\|\bfu\|_{H^1(\R^3)}^2 \Big)
\end{align*}
and thus deduce from Gronwall's inequality that
\begin{align}\label{blow:L2ncu-1}
&\sup_{0\leq t\leq T^*} \Big(\|n\|_{L^2(\R^3)}^2+\|c\|_{H^1(\R^3)}^2 + \|\bfu\|_{H^1(\R^3)}^2 \Big) \nn \\
 &\qquad +   \int_0^{T^*}\lt(\|\na n\|_{L^2(\R^3)}^2+\|\na c\|_{H^1(\R^3)}^2+\|\na\bfu\|_{H^1(\R^3)}^2\rt) dt  \nn \\
&\le C\Big(\|n_0\|_{L^2(\R^3)}^2+\|c_0\|_{H^1(\R^3)}^2+\|\bfu_0\|_{H^1(\R^3)}^2\Big)   \nn \\
&\qquad   \cdot \exp\Big\{\int_0^{T^*} \Big(1 +  \|\na c\|_{L^{r_1}(\R^3)}^{s_1} +  \kappa^{\f{2r_2}{r_2-3}}\|\bfu\|_{L^{r_2}(\R^3)}^{s_2}\Big) dt \Big\}
\leq C_1,
\end{align}
where $C_1$ is a positive constant depending only on initial data and $T^*$, and we also used the assumptions \eqref{blow:no-1}-\eqref{blow:no-2} and the facts that $\f{2r_1}{r_1-3}\leq s_1$ and $\f{2r_2}{r_2-3}\leq s_2$.

\vspace{2mm}
\textbf{Step 2}: Estimate of $(\na n,\na^2c,\na^2\bfu)$ in $L^{\infty}_tL_x^2\times L^{\infty}_tL_x^2\times L^{\infty}_tL_x^2$.

Considering that the above obtained estimate \eqref{blow:L2ncu-1} is not enough to control the strong nonlinear chemotaxis term $\na\cd(\chi(c)n\na c)$ when $\chi(c)$ is not constant, we estimate $\na^2c$ firstly. To this end, applying $\na^2$ to both side of \eqref{EQ}$_2$, multiplying the resulting equation by $\na^2c$, and using the integration by parts, we have
\begin{align}\label{blow:cH2-1}
\f12\f{d}{dt}\|\na^2c\|_{L^2(\R^3)}^2   +   \|\na^3c\|_{L^2(\R^3)}^2
&=-\int_{\R^3}\na^2c\cd\na^2(\bfu\cd\na c)  -  \int_{\R^3}\na^2c\cd\na^2\big(nf(c)\big)   \nn \\
&=\int_{\R^3}\na\Del c\cd\na(\bfu\cd\na c)  +  \int_{\R^3}\na\Del c\cd\na\big(nf(c)\big).
\end{align}
By H\"{o}lder's inequality and Gagliardo-Nirenberg inequality, one has
\begin{align}\label{I-1-1}
\int_{\R^3}\na\Del c\cd\na(\bfu\cd\na c)
&\leq \|\na^3c\|_{L^2(\R^3)}\|\na(\bfu\cd\na c)\|_{L^2(\R^3)}  \nn \\
&\leq \|\na^3c\|_{L^2(\R^3)}\Big(\|\na\bfu\|_{L^3(\R^3)}\|\na c\|_{L^6(\R^3)}
   +  \|\bfu\|_{L^{\infty}(\R^3)}\|\na^2c\|_{L^2(\R^3)}\Big)   \nn\\
&\leq \f14\|\na^3c\|_{L^2(\R^3)}^2  +  C\|\bfu\|_{H^2(\R^3)}^2\|\na^2c\|_{L^2(\R^3)}^2.
\end{align}
For the second term on the right-hand-side of \eqref{blow:cH2-1}, we deduce from H\"{o}lder's inequality and \eqref{blow:nc} that
\begin{align}\label{I-1-2}
\int_{\R^3}\na\Del c\cd\na\big(nf(c)\big)
&\le \|\na^3c\|_{L^2(\R^3)}\|\na\big(nf(c)\big)\|_{L^2(\R^3)}   \nn\\
&\le \f14\|\na^3c\|_{L^2(\R^3)}^2  +  2\mathcal{C}_{f}^2\|\na n\|_{L^2(\R^3)}^2   +  2\mathcal{C}_{f}^2\|n\na c\|_{L^2(\R^3)}^2  \nn \\
&\le \f14\|\na^3c\|_{L^2(\R^3)}^2  +  2\mathcal{C}_{f}^2\|\na n\|_{L^2(\R^3)}^2
    + C\|\na n\|_{L^2(\R^3)}^{\f{6}{r_1}}  \|n\|_{L^2(\R^3)}^{\f{2(r_1-3)}{r_1}}  \|\na c\|_{L^{r_1}(\R^3)}^2  \nn \\
&\leq \f14\|\na^3c\|_{L^2(\R^3)}^2  +  C\|\na n\|_{L^2(\R^3)}^2   +  C\|n\|_{L^2(\R^3)}^2\|\na c\|_{L^{r_1}(\R^3)}^{\f{2r_1}{r_1-3}}.
\end{align}
Substituting \eqref{I-1-1} and \eqref{I-1-2} into \eqref{blow:cH2-1}, we obtain
\begin{align*}
&\f{d}{dt}\|\na^2c\|_{L^2(\R^3)}^2   +   \|\na^3c\|_{L^2(\R^3)}^2   \nn \\
& \le   C \|\bfu\|_{H^2(\R^3)}^2\|\na^2c\|_{L^2(\R^3)}^2  +  C\|\na n\|_{L^2(\R^3)}^2
   +  C\|n\|_{L^2(\R^3)}^2\|\na c\|_{L^{r_1}(\R^3)}^{\f{2r_1}{r_1-3}},
\end{align*}
which implied by Gronwall's inequality, \eqref{blow:no-1}-\eqref{blow:no-2}, and \eqref{blow:L2ncu-1} that
\begin{align}\label{blow:cH2}
&\sup_{0\leq t\leq T^*}\|\na^2 c\|_{L^2(\R^3)}^2  +   \int_0^{T^*}\|\na^3 c\|_{L^2(\R^3)}^2 dt   \nn \\
&\le C\exp\Big\{\int_0^{T^*}\|\bfu\|_{H^2(\R^3)}^2 dt \Big\}
   \Big(\|\na^2 c_0\|_{L^2(\R^3)}^2 + \int_0^{T^*}\|\na n\|_{L^2(\R^3)}^2 dt   \nn \\
&\qquad  + \sup_{0\leq t\leq T^*}\|n\|_{L^2(\R^3)}^2
    \int_0^{T^*}\|\na c\|_{L^{r_1}(\R^3)}^{s_1}dt\Big)
\leq C_2,
\end{align}
where $C_2$ is a positive constant depending only on initial data and $T^*$.

With this at hand, we can now estimate $\na n$. Multiplying both side of \eqref{EQ}$_1$ by $-\Del n$ and integrating, we have
\begin{align}\label{blow:H1n-1}
\f12\f{d}{dt}\|\na n\|_{L^2(\R^3)}^2  +  \|\na^2 n\|_{L^2(\R^3)}^2
&=\int_{\R^3}\Del n\big(\bfu\cd\na n  +  \na\cd\lt(\chi(c)n\na c\rt)  -   \na\cd(n\na\phi)\big)  \nn \\
&\leq \f12\|\Del n\|_{L^2(\R^3)}^2   +  C\|\bfu\|_{L^{\infty}(\R^3)}^2\|\na n\|_{L^2(\R^3)}^2   \nn \\
&\qquad  +C\|\na\big(\chi(c)n\na c\big)\|_{L^2(\R^3)}^2  +  C\|\na(n\na\phi)\|_{L^2(\R^3)}^2.
\end{align}
For the last two terms on the right-hand-side of \eqref{blow:H1n-1}, it follows from H\"{o}lder's inequality and Gagliardo-Nirenberg inequality that
\begin{align}\label{blow:H1n-1-1}
\|\na\big(\chi(c)n\na c\big)\|_{L^2(\R^3)}^2
&\leq 3\mathcal{C}_{\chi}^2\|n\|_{L^6(\R^3)}^2\|\na c\|_{L^6(\R^3)}^4  +  3\mathcal{C}_{\chi}^2\|\na n\|_{L^2(\R^3)}^2\|\na c\|_{L^{\infty}(\R^3)}^2   \nn \\
&\qquad  +  3\mathcal{C}_{\chi}^2\|n\|_{L^6(\R^3)}^2\|\na^2 c\|_{L^3(\R^3)}^2  \nn \\
&\leq C\|\na n\|_{L^2(\R^3)}^2\|\na^2 c\|_{L^2(\R^3)}^4
   +  C\|\na n\|_{L^2(\R^3)}^2\|\na c\|_{H^2(\R^3)}^2
\end{align}
and
\begin{align}\label{blow:H1n-1-2}
\|\na(n\na\phi)\|_{L^2(\R^3)}^2
\leq 2\|\na\phi\|_{L^{\infty}(\R^3)}^2 \|\na n\|_{L^2(\R^3)}^2  +  2\|\na^2\phi\|_{L^{\infty}(\R^3)}^2 \|n\|_{L^2(\R^3)}^2.
\end{align}
Substituting \eqref{blow:H1n-1-1} and \eqref{blow:H1n-1-2} into \eqref{blow:H1n-1}, we conclude that
\begin{align*}
&\f{d}{dt}\|\na n\|_{L^2(\R^3)}^2  +  \|\na^2 n\|_{L^2(\R^3)}^2 \nn \\
&\leq C\Big(1  +  \|\bfu\|_{H^2(\R^3)}^2  + \|\na^2 c\|_{L^2(\R^3)}^4
+\|\na c\|_{H^2(\R^3)}^2 \Big)
  \|\na n\|_{L^2(\R^3)}^2   +  C\|n\|_{L^2(\R^3)}^2,
\end{align*}
which implied by Gronwall's inequality, \eqref{blow:L2ncu-1} and \eqref{blow:cH2} that
\begin{align}\label{blow:H1n}
&\sup_{0\leq t\leq T^*}\|\na n\|_{L^2(\R^3)}^2  +   \int_0^{T^*}\|\na^2 n\|_{L^2(\R^3)}^2 dt   \nn \\
&\le C\Big(\|\na n_0\|_{L^2(\R^3)}^2  + T^*\sup_{0\leq t\leq T^*}\|n\|_{L^2(\R^3)}^2\Big)  \nn \\
&\qquad  \cdot \exp\Big\{\int_0^{T^*} \Big(1  +  \|\bfu\|_{H^2(\R^3)}^2  + \|\na^2 c\|_{L^2(\R^3)}^4
    +\|\na c\|_{H^2(\R^3)}^2 \Big) dt \Big\}
\leq C_3,
\end{align}
where $C_3$ is a positive constant depending only on initial data and $T^*$.

For the fluid velocity $\bfu$, using \eqref{blow:H1n-1-1} and \eqref{blow:H1n-1-2}, we can derive that
\begin{align*}
&\f12\f{d}{dt}\|\na^2\bfu\|_{L^2(\R^3)}^2  +  \|\na^3\bfu\|_{L^2(\R^3)}^2   \nn\\
&=\int_{\R^3}\na^2\bfu\cd\na^2\big(-\kappa(\bfu\cd\na)\bfu  -n\na\phi   +   \chi(c)n\na c\big)   \nn \\
&=-\int_{\R^3}\na\Del\bfu\cd\na\big(-\kappa(\bfu\cd\na)\bfu  -n\na\phi   +   \chi(c)n\na c\big)   \nn \\
&\leq \f12\|\na^3\bfu\|_{L^2(\R^3)}^2  + C\kappa^2\|\na\bfu\|_{L^3(\R^3)}^2\|\na\bfu\|_{L^6(\R^3)}^2
  +  C\kappa^2\|\bfu\|_{L^{\infty}(\R^3)}^2\|\na^2\bfu\|_{L^2(\R^3)}^2   \nn\\
&\quad  +  C\|\na(n\na\phi)\|_{L^2(\R^3)}^2  +  C\|\na(\chi(c)n\na c)\|_{L^2(\R^3)}^2  \nn \\
&\leq  \f12\|\na^3\bfu\|_{L^2(\R^3)}^2   +  C\kappa^2\|\bfu\|_{H^2(\R^3)}^2\|\na^2\bfu\|_{L^2(\R^3)}^2   + C\|n\|_{H^1(\R^3)}^2  \nn\\
&\quad +  C\|\na n\|_{L^2(\R^3)}^2\lt(\|\na^2 c\|_{L^2(\R^3)}^4
     +  \|\na c\|_{H^2(\R^3)}^2\rt),
\end{align*}
which implied by Gronwall's inequality, \eqref{blow:L2ncu-1}, \eqref{blow:cH2} and \eqref{blow:H1n} that
\begin{align*}
&\sup_{0\leq t\leq T^*}\|\na^2\bfu\|_{L^2(\R^3)}^2  +   \int_0^{T^*}\|\na^3\bfu\|_{L^2(\R^3)}^2 dt   \nn \\
&\leq C\exp\lt\{\kappa^2\int_0^{T^*}\|\bfu\|_{H^2(\R^3)}^2 dt\rt\}
   \Bigg(\|\na^2\bfu_0\|_{L^2(\R^3)}^2  +\int_0^{T^*}\|n\|_{H^1(\R^3)}^2 dt  \nn \\
&\quad + \sup_{0\leq t\leq T^*}\|\na n\|_{L^2(\R^3)}^2  \bigg(T^*\sup_{0\leq t\leq T^*}\|\na^2c\|_{L^2(\R^3)}^4
   + \int_0^{T^*} \|\na c\|_{H^2(\R^3)}^2 dt\bigg)\Bigg)
\leq C_4,
\end{align*}
where $C_4$ is a positive constant depending only on initial data and $T^*$.

\vspace{2mm}

\textbf{Step 3}: Estimate of $(n,c,\bfu)$ in $L^{\infty}_tH_x^{m-1}\times L^{\infty}_tH_x^m\times L^{\infty}_tH_x^m$ $(m\geq3)$.

Now, we are ready to estimate $(n,c,\bfu)$ in $H^{m-1}\times H^m\times H^m$ space for $m\geq1$ by induction. From the above two steps, the case $m=1,2$ are proved. To deal with the case $m\geq3$, we firstly take $\pa^{\alpha}$ $(2\leq|\alpha|\leq m-1)$ derivative to both side of equation \eqref{EQ}$_1$, multiply the resulting equation by $\pa^{\alpha}n$, and integrate to get that
\begin{align}\label{Hmn-1}
\f12\f{d}{dt}\|\pa^{\alpha}n\|_{L^2(\R^3)}^2  +  \|\na\pa^{\alpha}n\|_{L^2(\R^3)}^2
&=-\int_{\R^3}\pa^{\alpha}n\,\pa^{\alpha}(\bfu\cd\na n)   -  \int_{\R^3}\pa^{\alpha}n\,\pa^{\alpha}\big(\na\cd(\chi(c)n\na c)\big)  \nn \\
&  \quad\,\, +\int_{\R^3}\pa^{\alpha}n\,\pa^{\alpha}\big(\na\cd(n\na\phi)\big).
\end{align}
We will estimate the three terms on the right-hand-side of \eqref{Hmn-1} one by one. Firstly, by divergence-free property of $\bfu$ and the tame estimates in Lemma \ref{leibniz}, we have
\begin{align}\label{I-5-1}
-\int_{\R^3}\pa^{\alpha}n\,\pa^{\alpha}(\bfu\cd\na n)
 &=\int_{\R^3}\na\pa^{\alpha}n\cd\pa^{\alpha}(\bfu n)  \nn \\
&\leq C\|\na n\|_{H^{m-1}(\R^3)}\lt(\|\bfu\|_{H^{m-1}(\R^3)}\|n\|_{L^{\infty}(\R^3)}
   + \|\bfu\|_{L^{\infty}(\R^3)}\|n\|_{H^{m-1}(\R^3)}\rt) \nn\\
&\leq C\|\na n\|_{H^{m-1}(\R^3)}\|\bfu\|_{H^{m-1}(\R^3)}\|n\|_{H^{m-1}(\R^3)}
\end{align}
due to $m\geq3$. Similarly, we proceed to have
\begin{align}\label{I-6-1}
-  \int_{\R^3}\pa^{\alpha}n\,\pa^{\alpha}\big(\na\cd(\chi(c)n\na c)\big)
&= \int_{\R^3}\na\pa^{\alpha}n\cd\pa^{\alpha}\big(\chi(c)n\na c\big)  \nn \\
&\leq C\|\na n\|_{H^{m-1}(\R^3)}\|\chi(c)n\na c\|_{H^{m-1}(\R^3)}   \nn \\
&\leq C\|\na n\|_{H^{m-1}(\R^3)}\|n\|_{H^{m-1}(\R^3)}\|\chi(c)\na c\|_{H^{m-1}(\R^3)}.
\end{align}
For the third factor on the right-hand-side of \eqref{I-6-1}, we use the Leibniz formula, Sobolev embedding, and Moser estimate in Lemma \ref{moser}  to get that
\begin{align}\label{I-6-3}
\|\chi(c)\na c\|_{H^{m-1}(\R^3)}
&\leq C\|c\|_{H^m(\R^3)}  +  \sum_{1\leq|\beta|\leq m-1}\bigg\|\sum_{\ga\leq\beta,|\ga|<|\beta|}C^{\gamma}_{\beta}\pa^{\beta-\gamma}\chi(c)\na\pa^{\gamma}c\bigg\|_{L^2(\R^3)}  \nn \\
&\leq C\|c\|_{H^m(\R^3)}  +  C\sum_{1\leq|\beta|\leq m-1}\bigg(\|\pa^{\beta}\chi(c)\|_{L^2(\R^3)}\|\na c\|_{L^{\infty}(\R^3)}  \nn \\
&\qquad    + \sum_{|\gamma|=1}\|\pa^{\beta-\gamma}\chi(c)\|_{L^4(\R^3)}\|\na\pa^{\gamma}c\|_{L^4(\R^3)}  +  \cdots  \nn\\
&\qquad +  \sum_{|\gamma|=|\beta|-1}\|\pa^{\beta-\gamma}\chi(c)\|_{L^{\infty}(\R^3)}\|\na\pa^{\gamma}c\|_{L^2(\R^3)}\bigg)  \nn \\
&\leq C\|c\|_{H^m(\R^3)}  + C\sum_{1\leq|\beta|\leq m-1}
     \bigg(\|c\|_{L^{\infty}(\R^3)}^{|\beta|-1}\|\na^{|\beta|}c\|_{L^2(\R^3)}\|\na c\|_{H^2(\R^3)} \nn \\
&\qquad +  \|c\|_{L^{\infty}(\R^3)}^{|\beta|-2}\|\na^{|\beta|-1}c\|_{L^4(\R^3)}\|\na^2 c\|_{L^4(\R^3)}  +  \cdots  \nn\\
&\qquad + \|\na c\|_{L^{\infty}(\R^3)}\|\na^{|\beta|}c\|_{L^2(\R^3)}\bigg)   \nn \\
&\leq C\|c\|_{H^m(\R^3)}  +  C\lt(1+\|c\|_{L^{\infty}(\R^3)}^{m-2}\rt)\|c\|_{H^{m-1}(\R^3)}\|\na c\|_{H^{m-1}(\R^3)}  \nn \\
&\leq C\|c\|_{H^m(\R^3)}  +  C\lt(1+c_{\infty}^{m-2}\rt)\|c\|_{H^{m-1}(\R^3)}\|\na c\|_{H^{m-1}(\R^3)},
\end{align}
which together with \eqref{I-6-1} yields that
\begin{align}\label{I-6-2}
&-  \int_{\R^3}\pa^{\alpha}n\,\pa^{\alpha}\big(\na\cd(\chi(c)n\na c)\big)  \nn \\
&\le C\|\na n\|_{H^{m-1}(\R^3)}\|n\|_{H^{m-1}(\R^3)}\lt(\|c\|_{H^m(\R^3)}  +  \|c\|_{H^{m-1}(\R^3)}\|\na c\|_{H^{m-1}(\R^3)}\rt).
\end{align}
Finally, due to the assumption $\phi\in W^{m,\infty}(\R^3)$, we can deduce from Leibniz formula that
\begin{align}\label{I-7-1}
\int_{\R^3}\pa^{\alpha}n\,\pa^{\alpha}\big(\na\cd(n\na\phi)\big)
&=-\int_{\R^3}\na\pa^{\alpha}n\cd\pa^{\alpha}(n\na\phi)   \nn \\
&\leq C\|\na n\|_{H^{m-1}(\R^3)}\|n\na\phi\|_{H^{m-1}(\R^3)}   \nn \\
&\leq C\|\na n\|_{H^{m-1}(\R^3)}\|n\|_{H^{m-1}(\R^3)}.
\end{align}
Then after substituting \eqref{I-5-1}, \eqref{I-6-2} and \eqref{I-7-1} into \eqref{Hmn-1} and taking summation over $|\alpha|\leq m-1$, it shows
\begin{align}\label{Hmn}
&\f{d}{dt}\|n\|_{H^{m-1}(\R^3)}^2  +  \|\na n\|_{H^{m-1}(\R^3)}^2   \nn \\
&\leq C\Big(1+ \|\bfu\|_{H^{m-1}(\R^3)}^2 + \|c\|_{H^m(\R^3)}^2  + \|c\|_{H^{m-1}(\R^3)}^2\|\na c\|_{H^{m-1}(\R^3)}^2\Big)\|n\|_{H^{m-1}(\R^3)}^2 .
\end{align}
Similarly, by the divergence-free property of $\bfu$, for $3\leq|\alpha|\leq m$, we deduce that
\begin{align*}
&\f12\f{d}{dt}\|\pa^{\alpha}c\|_{L^2(\R^3)}^2  +  \|\na\pa^{\alpha}c\|_{L^2(\R^3)}^2  \nn \\
&=-\int_{\R^3}\pa^{\alpha}c\,\pa^{\alpha}(\bfu\cd\na c)  -  \int_{\R^3}\pa^{\alpha}c\,\pa^{\alpha}\big(nf(c)\big)  \nn\\
&=-\int_{\R^3}\pa^{\alpha}c\,\big(\pa^{\alpha}(\bfu\cd\na c)-(\bfu\cd\na)\pa^{\alpha}c\big)  -  \int_{\R^3}\pa^{\alpha}c\,\pa^{\alpha}\big(nf(c)\big)  \nn\\
&\leq C\|c\|_{H^m(\R^3)}\Big(\|\bfu\|_{H^m(\R^3)}\|\na c\|_{L^{\infty}(\R^3)}   +  \|\na\bfu\|_{L^{\infty}(\R^3)}\|c\|_{H^m(\R^3)}\Big)  \nn\\
&\qquad  + C\|\na c\|_{H^m(\R^3)}\|nf(c)\|_{H^{m-1}(\R^3)},
\end{align*}
where
\begin{align*}
\|nf(c)\|_{H^{m-1}(\R^3)}
&\leq C\|n\|_{H^{m-1}(\R^3)}
   + \sum_{1\leq|\beta|\leq m-1}\bigg\|\sum_{\ga\leq\beta,|\ga| < |\beta|}C_{\beta}^{\ga}\pa^{\ga}n\,\pa^{\beta-\ga}f(c)\bigg\|_{L^2(\R^3)}   \\
&\leq  C\|n\|_{H^{m-1}(\R^3)}
   + C\sum_{1\leq|\beta|\leq m-1}\bigg(\|n\|_{L^{\infty}(\R^3)}\|\pa^{\beta}f(c)\|_{L^2(\R^3)}  \\
&\qquad   +  \sum_{|\ga|=1}\|\pa^{\ga}n\|_{L^4(\R^3)}\|\pa^{\beta-\ga}f(c)\|_{L^4(\R^3)}  +  \cdots  \nn\\
&\qquad + \sum_{|\ga|=|\beta|-1}\|\pa^{\ga}n\|_{L^2(\R^3)}\|\pa^{\beta-\ga}f(c)\|_{L^{\infty}(\R^3)}\bigg)  \nn \\
&\leq  C\|n\|_{H^{m-1}(\R^3)}
   + C\sum_{1\leq|\beta|\leq m-1}\bigg(\|n\|_{L^{\infty}(\R^3)}\|c\|_{L^{\infty}(\R^3)}^{|\beta|-1}\|\na^{|\beta|}c\|_{L^2(\R^3)}  \nn \\
&\qquad +  \|\na n\|_{L^4(\R^3)}\|c\|_{L^{\infty}(\R^3)}^{|\beta|-2}\|\na^{|\beta|-1}c\|_{L^4(\R^3)} +  \cdots  \nn\\
&\qquad + \|\na^{|\beta|-1}n\|_{L^2(\R^3)}\|\na c\|_{L^{\infty}(\R^3)}\bigg)  \nn\\
&\leq  C\|n\|_{H^{m-1}(\R^3)}
   + C \lt(1+\|c\|_{L^{\infty}(\R^3)}^{m-2}\rt)\|n\|_{H^{m-1}(\R^3)}\|c\|_{H^{m}(\R^3)}  \nn \\
&\leq  C\|n\|_{H^{m-1}(\R^3)}
   + C \lt(1+c_{\infty}^{m-2}\rt)\|n\|_{H^{m-1}(\R^3)}\|c\|_{H^{m}(\R^3)}.
\end{align*}
Thus we have
\begin{align}\label{Hmc}
&\f{d}{dt}\|c\|_{H^{m}(\R^3)}^2  +  \|\na c\|_{H^{m}(\R^3)}^2  \nn \\
&\leq C\Big(1+ \|\na c\|_{H^{m-1}(\R^3)}  +  \|c\|_{H^{m}(\R^3)}^2 + \|\na\bfu\|_{H^{m-1}(\R^3)}\Big)  \nn \\
&\qquad \cdot \Big(\|n\|_{H^{m-1}(\R^3)}^2 + \|c\|_{H^{m}(\R^3)}^2 + \|\bfu\|_{H^{m}(\R^3)}^2 \Big)   \nn \\
&\le C\Big(1+ \|c\|_{H^{m}(\R^3)}^2 + \|\bfu\|_{H^{m}(\R^3)}^2\Big)  \Big(\|n\|_{H^{m-1}(\R^3)}^2 + \|c\|_{H^{m}(\R^3)}^2 + \|\bfu\|_{H^{m}(\R^3)}^2\Big).
\end{align}
Finally, for the $H^m$ estimate of $\bfu$, one has
\begin{align*}
&\f12\f{d}{dt}\|\pa^{\alpha}\bfu\|_{L^2(\R^3)}^2  +  \|\na\pa^{\alpha}\bfu\|_{L^2(\R^3)}^2  \nn \\
&=-\kappa\int_{\R^3}\pa^{\alpha}\bfu\cd\Big(\pa^{\alpha}\big((\bfu\cd\na)\bfu\big)-(\bfu\cd\na)\pa^{\alpha}\bfu\Big)
   -  \int_{\R^3}\pa^{\alpha}\bfu\cd\Big(\pa^{\alpha}(n\na\phi)  -  \pa^{\alpha}\big(\chi(c)n\na c\big)\Big)\nn\\
&\leq C\kappa\|\na\bfu\|_{L^{\infty}(\R^3)}\|\bfu\|_{H^m(\R^3)}^2 + C\|\na\bfu\|_{H^{m}(\R^3)}\Big(\|n\na\phi\|_{H^{m-1}(\R^3)} + \|\chi(c)n\na c\|_{H^{m-1}(\R^3)}\Big)
\end{align*}
for $3\leq|\alpha|\leq m$, which together with \eqref{I-6-3} and the summation over $|\alpha|\leq m$ shows that
\begin{align}\label{Hmu}
\f{d}{dt}\|\bfu\|_{H^{m}(\R^3)}^2  +  \|\na\bfu\|_{H^{m}(\R^3)}^2
&\leq  C\kappa\|\na\bfu\|_{H^{m-1}(\R^3)}\|\bfu\|_{H^{m}(\R^3)}^2
 + C\Big(1 + \|c\|_{H^m(\R^3)}^2   \nn \\
&\qquad + \|c\|_{H^{m-1}(\R^3)}^2\|\na c\|_{H^{m-1}(\R^3)}^2\Big)\|n\|_{H^{m-1}(\R^3)}^2.
\end{align}
Collecting \eqref{Hmn}, \eqref{Hmc}, \eqref{Hmu} and the estimates in Step 1-Step 2, we obtain
\begin{align*}
&\f{d}{dt}\Big(\|n\|_{H^{m-1}(\R^3)}^2 +\|c\|_{H^m(\R^3)}^2+\|\bfu\|_{H^m(\R^3)}^2\Big)  \nn \\
&\qquad  +  \Big(\|\na n\|_{H^{m-1}(\R^3)}^2 +\|\na c\|_{H^m(\R^3)}^2+\|\na\bfu\|_{H^m(\R^3)}^2\Big)   \nn \\
&\leq C\Big(1+ \|c\|_{H^m(\R^3)}^2 + \|\bfu\|_{H^m(\R^3)}^2 +  \|c\|_{H^{m-1}(\R^3)}^2\|\na c\|_{H^{m-1}(\R^3)}^2\Big)  \nn \\
&\qquad \cdot \Big(\|n\|_{H^{m-1}(\R^3)}^2 +\|c\|_{H^m(\R^3)}^2+\|\bfu\|_{H^m(\R^3)}^2\Big).
\end{align*}
Using Gronwall's inequality, we conclude that if \eqref{blow:no-1}-\eqref{blow:no-2} is true, it holds
\begin{align*}
&\sup_{0\leq t\leq T^*}\Big(\|n\|_{H^{m-1}(\R^3)}^2 + \|c\|_{H^m(\R^3)}^2+\|\bfu\|_{H^m(\R^3)}^2\Big) \nn\\
&\qquad +   \int_0^{T^*}\Big(\|\na n\|_{H^{m-1}(\R^3)}^2 + \|\na c\|_{H^m(\R^3)}^2 + \|\na\bfu\|_{H^m(\R^3)}^2\Big) dt  \nn \\
&\leq C\Big( \|n_0\|_{H^{m-1}(\R^3)}^2 + \|c_0\|_{H^m(\R^3)}^2 + \|\bfu_0\|_{H^m(\R^3)}^2 \Big)  \nn \\
&\qquad \cdot \exp\Big\{\int_0^{T^*}\Big(1+\|c\|_{H^m(\R^3)}^2 + \|\bfu\|_{H^m(\R^3)}^2\Big)dt  \nn \\
 &\qquad + \sup_{0\leq t\leq T^*}\|c\|_{H^{m-1}(\R^3)}^2\int_0^{T^*}\|\na c\|_{H^{m-1}(\R^3)}^2 dt \Big\}
 \leq C_5,
\end{align*}
where $C_5$ is a positive constant depending only on initial data, $m$ and $T^*$. This contradicts the assumption that $T^*$ is the maximal time of existence with $T^*<\infty$. Thus we have completed the proof of Theorem \ref{blowup1}.
\end{proof}

\subsection{Extensibility criterion in $\R^2$}

The proof of Theorem \ref{blowup2} is similar to that of Theorem \ref{blowup1}. We give a sketch for the completeness.
\begin{proof}[Proof of Theorem \ref{blowup2}]
For any given $\kappa\in\R$, suppose conversely that
\begin{align}\label{blow:no-3}
\int_0^{T^*}\|\na c\|_{L^{r_3}(\R^2)}^{s_3} dt < \infty
\end{align}
for some $(r_3,s_3)$ satisfying $\f{2}{r_3}+\f{2}{s_3}\leq 1$ and  $2<r_3\leq \infty$.

\vspace{2mm}

\textbf{Step 1}: Estimate of $(n,c,\bfu)$ in $L^{\infty}_tL_x^2\times L^{\infty}_tH_x^1\times L^{\infty}_tH_x^1$.

We first estimate the two dimensional \eqref{blow:nc} as
\begin{align}\label{blow:nc-1}
\|n\na c\|_{L^2(\R^2)}^2
\leq \|n\|_{L^{\f{2r_3}{r_3-2}}(\R^2)}^2\|\na c\|_{L^{r_3}(\R^2)}^2
\leq C\|\na n\|_{L^2(\R^2)}^{\f{4}{r_3}}\|n\|_{L^2(\R^2)}^{\f{2(r_3-2)}{r_3}}\|\na c\|_{L^{r_3}(\R^2)}^2
\end{align}
and then  obtain
\begin{equation}\label{blow:nL2-2}
\f{d}{dt}\|n\|_{L^2(\R^2)}^2 + \|\na n\|_{L^2(\R^2)}^2
\le C\Big(1+ \|\na c\|_{L^{r_3}(\R^2)}^{\f{2r_3}{r_3-2}}\Big) \|n\|_{L^2(\R^2)}^2
\end{equation}
and
\begin{equation}\label{blow:L2u}
\f12\f{d}{dt}\|\bfu\|_{L^2(\R^2)}^2  +  \|\na\bfu\|_{L^2(\R^2)}^2
\le \|\bfu\|_{L^2(\R^2)}^2  +  \f12\|\na n\|_{L^2(\R^2)}^2
 +  C\Big(1 + \|\na c\|_{L^{r_3}(\R^2)}^{\f{2r_3}{r_3-2}}\Big) \|n\|_{L^2(\R^2)}^2
\end{equation}
by following the proof of \eqref{blow:nL2-1} and \eqref{blow:uL2}, respectively.  It is clear that \eqref{blow:cL2} still holds in the two dimensional setting. Then we have from \eqref{blow:nL2-2} and \eqref{blow:L2u} that
\begin{align}\label{blow:L2nu}
&\sup_{0\le t\le T^*} \big\|(n, c, \bfu) \big\|_{L^2(\R^2)}^2
    +   \int_0^{T^*}\big\|\big(\na n, \na c, \na\bfu\big)\big\|_{L^2(\R^2)}^2  dt  \nn \\
&\le C \big\|\big(n_0, c_0, \bfu_0\big)\big\|_{L^2(\R^2)}^2  \exp\Big\{\int_0^{T^*} \Big(1 +  \|\na c\|_{L^{r_3}(\R^2)}^{s_3}\Big) dt\Big\}
\le C_1,
\end{align}
due to $\f{2r_3}{r_3-3}\leq s_3$ by applying Gronwall's inequality and \eqref{blow:no-3}, where $C_1$ is a positive constant depending only on initial data and $T^*$.

The estimate of $\na c$ is totally same as in \eqref{blow:H1c}, i.e. we have
\begin{align}\label{blow:H1c-1}
\f{d}{dt}\|\na c\|_{L^2(\R^2)}^2  +  \|\na^2c\|_{L^2(\R^2)}^2
\leq 2c_{\infty}^2\|\na\bfu\|_{L^2(\R^2)}^2   +  C\|n\|_{L^2(\R^2)}^2.
\end{align}
On the other hand,  to deal with \eqref{blow:H1u} in the two dimensional setting, we estimate \eqref{blow:H1u-star} by using Gagliardo-Nirenberg inequality as
\[
\int_{\R^2}\Del\bfu\cd(\bfu\cd\na)\bfu
\le \|\Del\bfu\|_{L^2(\R^2)}\|\bfu\|_{L^4(\R^2)}\|\na\bfu\|_{L^4(\R^2)}  
\le C\|\na^2\bfu\|_{L^2(\R^2)}^{\f32}\|\na\bfu\|_{L^2(\R^2)}\|\bfu\|_{L^2(\R^2)}^{\f12},
\]
which together with \eqref{blow:nc-1}  and Young's inequality implies that
\begin{align}\label{blow:H1u-2D}
&\f12\f{d}{dt}\|\na\bfu\|_{L^2(\R^2)}^2  +  \|\na^2\bfu\|_{L^2(\R^2)}^2   \nn \\
&\leq C \kappa \|\na^2\bfu\|_{L^2(\R^2)}^{\f32}\|\na\bfu\|_{L^2(\R^2)}\|\bfu\|_{L^2(\R^2)}^{\f12}
   +  \|\Del\bfu\|_{L^2(\R^2)}\|n\|_{L^2(\R^2)}\|\na\phi\|_{L^{\infty}(\R^2)} \nn \\
&\qquad +  C\|\Del\bfu\|_{L^2(\R^2)} \|\na n\|_{L^2(\R^2)}^{\f{2}{r_3}}\|n\|_{L^2(\R^2)}^{\f{r_3-2}{r_3}}\|\na c\|_{L^{r_3}(\R^2)}  \nn \\
&\leq \f12\|\na^2\bfu\|_{L^2(\R^2)}^2  +  C \kappa^4 \|\na\bfu\|_{L^2(\R^2)}^4\|\bfu\|_{L^2(\R^2)}^2
 +  C\|n\|_{H^1(\R^2)}^2 +  C\|n\|_{L^2(\R^2)}^2\|\na c\|_{L^{r_3}(\R^2)}^{\f{2r_3}{r_3-2}}.
\end{align}
Collecting \eqref{blow:H1c-1} and \eqref{blow:H1u-2D},  we deduce from Gronwall's inequality that
\begin{align*}
&\sup_{0\leq t\leq T^*}\lt(\|\na c\|_{L^2(\R^2)}^2+\|\na\bfu\|_{L^2(\R^2)}^2\rt)
   +   \int_0^{T^*}\lt(\|\na^2 c\|_{L^2(\R^2)}^2+\|\na^2\bfu\|_{L^2(\R^2)}^2\rt) dt  \nn \\
&\leq C\exp\Big\{\kappa^4\sup_{0\leq t\leq T^*}\|\bfu\|_{L^2(\R^2)}^2
    \int_0^{T^*}\|\na\bfu\|_{L^2(\R^2)}^2dt\Big\} \Big(\|\na c_0\|_{L^2(\R^2)}^2 + \|\na\bfu_0\|_{L^2(\R^2)}^2  \nn \\
&\qquad    + \int_0^{T^*}\big(\|\na\bfu\|_{L^2(\R^2)}^2 + \|n\|_{H^1(\R^2)}^2\big)dt
    + \sup_{0\leq t\leq T^*}\|n\|_{L^2(\R^2)}^2\int_0^{T^*}\|\na c\|_{L^{r_3}(\R^3)}^{s_3}dt \Big) 
 \le C_2
\end{align*}
for some $C_2>0$ depending only on initial data and $T^*$, which together with  \eqref{blow:L2nu} yields the estimate of $(n,c,\bfu)$ in $L^{\infty}_tL_x^2\times L^{\infty}_tH_x^1\times L^{\infty}_tH_x^1$.

\vspace{2mm}

\textbf{Step 2}: Estimate of $(\na n,\na^2c,\na^2\bfu)$ in $L^{\infty}_tL_x^2\times L^{\infty}_tL_x^2\times L^{\infty}_tL_x^2$.

To estimate $\na n$, we  bound the two-dimensional \eqref{blow:H1n-1-1} by using the embedding property $H^1(\R^2)\hookrightarrow L^p(\R^2)$ for $2<p<\infty$ and Gagliardo-Nirenberg inequality as 
\begin{align}\label{blow:H1n-2}
&\|\na\big(\chi(c)n\na c\big)\|_{L^2(\R^2)}^2   \nn \\
&\leq C \Big(\|n\|_{L^4(\R^2)}^2\|\na c\|_{L^8(\R^2)}^4  +   \|\na n\|_{L^4(\R^2)}^2\|\na c\|_{L^4(\R^2)}^2
   +  \|n\|_{L^{\infty}(\R^2)}^2\|\na^2c\|_{L^2(\R^2)}^2\Big)  \nn   \\
&\le C \Big(\|\na n\|_{L^2(\R^2)}\|n\|_{L^2(\R^2)}\|\na c\|_{H^1(\R^2)}^4   +  \|\na^2n\|_{L^2(\R^2)}\|\na n\|_{L^2(\R^2)}\|\na c\|_{H^1(\R^2)}^2 \nn \\
&\qquad  +  \|\na^2n\|_{L^2(\R^2)}\|n\|_{L^2(\R^2)}\|\na^2c\|_{L^2(\R^2)}^2 \Big)  \nn \\
&\leq \f14\|\na^2n\|_{L^2(\R^2)}^2  +  C \Big(\|\na c\|_{H^1(\R^2)}^4\|\na n\|_{L^2(\R^2)}^2  +  \|\na c\|_{H^1(\R^2)}^4\|n\|_{L^2(\R^2)}^2\Big),
\end{align}
which together with the two-dimensional \eqref{blow:H1n-1} and \eqref{blow:H1n-1-2} shows that 
\begin{align*}
&\f{d}{dt}\|\na n\|_{L^2(\R^2)}^2   + \|\na^2 n\|_{L^2(\R^2)}^2  \nn \\
&\leq C\lt(1+ \|\bfu\|_{H^2(\R^2)}^2  +  \|\na c\|_{H^1(\R^2)}^4  \rt)\|\na n\|_{L^2(\R^2)}^2
   +  C\lt(1+ \|\na c\|_{H^1(\R^2)}^4 \rt)\|n\|_{L^2(\R^2)}^2
\end{align*}
and thus that 
\begin{align}\label{blow:H1n-2D}
&\sup_{0\leq t\leq T^*}\|\na n\|_{L^2(\R^2)}^2
   +   \int_0^{T^*}\|\na^2n\|_{L^2(\R^2)}^2 dt  \nn \\
&\leq C\exp\Big\{\int_0^{T^*}\lt(1+ \|\bfu\|_{H^2(\R^2)}^2  +  \|\na c\|_{H^1(\R^2)}^4  \rt)dt \Big\}     \nn \\
&\qquad   \cdot \Big(\|\na n_0\|_{L^2(\R^2)}^2+ \sup_{0\leq t\leq T^*}\|n\|_{L^2(\R^2)}^2\int_0^{T^*}\Big(1+ \|\na c\|_{H^1(\R^2)}^4 \Big)dt \Big)
\leq C_4,
\end{align}
where $C_4$ is a positive constant depending only on initial data and $T^*$.

For the estimate of $\na^2 c$, we apply H\"{o}lder's inequality and Gagliardo-Nirenberg inequality to replace  \eqref{I-1-1} by 
\begin{align*}
&\int_{\R^2}\na\Del c\cd\na(\bfu\cd\na c)  \nn \\
&\leq \f14\|\na^3c\|_{L^2(\R^2)}^2 + C\|\na\bfu\|_{L^4(\R^2)}^2\|\na c\|_{L^4(\R^2)}^2  +  C\|\bfu\|_{L^{\infty}(\R^2)}^2\|\na^2c\|_{L^2(\R^2)}^2 \nn \\
&\leq \f14\|\na^3c\|_{L^2(\R^2)}^2+ C\|\na\bfu\|_{H^1(\R^2)}^2\|\na^2c\|_{L^2(\R^2)}\|c\|_{L^{\infty}(\R^2)}
  +  C\|\bfu\|_{H^2(\R^2)}^2\|\na^2c\|_{L^2(\R^2)}^2   \nn \\
&\leq \f14\|\na^3c\|_{L^2(\R^2)}^2+ C\|\bfu\|_{H^2(\R^2)}^2\|\na^2c\|_{L^2(\R^2)}^2 + C\|c\|_{L^{\infty}(\R^2)}^2\|\na\bfu\|_{H^1(\R^2)}^2,
\end{align*} 
which together with \eqref{blow:nc-1} and Young's inequality gives that
\begin{align*}
&\f12\f{d}{dt}\|\na^2c\|_{L^2(\R^2)}^2  +  \|\na^3c\|_{L^2(\R^2)}^2   \nn \\
&\leq \f12\|\na^3c\|_{L^2(\R^2)}^2   +  C\|\bfu\|_{H^2(\R^2)}^2\|\na^2c\|_{L^2(\R^2)}^2  +  Cc_{\infty}^2\|\bfu\|_{H^2(\R^2)}^2    \nn \\
&\qquad +  C\|\na n\|_{L^2(\R^2)}^2  +  C\|\na n\|_{L^2(\R^2)}^{\f{4}{r_3}}\|n\|_{L^2(\R^2)}^{\f{2(r_3-2)}{r_3}}\|\na c\|_{L^{r_3}(\R^2)}^2 \nn\\
&\leq \f12\|\na^3c\|_{L^2(\R^2)}^2   +  C\Big(\|\bfu\|_{H^2(\R^2)}^2\|\na^2c\|_{L^2(\R^2)}^2  +  \|\bfu\|_{H^2(\R^2)}^2    +  \|\na n\|_{L^2(\R^2)}^2  \\ 
& \qquad +  \|n\|_{L^2(\R^2)}^2\|\na c\|_{L^{r_3}(\R^2)}^{\f{2r_3}{r_3-2}}\Big).
\end{align*}
It then follows from  Gronwall's inequality that
\begin{align}\label{blow:H2c}
&\sup_{0\leq t\leq T^*}\|\na^2 c\|_{L^2(\R^2)}^2
   +   \int_0^{T^*}\|\na^3 c\|_{L^2(\R^2)}^2 dt  \nn \\
&\leq C\exp\Big\{\int_0^{T^*}\|\bfu\|_{H^2(\R^2)}^2dt\Big\}   \Big(\|\na^2 c_0\|_{L^2(\R^2)}^2
+ \int_0^{T^*}\Big(\|\bfu\|_{H^2(\R^2)}^2 + \|\na n\|_{L^2(\R^2)}^2\Big)dt  \nn \\
&\qquad    + \sup_{0\leq t\leq T^*}\|n\|_{L^2(\R^2)}^2\int_0^{T^*}\|\na c\|_{L^{r_3}(\R^3)}^{s_3}dt \Big)
\leq C_3,
\end{align}
where $C_3$ is a positive constant depending only on initial data and $T^*$.

As for the estimate of $\na^2\bfu$ in the current setting, we can use Gagliardo-Nirenberg inequality, Young's inequality, \eqref{blow:H1n-2} and \eqref{blow:H1n-1-2} to deduce that
\begin{align*}
&\f12\f{d}{dt}\|\na^2\bfu\|_{L^2(\R^2)}^2   +  \|\na^3\bfu\|_{L^2(\R^2)}^2  \nn \\
&\leq \f12\|\na^3\bfu\|_{L^2(\R^2)}^2 + C\Big( \kappa^2\|\na(\bfu\cd\na\bfu)\|_{L^2(\R^2)}^2 +  \|\na(n\na\phi)\|_{L^2(\R^2)}^2
   +    \|\na\big(\chi(c)n\na c\big)\|_{L^2(\R^2)}^2\Big)  \nn \\
&\leq \f12\|\na^3\bfu\|_{L^2(\R^2)}^2 + C\Big(\kappa^2\|\na\bfu\|_{L^4(\R^2)}^4  + \kappa^2\|\bfu\|_{L^{\infty}(\R^2)}^2\|\na^2\bfu\|_{L^2(\R^2)}^2 \nn\\
&\qquad   +   \|n\|_{H^2(\R^2)}^2    +  \|\na c\|_{H^1(\R^2)}^4\|n\|_{H^1(\R^2)}^2 \Big) \nn \\
&\le \f12\|\na^3\bfu\|_{L^2(\R^2)}^2 + C \Big(\kappa^2\|\bfu\|_{H^2(\R^2)}^2\|\na^2\bfu\|_{L^2(\R^2)}^2  + \|n\|_{H^2(\R^2)}^2    +  \|\na c\|_{H^1(\R^2)}^4\|n\|_{H^1(\R^2)}^2\Big),
\end{align*}
which implies by Gronwall's inequality that 
\begin{align}\label{blow:H2u-2}
&\sup_{0\leq t\leq T^*}\|\na^2\bfu\|_{L^2(\R^2)}^2
   +   \int_0^{T^*}\|\na^3\bfu\|_{L^2(\R^2)}^2 dt  \nn \\
&\leq C\exp\Big\{\kappa^2 \int_0^{T^*}\|\bfu\|_{H^2(\R^2)}^2dt\Big\}   \Big(\|\na^2\bfu_0\|_{L^2(\R^2)}^2 + \int_0^{T^*}\|n\|_{H^2(\R^2)}^2dt\nn \\
&\qquad    +  T^*\sup_{0\leq t\leq T^*}\|\na c\|_{H^1(\R^2)}^4\sup_{0\leq t\leq T^*}\|n\|_{H^1(\R^2)}^2 \Big)
\leq C_5,
\end{align}
where $C_5$ is a positive constant depending only on initial data and $T^*$. Combining \eqref{blow:H1n-2D}, \eqref{blow:H2c} and \eqref{blow:H2u-2}, we obtain the estimate of $(\na n,\na^2c,\na^2\bfu)$ in $L^{\infty}_tL_x^2\times L^{\infty}_tL_x^2\times L^{\infty}_tL_x^2$.

\vspace{2mm}

\textbf{Step 3:} Estimate of $(n,c,\bfu)$ in $L^{\infty}_tH_x^{m-1}\times L^{\infty}_tH_x^m\times L^{\infty}_tH_x^m$ $(m\geq3)$.

Repeating  the estimates of Step 3 in the proof of Theorem \ref{blowup1} enables us to conclude that if \eqref{blow:no-3} is true, then
\[
(n, c, \bfu) \in L^{\infty}\big(0, T^*; H^{m-1}(\R^2)\times H^m(\R^2)\times H^m(\R^2; \R^2)\big)
\]
and
\[
(\na n,\na c, \na\bfu)\in L^2\big(0, T^*; H^{m-1}(\R^2)\times H^m(\R^2) \times H^m(\R^2; \R^2)\big), 
\]
which is contradictory to the assumption that $T^*$ is the maximal time of existence with $T^*<\infty$. This completes the proof of Theorem \ref{blowup2}.
\end{proof}

\vspace{2mm}

\section{Global existence of classical solution}\label{globalexist}

In this section, we shall explore the global existence of unique classical solution to the Cauchy problem of chemotaxis-Stokes system \eqref{EQ}-\eqref{initial} in $\R^3$ and chemotaxis-Navier-Stokes system \eqref{EQ}-\eqref{initial} in $\R^2$. The key is to derive an entropy functional inequality.

As is showing in the equation \eqref{EQ}$_3$, the appearance of the force term $\chi(c)n\na c$ leads to stronger nonlinearity. It is for this reason that we need the bound estimate in the following Lemma to control this bad term when we establish an entropy inequality.

\begin{lemma}\label{Lp-n-1}
Let $0<T<\infty$, $p\in[1,+\infty)$ and $d=2,3$. Suppose that the assumptions $(\bf{A})$, $(\bf{B})$ and $(\bf{C})$ hold. There exists $\delta=\delta(p)$ such that if $\|c_0\|_{L^{\infty}(\R^d)}<\delta$, then $n(t)\in L^p(\R^d)$ for all $t\in [0,T)$ and
\begin{align}\label{Lp-n-2}
\|n(t)\|_{L^p(\R^d)}\leq C=C(p,T,\|c_0\|_{L^{\infty}(\R^d)},\|n_0\|_{L^p(\R^d)}).
\end{align}
\end{lemma}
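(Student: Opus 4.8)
The plan is to run an $L^p$ energy estimate directly on the cell density. For $p=1$ there is nothing to do (it is the mass conservation of Lemma \ref{mass}), so take $p>1$. Multiplying \eqref{EQ}$_1$ by $pn^{p-1}$, integrating by parts and using $\na\cd\bfu=0$ gives
\[
\f{d}{dt}\|n\|_{L^p(\R^d)}^p + \f{4(p-1)}{p}\|\na n^{p/2}\|_{L^2(\R^d)}^2 = 2(p-1)\int_{\R^d}\chi(c)\,n^{p/2}\na c\cd\na n^{p/2} - 2(p-1)\int_{\R^d}n^{p/2}\na\phi\cd\na n^{p/2}.
\]
The potential term is harmless: by assumption (\textbf{B}) and Young's inequality it is at most $\f{p-1}{2p}\|\na n^{p/2}\|_{L^2}^2 + C(p)\|\na\phi\|_{L^\infty}^2\|n\|_{L^p}^p$, the first part absorbed by the dissipation and the second fed into Gronwall. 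All the difficulty is in the chemotactic term, where $|\chi(c)|$ is only bounded (by $\mathcal C_\chi$), not small.

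To let the smallness of $c_\infty:=\|c_0\|_{L^\infty}$ do its work, the main ingredient I would use is the elementary bound
\[
\|\na c\|_{L^4(\R^d)}^4 \le C_d\,\|c\|_{L^\infty(\R^d)}^2\,\|\na^2 c\|_{L^2(\R^d)}^2,
\]
obtained by writing $\int|\na c|^4 = -\int c\,\na\cd(|\na c|^2\na c)$, integrating by parts, and absorbing $\|\na c\|_{L^4}^4$; combined with the maximum principle $\|c(t)\|_{L^\infty}\le c_\infty$ from Lemma \ref{mass} this gives $\|\na c\|_{L^4}^2 \le C c_\infty\|\na^2 c\|_{L^2}$. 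Then by H\"older and Gagliardo--Nirenberg ($\|n^{p/2}\|_{L^4}^2 \le C\|n\|_{L^p}^{p(4-d)/4}\|\na n^{p/2}\|_{L^2}^{d/2}$),
\[
\int_{\R^d}n^p|\na c|^2 \le \|n^{p/2}\|_{L^4}^2\|\na c\|_{L^4}^2 \le C c_\infty\,\|n\|_{L^p}^{p(4-d)/4}\,\|\na n^{p/2}\|_{L^2}^{d/2}\,\|\na^2 c\|_{L^2},
\]
and Young's inequality turns the chemotactic term into $\e\|\na n^{p/2}\|_{L^2}^2$ plus a contribution proportional to a power of $c_\infty$ times $\|n\|_{L^p}^{p}\|\na^2 c\|_{L^2}^{4/(4-d)}$. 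To absorb that last piece I would pair the $\|n\|_{L^p}^p$ inequality with the standard estimate for $\|\na c\|_{L^2}^2$ obtained by testing \eqref{EQ}$_2$ against $-\Del c$, which supplies the dissipation $\|\na^2 c\|_{L^2}^2$ and whose right-hand side is, using $|f(c)|\le\|f'\|_{C^0[0,c_\infty]}c_\infty$ (recall $f(0)=0$) together with the basic $L^2$ estimate for $\bfu$, controlled by $c_\infty$-small multiples of $\|n\|_{L^2}^2$ and $\|\na\bfu\|_{L^2}^2$; if needed, one may also substitute $\Del c$ from \eqref{EQ}$_2$ inside the chemotactic term itself, which after integration by parts exposes the favourable sign $-C(p)\int\chi(c)^2 n^p|\na c|^2\le0$ and only $c_\infty$-small multiples of $\|\na n^{p/2}\|_{L^2}^2$ and $\|n\|_{L^p}^p$.

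Assembling these into one functional $\mathcal F_p(t)=\|n(t)\|_{L^p}^p+\lambda\|\na c(t)\|_{L^2}^2+\cdots$ with $\lambda=\lambda(p)$ large and $\delta=\delta(p)$ small enough that every critical term falls below the available dissipation, Gronwall's inequality yields \eqref{Lp-n-2} with a constant allowed to grow in $T$. The only genuinely delicate point — and the main obstacle — is this closing step: the threshold $\delta(p)$ has to be fixed \emph{before} the estimate is run, so no bad term may secretly require a bound on the very quantity $\|n\|_{L^p}$ being controlled. The clean way to arrange this is an induction on the integrability exponent, bootstrapping from $p=1$ (Lemma \ref{mass}) upwards in small increments: at each stage the already-known lower $L^q$ bound on $n$ provides exactly the room to interpolate $\|n^{p/2}\|_{L^4}$ so that $\|\na n^{p/2}\|_{L^2}$ enters with exponent strictly below $2$ and $\|\na^2 c\|_{L^2}$ with exponent at most $2$ and a $c_\infty$-small prefactor carrying no residual power of $\|n\|_{L^p}$. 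The dimension $d\in\{2,3\}$ enters only through the Gagliardo--Nirenberg exponents, and both cases are handled uniformly.
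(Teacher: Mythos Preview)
Your closing step does not actually close, and this is not just a matter of care with constants. After Young's inequality your chemotactic remainder is, as you say, $C\,c_\infty^\alpha\|n\|_{L^p}^p\|\na^2 c\|_{L^2}^{4/(4-d)}$. In $d=3$ this is $\|\na^2 c\|_{L^2}^{4}$, which the dissipation $\|\na^2 c\|_{L^2}^2$ coming from the $c$–equation cannot absorb without an a~priori bound on $\|\na^2 c\|_{L^2}^2$ that you do not have. In $d=2$ it is $c_\infty^2\|n\|_{L^p}^p\|\na^2 c\|_{L^2}^2$, and to absorb it by $\lambda\|\na^2 c\|_{L^2}^2$ you need $c_\infty^2\|n\|_{L^p}^p\le \lambda/2$, which is exactly the quantity being estimated; at best this forces $\delta$ to depend on $\|n_0\|_{L^p}$, not just on $p$ as the lemma asserts. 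The induction on the integrability exponent does not help: if $\|n\|_{L^q}$ is known, the Gagliardo--Nirenberg interpolation of $\|n^{p/2}\|_{L^4}$ only pushes the exponent on $\|\na n^{p/2}\|_{L^2}$ below $2$ when $q>3p/2$ in $d=3$, so starting from $q=1$ you never reach any $p>1$. Your fallback of ``substituting $\Delta c$'' and finding a favourable sign $-\int \chi^2 n^p|\na c|^2$ is not substantiated; writing out $-(p-1)\int n^p\chi(c)\Delta c$ with $\Delta c=c_t+\bfu\!\cdot\!\na c+nf(c)$ produces a $c_t$–term that ties the $n$– and $c$–evolutions together and does not by itself yield that sign.

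The paper avoids all of this with a single idea that you are missing: work with the weighted functional $\int_{\R^d} n^p\,g(c)$ for $g(c)=e^{(\beta c)^2}$. Differentiating in time brings in \emph{both} $\pa_t n$ and $\pa_t c$; the latter, through $\Delta c$, produces a \emph{new dissipative term} $+\int n^p g''(c)|\na c|^2$ on the left-hand side. Since $g''(c)\ge 2\beta^2 g(c)$, choosing $\beta^2\gtrsim p(p-1)\mathcal C_\chi^2$ makes this term dominate every bad $\int n^p|\na c|^2$ contribution from the chemotaxis \emph{pointwise in $c$}, with no appeal to $\|\na^2 c\|_{L^2}$ and no coupling to the $\bfu$–equation. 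The smallness of $c_\infty$ is needed only to kill the lower-order pieces $\beta^4c^2$ and $\beta^2 c\,\chi(c)$, so $\delta=\delta(p)$ depends on $p$ alone. What remains is a clean Gronwall on $\int n^p g(c)$, giving \eqref{Lp-n-2}.
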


\begin{remark}\label{remark-2}
As we said, Lemma \ref{Lp-n-1} is used to control the bad term $\chi(c)n\na c$ appeared in \eqref{EQ}$_3$. However, the boundedness of $n$ in space $L^p(\R^d)$ for all $p\in[1,+\infty)$ seems relatively strong from the perspective of the proof of Lemma \ref{entropy} below.
\end{remark}

\begin{proof}[Proof of Lemma \ref{Lp-n-1}]
The proof of this Lemma is much similar as the one in \cite[Proposition 2]{Chae14CPDE}. We give a sketch of the main steps below for completeness.

Firstly, let $g$ be a positive function satisfying $g'\ge 0$, which is to be determined later. Then one can easily deduce that
\begin{align}\label{Lp-n-3}
\f{d}{dt}\int_{\R^d}n^p g(c)
&=p\int_{\R^d}n^{p-1}g(c)\pa_t n   +  \int_{\R^d}n^p g'(c)\pa_t c   \nn \\
&=p\int_{\R^d}n^{p-1}g(c)\big(-\bfu\cd\na n+\Del n -\na\cd(\chi(c)n\na c) + \na\cd(n\na\phi)\big)   \nn \\
&\qquad  + \int_{\R^d}n^p g'(c)\big(-\bfu\cd\na c + \Del c -nf(c)\big).
\end{align}
Due to the divergence-free property of $\bfu$, we have that
\begin{align}\label{Lp-n-4}
-p\int_{\R^d}n^{p-1}g(c)\bfu\cd\na n  -   \int_{\R^d}n^p g'(c)\bfu\cd\na c
= -\int_{\R^d}g(c)\bfu\cd\na n^p  -   \int_{\R^d}n^p\bfu\cd\na g(c)     =0.
\end{align}
Substituting \eqref{Lp-n-4} into \eqref{Lp-n-3} and using integration by parts, we obtain
\begin{align}\label{Lp-n-4-1}
\f{d}{dt}\int_{\R^d}n^p g(c)
&= -p(p-1)\int_{\R^d}n^{p-2}g(c)|\na n|^2  -  2p\int_{\R^d}n^{p-1}g'(c)\na c\cd\na n   \nn \\
&\qquad   +  p(p-1)\int_{\R^d}n^{p-1}g(c)\chi(c)\na n\cd\na c  +  p\int_{\R^d}n^pg'(c)\chi(c)|\na c|^2   \nn \\
&\qquad  -p(p-1)\int_{\R^d}n^{p-1}g(c)\na n\cd\na\phi  -  p\int_{\R^d}n^pg'(c)\na c\cd\na\phi   \nn \\
&\qquad -\int_{\R^d}n^p g''(c)|\na c|^2   -  \int_{\R^d}n^{p+1}g'(c)f(c),
\end{align}
which implied by the nonnegativity of $n$, $g'$ and $f$ and the application of Young's inequality that
\begin{align}\label{Lp-n-5}
&\f{d}{dt}\int_{\R^d}n^p g(c)  + \f12p(p-1)\int_{\R^d}n^{p-2}g(c)|\na n|^2  +  \f12\int_{\R^d}n^p g''(c)|\na c|^2  + \int_{\R^d}n^{p+1}g'(c)f(c) \nn \\
& \leq \f{6p}{p-1}\int_{\R^d}n^p\f{(g'(c))^2}{g(c)}|\na c|^2   +  \f{3p(p-1)}{2}\int_{\R^d}n^p\chi^2(c)g(c)|\na c|^2
    +  p\int_{\R^d}n^pg'(c)\chi(c)|\na c|^2  \nn \\
&\qquad  + \f{3p(p-1)}{2}\int_{\R^d}n^p g(c)|\na\phi|^2  +  \f{p^2}{2}\int_{\R^d}n^p\f{(g'(c))^2}{g''(c)}|\na\phi|^2.
\end{align}
Setting $g(c)=e^{(\beta c)^2}$, and we aim at looking for $g(c)$ fulfilling
\[
\f{6p}{p-1}\f{(g'(c))^2}{g(c)} + \f{3p(p-1)}{2}\chi^2(c)g(c) + pg'(c)\chi(c)  \leq \f14g''(c),
\]
which is equivalent to
\begin{align}\label{Lp-n-6}
\f{24p}{p-1}\beta^4c^2 + \f{3p(p-1)}{2}\chi^2(c) + 2p\beta^2c\,\chi(c)
\leq \f12\beta^2 + \beta^4c^2.
\end{align}
To make \eqref{Lp-n-6} be satisfied, we can firstly choose $\beta$ such that
\begin{align}\label{Lp-n-6-1}
9p(p-1)\mathcal{C}_{\chi}^2 \leq \beta^2.
\end{align}
Then, due to $\|c_0\|_{L^{\infty}(\R^d)}$ is sufficiently small, we can thus find $\delta=\delta(p)$ satisfying that for all $\|c_0\|_{L^{\infty}(\R^d)}<\delta$,
\begin{align}\label{Lp-n-6-2}
\mathcal{C}_{\chi}\|c_0\|_{L^{\infty}(\R^d)} \leq \f{1}{12p}  \qquad \textrm{and} \qquad  \|c_0\|_{L^{\infty}(\R^d)}^2 \leq \f{p-1}{144p\beta^2}.
\end{align}
One can easily check that \eqref{Lp-n-6-1} and \eqref{Lp-n-6-2} are enough to ensure the validity of \eqref{Lp-n-6}. Thus, we infer from \eqref{Lp-n-5} and \eqref{Lp-n-6} that
\begin{align}\label{Lp-n-7}
&\f{d}{dt}\int_{\R^d}n^p g(c)  + \f12p(p-1)\int_{\R^d}n^{p-2}g(c)|\na n|^2  +  \f14\int_{\R^d}n^p g''(c)|\na c|^2   \nn \\
& \leq \,\, \f{3p(p-1)}{2}\int_{\R^d}n^p g(c)|\na\phi|^2  +  \f{p^2}{2}\int_{\R^d}n^p\f{(g'(c))^2}{g''(c)}|\na\phi|^2   \nn \\
&\leq  \Big(\f{3p(p-1)}{2}   + \f{p^2}{2}\max_{0\leq c \leq c_{\infty}}\f{(g'(c))^2}{g''(c)g(c)}\Big) \|\na\phi\|_{L^{\infty}(\R^d)}^2\int_{\R^d}n^p g(c).
\end{align}
Finally, the proof of \eqref{Lp-n-2} is immediately implied by Gronwall's inequality and $e^{(\beta c)^2}>1$.
\end{proof}

With \eqref{Lp-n-2} at hand, we now present the entropy functional inequality as follows.
\begin{lemma}\label{entropy}
Let $0<T<\infty$, $d=2,3$. Suppose that the assumptions $(\bf{A})$, $(\bf{B})$ and $(\bf{C})$ hold. If $\|c_0\|_{L^{\infty}(\R^d)}$ is suitably  small, then the solution $(n,c,\bfu)$ to the Cauchy problem \eqref{EQ}-\eqref{initial} satisfies the following inequality
\begin{align*}
& \int_{\R^d} n|\ln n|   +  \|\na c\|_{L^2(\R^d)}^2  +  \|\bfu\|_{L^2(\R^d)}^2   \nn \\
&\quad +  \f12\int_0^t\Big(\int_{\R^d}\f{|\na n|^2}{n}  + \|\na^2 c\|_{L^2(\R^d)}^2   +  \|\na\bfu\|_{L^2(\R^d)}^2 \Big) ds
\leq C
\end{align*}
for all $t\in(0,T)$, where $C$ depends on $T$, $\int_{\R^d}n_0|\ln n_0|$, $\int_{\R^d}n_0\langle x\rangle $, $\|\na c_0\|_{L^2(\R^d)}^2$ and $\|\bfu_0\|_{L^2(\R^d)}^2$.
\end{lemma}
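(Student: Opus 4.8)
The plan is to construct a Lyapunov-type functional
\[
\Phi(t):=\int_{\R^d} n\ln n \;+\; a_1\int_{\R^d} n\langle x\rangle \;+\; a_2\|\na c\|_{L^2(\R^d)}^2 \;+\; a_3\|\bfu\|_{L^2(\R^d)}^2,\qquad \langle x\rangle:=\sqrt{1+|x|^2},
\]
with constants $a_1,a_2,a_3>0$ to be fixed at the end, and to close a Grönwall inequality for $\Phi$, using the bound of Lemma \ref{Lp-n-1} to tame the strongly nonlinear force $\chi(c)n\na c$. I will combine four a priori identities obtained by testing the equations of \eqref{EQ} against $\ln n$, $\langle x\rangle$, $-\Del c$ and $\bfu$ respectively.

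Testing \eqref{EQ}$_1$ with $\ln n$ kills the drift by $\na\cd\bfu=0$, produces the dissipation $\int|\na n|^2/n$, and leaves $\int\chi(c)\na c\cd\na n-\int\na\phi\cd\na n$; writing $\na n=\sqrt n\,(\na n/\sqrt n)$ and using $\|n\|_{L^1}=\|n_0\|_{L^1}$, Young's inequality absorbs the $\na\phi$-term and reduces the chemotactic term to a multiple of $\int n|\na c|^2$. This quantity is the crux: by H\"older and Gagliardo--Nirenberg, $\int n|\na c|^2 \ls \|n\|_{L^{p_d}(\R^d)}\|\na^2 c\|_{L^2(\R^d)}\|\na c\|_{L^2(\R^d)}$ with $p_d=3$ if $d=3$ and $p_d=2$ if $d=2$, and Lemma \ref{Lp-n-1} (valid because $\|c_0\|_{L^\infty}$ is small) bounds $\|n\|_{L^{p_d}}$ by a constant, so $\int n|\na c|^2 \le \eta\|\na^2 c\|_{L^2}^2+C_\eta\|\na c\|_{L^2}^2$ for any $\eta>0$. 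Testing \eqref{EQ}$_2$ with $-\Del c$ reproduces the computation behind \eqref{blow:H1c}, namely $\f{d}{dt}\|\na c\|_{L^2}^2+\|\na^2 c\|_{L^2}^2\le 2c_{\infty}^2\|\na\bfu\|_{L^2}^2+C\|n\|_{L^2}^2$, and $\|n\|_{L^2}\le C$ again by Lemma \ref{Lp-n-1}. Testing \eqref{EQ}$_3$ with $\bfu$ annihilates the convective term and the pressure and leaves $-\int\bfu\cd n\na\phi+\int\chi(c)\,\bfu\cd n\na c$; the first is $\le\|\bfu\|_{L^2}^2+C\|n\|_{L^2}^2$, and the second, after estimating $\|n\na c\|_{L^2}^2\ls\|n\|_{L^{q_d}}^2\|\na c\|_{L^2}\|\na^2 c\|_{L^2}$ ($q_d=6$ or $4$) and applying Lemma \ref{Lp-n-1}, is $\le\eta\|\na^2 c\|_{L^2}^2+C_\eta(\|\bfu\|_{L^2}^2+\|\na c\|_{L^2}^2)$. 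Finally, testing \eqref{EQ}$_1$ with $\langle x\rangle$ and using $|\na\langle x\rangle|\le1$, boundedness of $\Del\langle x\rangle$, $|\chi(c)|\le\mathcal{C}_{\chi}$ and $\|n\|_{L^2}\le C$ yields $\f{d}{dt}\int n\langle x\rangle\le C(1+\|\na c\|_{L^2}^2+\|\bfu\|_{L^2}^2)$.

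Adding the four inequalities with suitable weights $a_1,a_2,a_3$, all the $\|\na^2 c\|_{L^2}^2$-contributions on the right can be absorbed into the dissipation by taking $\eta$ small relative to $a_2$, and the obstructing term $2a_2c_{\infty}^2\|\na\bfu\|_{L^2}^2$ is absorbed into $a_3\|\na\bfu\|_{L^2}^2$ provided $c_{\infty}=\|c_0\|_{L^\infty}$ is small enough; this leaves $\f{d}{dt}\Phi+\f12\int|\na n|^2/n+\f12\|\na^2 c\|_{L^2}^2+\|\na\bfu\|_{L^2}^2\le C(1+\|\na c\|_{L^2}^2+\|\bfu\|_{L^2}^2)$. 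To close this in $\Phi$ one invokes the elementary coercivity $\Phi(t)+C\ge\int n|\ln n|+\|\na c\|_{L^2}^2+\|\bfu\|_{L^2}^2\ge0$, a consequence of the splitting $\int n|\ln n|=\int n\ln n+2\int_{\{n\le1\}}n\ln\f1n$ together with $\int_{\{n\le1\}}n\ln\f1n\le\int_{\{n\ge e^{-|x|}\}}n|x|+\int_{\{n<e^{-|x|}\}}\sqrt n\le\int n\langle x\rangle+C$ (using $t\ln\f1t\le\sqrt t$ on $(0,1]$ and $\int_{\R^d}e^{-|x|/2}<\infty$); this is precisely why $a_1$ must be taken large enough relative to the constant in that split, and why $\int n_0\langle x\rangle$ enters the final constant. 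Grönwall's inequality then gives $\sup_{[0,T)}\Phi\le C$, hence $\sup_{[0,T)}\big(\int n|\ln n|+\|\na c\|_{L^2}^2+\|\bfu\|_{L^2}^2\big)\le C$, and integrating the differential inequality in time yields the asserted bound on $\int_0^t\big(\int|\na n|^2/n+\|\na^2 c\|_{L^2}^2+\|\na\bfu\|_{L^2}^2\big)$.

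The main obstacle throughout is exactly the cross-force $\chi(c)n\na c$ in \eqref{EQ}$_3$ and its companion $\chi(c)\na c\cd\na n$ in the entropy identity, which cannot be handled with only the conserved mass $\|n\|_{L^1}$; this is what forces the proof through the $L^p$-bound of Lemma \ref{Lp-n-1}, hence through the smallness of $\|c_0\|_{L^\infty}$, and what makes it essential that the coefficient $c_{\infty}^2$ in front of $\|\na\bfu\|_{L^2}^2$ be small enough for the absorption step to succeed.
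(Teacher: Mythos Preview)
Your proposal is correct and follows essentially the same architecture as the paper's proof: test \eqref{EQ}$_1$ against $\ln n$ (equivalently $1+\ln n$), \eqref{EQ}$_2$ against $-\Del c$, \eqref{EQ}$_3$ against $\bfu$, add the first-moment identity for $n\langle x\rangle$, invoke Lemma~\ref{Lp-n-1} to bound $\|n\|_{L^p}$, absorb $c_\infty^2\|\na\bfu\|_{L^2}^2$ using smallness of $\|c_0\|_{L^\infty}$, and close via Gr\"onwall together with the standard coercivity splitting of $\int n|\ln n|$. The only differences are cosmetic: where you interpolate $\int n|\na c|^2\lesssim\|n\|_{L^{p_d}}\|\na^2 c\|_{L^2}\|\na c\|_{L^2}$ (with $p_d=3,2$) and $\|n\na c\|_{L^2}^2\lesssim\|n\|_{L^{q_d}}^2\|\na^2 c\|_{L^2}\|\na c\|_{L^2}$ (with $q_d=6,4$), the paper instead uses $\|\sqrt n\|_{L^4}^2\|\na c\|_{L^4}^2\le C\|n\|_{L^2}\|\na^2 c\|_{L^2}\|c\|_{L^\infty}$ and $\|\bfu\|_{L^4}\|n\|_{L^4}\|\na c\|_{L^2}$, so it calls Lemma~\ref{Lp-n-1} only at $p=2,4$ rather than your $p=2,3,4,6$; either choice is harmless since Lemma~\ref{Lp-n-1} covers all finite $p$.
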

\begin{proof}
Multiplying \eqref{EQ}$_1$ by $1+\ln n$, using the integration by parts, and applying Gagliardo-Nirenberg inequality, one has
\begin{align*}
&\f{d}{dt}\int_{\R^d}n\ln n   +  \int_{\R^d}\f{|\na n|^2}{n}  \nn \\
&=  \int_{\R^d}\chi(c)\na c\cd\na n   -  \int_{\R^d}\na \phi \cd\na n\nn \\
&\leq \f12\int_{\R^d}\f{|\na n|^2}{n}   +  \mathcal{C}_{\chi}^2\|\sqrt{n}\|_{L^4(\R^d)}^2\|\na c\|_{L^4(\R^d)}^2
   +  \|\na\phi\|_{L^{\infty}(\R^d)}^2\|\sqrt{n}\|_{L^2(\R^d)}^2   \nn \\
&\leq \f12\int_{\R^d}\f{|\na n|^2}{n}   +  C\|n\|_{L^2(\R^d)}\|\na^2c\|_{L^2(\R^d)}\|c\|_{L^{\infty}(\R^d)}
   +  C\|n\|_{L^1(\R^d)} \nn \\
&\leq \f12\int_{\R^d}\f{|\na n|^2}{n}   +  \f12\|\na^2c\|_{L^2(\R^d)}^2   +  C\|n\|_{L^2(\R^d)}^2\|c\|_{L^{\infty}(\R^d)}^2  +  C\|n\|_{L^1(\R^d)},
\end{align*}
which implies that for suitably small $c_0$,
\begin{align}\label{logn-1}
\f{d}{dt}\int_{\R^d}n\ln n   +  \f12\int_{\R^d}\f{|\na n|^2}{n}
\leq  \f12\|\na^2c\|_{L^2(\R^d)}^2   +   C,
\end{align}
where we have used Lemma \ref{mass} and the boundedness of $\|n\|_{L^2(\R^d)}$ in \eqref{Lp-n-2} with $p=2$.

Similarly, taking the $L^2$-inner product of \eqref{EQ}$_2$ with $-\Del c$, we deduce that
\begin{align}\label{3D-H1c-1}
\f12\f{d}{dt}\|\na c\|_{L^2(\R^d)}^2  +  \|\na^2 c\|_{L^2(\R^d)}^2
&= \int_{\R^d}\Del c\, \bfu\cd\na c    +  \int_{\R^d}\Del c\, n \,f(c)  \nn \\
&= \int_{\R^d}(\na^2c: \na\bfu)\,c    +  \int_{\R^d}\Del c\, n \,f(c)  \nn \\
&\leq \f12\|\na^2c\|_{L^2(\R^d)}^2 + \|c\|_{L^{\infty}(\R^d)}^2\|\na \bfu\|_{L^2(\R^d)}^2  +  \mathcal{C}_{f}^2\|n\|_{L^2(\R^d)}^2.
\end{align}
Putting \eqref{logn-1} and \eqref{3D-H1c-1} together, it shows that
\begin{align}\label{logn+H1c}
& \f{d}{dt}\lt(\int_{\R^d} n\ln n  +  \|\na c\|_{L^2(\R^d)}^2\rt)  +  \f12\int_{\R^d}\f{|\na n|^2}{n}  +  \f12\|\na^2 c\|_{L^2(\R^d)}^2  \nonumber \\
& \le  2\|c_0\|_{L^{\infty}(\R^d)}^2\|\na \bfu\|_{L^2(\R^d)}^2  +  C,
\end{align}
where we have used Lemma \ref{Lp-n-1} with $p=2$ again.

In order to deal with the first term on the right-hand-side of \eqref{logn+H1c}, we test \eqref{EQ}$_3$ against $\bfu$ and apply Gagliardo-Nirenberg inequality to deduce that
\begin{align}\label{3D-L2u-1}
&\f12\f{d}{dt}\|\bfu\|_{L^2(\R^d)}^2  +  \|\na\bfu\|_{L^2(\R^d)}^2  \nn \\
&=-\int_{\R^d}\bfu\cd n\na\phi   +  \int_{\R^d}\bfu\cd\chi(c)n\na c   \nn \\
&\leq \|\bfu\|_{L^2(\R^d)}\|n\|_{L^{2}(\R^d)}\|\na\phi\|_{L^{\infty}(\R^d)}
   +  \mathcal{C}_{\chi}\|\bfu\|_{L^4(\R^d)}\|n\|_{L^4(\R^d)}\|\na c\|_{L^2(\R^d)}  \nn \\
&\leq C\|\bfu\|_{L^2(\R^d)}\|n\|_{L^{2}(\R^d)}
   +  C\|\na\bfu\|_{L^2(\R^d)}^{\f{d}{4}} \|\bfu\|_{L^2(\R^d)}^{\f{4-d}{4}} \|n\|_{L^4(\R^d)}\|\na c\|_{L^2(\R^d)}  \nn \\
&\leq \f12\|\na\bfu\|_{L^2(\R^d)}^2  +  C\|\bfu\|_{L^2(\R^d)}^2 +  C\|n\|_{L^{2}(\R^d)}^2  +  C\|n\|_{L^4(\R^d)}^2\|\na c\|_{L^2(\R^d)}^2  \nn \\
&\leq \f12\|\na\bfu\|_{L^2(\R^d)}^2  +  C\|\bfu\|_{L^2(\R^d)}^2 +  C +  C\|\na c\|_{L^2(\R^d)}^2,
\end{align}
where we have used the boundedness presented in Lemma \ref{Lp-n-1} with $p=2$ and $p=4$ respectively. Combining \eqref{logn+H1c} and \eqref{3D-L2u-1}, using the smallness assumption on $\|c_0\|_{L^{\infty}(\R^d)}$, we obtain
\begin{align}\label{logn+H1c+L2u}
& \f{d}{dt}\lt(\int_{\R^d} n\ln n  +  \|\na c\|_{L^2(\R^d)}^2  +  \|\bfu\|_{L^2(\R^d)}^2\rt)
  +  \f12\lt(\int_{\R^d}\f{|\na n|^2}{n}  +  \|\na^2 c\|_{L^2(\R^d)}^2   +  \|\na\bfu\|_{L^2(\R^d)}^2 \rt) \nn \\
& \leq C\lt(\|\na c\|_{L^2(\R^d)}^2  + \|\bfu\|_{L^2(\R^d)}^2 \rt) +  C.
\end{align}

To bound the possible negative part of $\int_{\R^d}n\ln n$, we now devote ourselves to explore the evolution estimate of the first-order spatial moment of $n$ by testing \eqref{EQ}$_1$ against $\langle x\rangle:=\sqrt{1+|x|^2}$ and using the integration by parts, and obtain
\begin{align}\label{n-moment-1}
\f{d}{dt}\int_{\R^d} n\langle x\rangle
&= \int_{\R^d}n\bfu\cd\na \langle x\rangle  +  \int_{\R^d}n\Del \langle x\rangle  +  \int_{\R^d}\chi(c)n\na c \cd \na\langle x\rangle
  -  \int_{\R^d}n\na\phi\cd\na\langle x\rangle  \nn \\
&\leq \|n\|_{L^2(\R^d)}\|\bfu\|_{L^2(\R^d)}\|\na\langle x\rangle\|_{L^{\infty}(\R^d)}  +  \|n\|_{L^1(\R^d)}\|\Del\langle x\rangle\|_{L^{\infty}(\R^d)}   \nn \\
& \qquad +  \mathcal{C}_{\chi}\|n\|_{L^2(\R^d)}\|\na c\|_{L^2(\R^d)}\|\na \langle x\rangle\|_{L^{\infty}(\R^d)}
   +  \|n\|_{L^1(\R^d)}\|\na \phi\|_{L^{\infty}(\R^d)}\|\na\langle x\rangle\|_{L^{\infty}(\R^d)}   \nn \\
&\leq C\lt(\|\na c\|_{L^2(\R^d)}^2  +  \|\bfu\|_{L^2(\R^d)}^2\rt) +  C,
\end{align}
where we have used Lemma \ref{mass}, Lemma \ref{Lp-n-1} with $p=2$ and the facts that $\|\na\langle x\rangle\|_{L^{\infty}(\R^d)}$ and $\|\Del\langle x\rangle\|_{L^{\infty}(\R^d)}$ are bounded by the definition of $\langle x\rangle$. Hence, a linear combination of \eqref{logn+H1c+L2u} and \eqref{n-moment-1} gives that
\begin{align}\label{logn+H1c+L2u+momn-1}
& \f{d}{dt}\Big(\int_{\R^d} n\ln n  +  \int_{\R^d} 2n\langle x\rangle  +  \|\na c\|_{L^2(\R^d)}^2  +  \|\bfu\|_{L^2(\R^d)}^2\Big)  \nn \\
&\qquad +  \f12\Big(\int_{\R^d}\f{|\na n|^2}{n}  +  \|\na^2 c\|_{L^2(\R^d)}^2   +  \|\na\bfu\|_{L^2(\R^d)}^2\Big)   \nn \\
&\leq C\lt(\|\na c\|_{L^2(\R^d)}^2  +  \|\bfu\|_{L^2(\R^d)}^2\rt) +  C.
\end{align}
By same reasoning for obtaining (2.27) in \cite{Duan2017JDE}, we can easily get at
\begin{align}\label{logn-2}
\int_{\R^d}\Big(n\ln n  + 2 n\langle x\rangle\Big)
\geq \int_{\R^d}n|\ln n|   -  4e^{-1}\int_{\R^d}e^{-\f12\langle x\rangle}.
\end{align}
Substituting \eqref{logn-2} into \eqref{logn+H1c+L2u+momn-1}, one can see that
\begin{align}\label{logn+H1c+L2u+momn-2}
& \f{d}{dt}\Big(\int_{\R^d} n\ln n  +  \int_{\R^d}2 n \langle x\rangle +  \|\na c\|_{L^2(\R^d)}^2  +  \|\bfu\|_{L^2(\R^d)}^2\Big)   \nn \\
&\qquad    +  \f12\Big(\int_{\R^d}\f{|\na n|^2}{n}  +  \|\na^2 c\|_{L^2(\R^d)}^2   +  \|\na\bfu\|_{L^2(\R^d)}^2 \Big)  \nn \\
&\leq C\Big(\int_{\R^d}n|\ln n|  +  \|\na c\|_{L^2(\R^d)}^2  +  \|\bfu\|_{L^2(\R^d)}^2\Big) +  C  \nn \\
&\leq C\Big(\int_{\R^d} n\ln n  +  \int_{\R^d}2 n\langle x\rangle   +  \|\na c\|_{L^2(\R^d)}^2  +  \|\bfu\|_{L^2(\R^d)}^2\Big)
  +  Ce^{-1}\int_{\R^d}e^{-\f12\langle x\rangle} + C,
\end{align}
which implied by Gronwall's inequality that
\begin{align}\label{logn+H1c+L2u+momn-3}
& \int_{\R^d} n\ln n  +  \int_{\R^d}2n\langle x\rangle   +  \|\na c\|_{L^2(\R^d)}^2  +  \|\bfu\|_{L^2(\R^d)}^2   \nn \\
&\qquad +  \f12\int_0^t\Big(\int_{\R^d}\f{|\na n|^2}{n}  +  \|\na^2 c\|_{L^2(\R^d)}^2   +  \|\na\bfu\|_{L^2(\R^d)}^2\Big) ds
\leq C(T)
\end{align}
for all $t\in(0,T)$. Using the inequality \eqref{logn-2} again, we can finally conclude from \eqref{logn+H1c+L2u+momn-3} that
\begin{align*}
& \int_{\R^d} n|\ln n|   +  \|\na c\|_{L^2(\R^d)}^2  +  \|\bfu\|_{L^2(\R^d)}^2   \nn \\
&\qquad +  \f12\int_0^t\Big(\int_{\R^d}\f{|\na n|^2}{n}  +  \|\na^2 c\|_{L^2(\R^d)}^2   +  \|\na\bfu\|_{L^2(\R^d)}^2 \Big)ds
\leq \widetilde{C}
\end{align*}
for all $t\in(0,T)$, where $\widetilde{C}$ depends on $T$, $\int_{\R^d}n_0|\ln n_0|$, $\int_{\R^d}n_0\langle x\rangle $, $\|\na c_0\|_{L^2(\R^d)}^2$ and $\|\bfu_0\|_{L^2(\R^d)}^2$. This completes the proof of Lemma \ref{entropy}.
\end{proof}

Next, with the above bound estimates obtained in Lemma \ref{Lp-n-1} and Lemma \ref{entropy} at hand, which are uniform until the maximal time of existence, we can proceed to prove that system \eqref{EQ}-\eqref{initial} in $\R^3$ with $\kappa=0$ and in $\R^2$ with $\kappa\in\R$ admits a unique regular solution globally in time with the help of the blow-up criteria showed in Theorem \ref{blowup1} and Theorem \ref{blowup2}.

\begin{proof}[Proof of Theorem \ref{Stokes}]

As is showed in \eqref{blow-1-1} of Theorem \ref{blowup1}, for the proof of global existence of regular solutions in $\R^3$ when $\kappa=0$, it is sufficient to verify that $\na c\in L^{s_1}(0,T^*;L^{r_1}(\R^3))$ for some $(r_1, s_1)$ satisfying $\f{3}{r_1}+\f{2}{s_1}\leq 1$ and $3<r_1\leq +\infty$. Here, we consider the case of $r_1=s_1=5$. Notice by using the Gagliardo-Nirenberg inequality that
\begin{align}\label{Stokes-L5c}
\|\na c\|_{L^5(\R^3)}^5 \leq C\|\na^2c\|_{L^2(\R^3)}^{4}\|c\|_{L^{\infty}(\R^3)}.
\end{align}
Thus, we only need to verify that $\na^2c\in L^{\infty}(0,T^*;L^2(\R^3))$ due to $\|c\|_{L^{\infty}(\R^3)}\leq \|c_0\|_{L^{\infty}(\R^3)}$.

For suitably small $c_0$, we use Lemma \ref{Lp-n-1} to obtain
\begin{align}\label{Stokes-Lpn}
\|n(t)\|_{L^p(\R^3)}\leq C \qquad  \textrm{for} \,\, 1\leq p<\infty
\end{align}
and Lemma \ref{entropy} to see
\begin{align}\label{Stokes-entropy}
& \int_{\R^3} n|\ln n|   +  \|\na c\|_{L^2(\R^3)}^2  +  \|\bfu\|_{L^2(\R^3)}^2   \nn \\
&\qquad +  \f12\int_0^t\lt(\int_{\R^3}\f{|\na n|^2}{n}  + \|\na^2 c\|_{L^2(\R^3)}^2   +  \|\na\bfu\|_{L^2(\R^3)}^2\rt) ds
\leq C,
\end{align}
where $C$ depends on $T^*$, $\int_{\R^3}n_0|\ln n_0|$, $\int_{\R^3}n_0\langle x\rangle $, $\|\na c_0\|_{L^2(\R^3)}^2$ and  $\|\bfu_0\|_{L^2(\R^3)}^2$.

On the other hand, multiplying $-\Del\bfu$ to both side of \eqref{EQ}$_3$ and using H\"{o}lder's inequality and Sobolev's embedding, we have
\begin{align*}
&\f12\f{d}{dt}\|\na\bfu\|_{L^2(\R^3)}^2  +  \|\na^2 \bfu\|_{L^2(\R^3)}^2   \\
&=\int_{\R^3}\Del\bfu\cd n\na\phi  -  \int_{\R^3}\Del\bfu\cd\chi(c)n\na c    \\
&\leq \f12\|\Del \bfu\|_{L^2(\R^3)}^2   +\|n\|_{L^2(\R^3)}^2\|\na\phi\|_{L^{\infty}(\R^3)}^2
   +  \mathcal{C}_{\chi}^2\|n\|_{L^3(\R^3)}^2\|\na c\|_{L^6(\R^3)}^2   \\
&\leq \f12 \|\na^2 \bfu\|_{L^2(\R^3)}^2   +  C\|n\|_{L^2(\R^3)}^2   +  C\|n\|_{L^3(\R^3)}^2\|\na^2 c\|_{L^2(\R^3)}^2,
\end{align*}
which implied by the boundedness in \eqref{Stokes-Lpn}, \eqref{Stokes-entropy}, and integration with respect to time that
\begin{align}\label{Stokes-H1u-1}
\sup_{0\leq t \leq T^*}\|\na\bfu\|_{L^2(\R^3)}^2  +  \int_0^{T^*} \|\na^2\bfu\|_{L^2(\R^3)}^2 dt
\leq C  +  C\int_0^{T^*}\|\na^2c\|_{L^2(\R^3)}^2dt  \leq C.
\end{align}

We now verify $\na^2c\in L^{\infty}(0,T^*;L^2(\R^3))$. Indeed, by applying $\na^2$ to \eqref{EQ}$_2$ and testing the resulting equation against $\na^2c$, we can deduce that
\begin{align*}
&\f12\f{d}{dt}\|\na^2c\|_{L^2(\R^3)}^2 +  \|\na^3c\|_{L^2(\R^3)}^2  \nn \\
&\leq C\|\na^3c\|_{L^2(\R^3)}\Big(\|\na\bfu\|_{L^3(\R^3)}\|\na c\|_{L^6(\R^3)}  +  \|\bfu\|_{L^{\infty}(\R^3)}\|\na^2c\|_{L^2(\R^3)}  \nn \\
&\qquad    +  \|\na n\|_{L^2(\R^3)}  +  \|n\|_{L^3(\R^3)}\|\na c\|_{L^6(\R^3)}\Big)   \nn \\
&\leq \f12\|\na^3c\|_{L^2(\R^3)}^2  +  C\Big(\|\bfu\|_{H^2(\R^3)}^2 + \|n\|_{L^3(\R^3)}^2\Big)\|\na^2c\|_{L^2(\R^3)}^2  + C\|\na n\|_{L^2(\R^3)}^2
\end{align*}
and thus that
\begin{align}\label{Stokes-H2c-1}
\f{d}{dt}\|\na^2c\|_{L^2(\R^3)}^2 +  \|\na^3c\|_{L^2(\R^3)}^2
\leq C\Big(1+\|\bfu\|_{H^2(\R^3)}^2\Big)\|\na^2c\|_{L^2(\R^3)}^2  +  C\|\na n\|_{L^2(\R^3)}^2.
\end{align}
For the term on the rightmost of \eqref{Stokes-H2c-1}, one can infer from \eqref{Lp-n-7} that $\na n\in L^2(0,T^*;  L^2(\R^3))$ by using Gronwall's inequality and specially choosing $p=2$. This together with the boundedness in \eqref{Stokes-Lpn}-\eqref{Stokes-H1u-1} and Gronwall's inequality to \eqref{Stokes-H2c-1} gives that
\begin{align}\label{Stokes-H2c-2}
&\sup_{0\leq t \leq T^*}\|\na^2 c\|_{L^2(\R^3)}^2  +  \int_0^{T^*}\|\na^3c\|_{L^2(\R^3)}^2dt  \nn \\
&\leq C\exp\Big\{\int_0^{T^*}\Big(1+\|\bfu\|_{H^2(\R^3)}^2\Big)dt\Big\}\Big(\|\na^2 c_0\|_{L^2(\R^3)}^2  + \int_0^{T^*}\|\na n\|_{L^2(\R^3)}^2dt \Big)
\leq C.
\end{align}

Collecting all the above estimates obtained in \eqref{Stokes-L5c}-\eqref{Stokes-H2c-2}, we can finally conclude that $\na c\in L^5(0,T^*;L^5(\R^3))$, which violates the assertion in \eqref{blow-1-1} if $T^*<+\infty$. This completes the proof of Theorem \ref{Stokes}.
\end{proof}

\begin{proof}[Proof of Theorem \ref{2D-NS}]
The proof of this theorem is much simpler compared to the case in $\R^3$ since the blow-up criteria in Theorem \ref{blowup2} indicates that the boundedness $\int_0^{T^*}\|\na c\|_{L^4(\R^2)}^4<\infty$ is enough to extend the local solution to a global one.

Due to the smallness assumption on $\|c_0\|_{L^{\infty}(\R^2)}$, we have the same bound estimate in Lemma \ref{Lp-n-1} and same entropy inequality in Lemma \ref{entropy}, i.e.
\[
\|n(t)\|_{L^p(\R^2)}\leq C \qquad  \textrm{for} \quad 1\leq p<\infty
\]
and
\begin{align}\label{2D-entropy}
& \int_{\R^2} n|\ln n|   +  \|\na c\|_{L^2(\R^2)}^2  +  \|\bfu\|_{L^2(\R^2)}^2   \nn \\
&\quad +  \f12\int_0^t\Big(\int_{\R^2}\f{|\na n|^2}{n}  +  \|\na^2 c\|_{L^2(\R^2)}^2   +  \|\na\bfu\|_{L^2(\R^2)}^2 \Big)ds
\leq C,
\end{align}
where $C$ depends on $T^*$, $\int_{\R^2}n_0|\ln n_0|$, $\int_{\R^2}n_0\langle x\rangle$, $\|\na c_0\|_{L^2(\R^2)}^2$, $\|\bfu_0\|_{L^2(\R^2)}^2$.
Combining the estimate \eqref{2D-entropy} and the inequality
\[
\|\na c\|_{L^4(\R^2)}\leq C\|\na^2 c\|_{L^2(\R^2)}^{\f12}\|c\|_{L^{\infty}(\R^2)}^{\f12},
\]
we have
\[
\int_0^{T^*}\|\na c\|_{L^4(\R^2)}^4dt\leq \|c_0\|_{L^{\infty}(\R^2)}^2\int_0^{T^*}\|\na^2 c\|_{L^2(\R^2)}^2dt< \infty.
\]
This completes the proof of Theorem \ref{2D-NS}.
\end{proof}

\vspace{2mm}

\section{Decay rate estimates}\label{decayestimate}

In this section, based on the global existence of unique regular solution $(n,c,\bfu)$ established in Theorem \ref{Stokes} and Theorem \ref{2D-NS}, we further prove that the obtained regular solution decays in an explicit rate.
\begin{proof}[Proof of Theorem \ref{decay}]
We will split this proof into three steps.
\vspace{3mm}\\
\textbf{Step 1}: $L^p$-decay of $c$.

In case of $2\leq p<\infty$, we multiply \eqref{EQ}$_2$ by $pc^{p-1}$ and integrate the resulting equation to obtain
\begin{align}\label{decay:Lpc-1}
\f{d}{dt}\|c\|_{L^p(\R^d)}^p  +  \f{4(p-1)}{p}\|\na c^{\f{p}{2}}\|_{L^2(\R^d)}^2 \leq 0,
\end{align}
where we have used the nonnegativity of $n,c$ and $f$. Then, multiplying \eqref{decay:Lpc-1} by $(1+t)^{\ga_1}$ with $\gamma_1>0$ to be determined and integrating over $[0,t]$, we have
\begin{align}\label{decay:Lpc-2}
&(1+t)^{\ga_1}\|c(t)\|_{L^p(\R^d)}^p  +  \f{4(p-1)}{p}\int_0^t(1+s)^{\ga_1}\|\na c^{\f{p}{2}}(s)\|_{L^2(\R^d)}^2 ds   \nn \\
&\le \|c_0\|_{L^p(\R^d)}^p  +  \ga_1\int_0^t(1+s)^{\ga_1-1}\|c(s)\|_{L^p(\R^d)}^p ds.
\end{align}
In order to deal with the last term on the right-hand-side of \eqref{decay:Lpc-2}, we use the Gagliardo-Nirenberg inequality to obtain
\begin{align}\label{decay:Lpn-8}
\|c(s)\|_{L^p(\R^d)}^p=\|c^{\f{p}{2}}(s)\|_{L^2(\R^d)}^2
\leq C\lt\|\na c^{\f{p}{2}}(s)\rt\|_{L^2(\R^d)}^{2\cd\f{d(p-1)}{d(p-1)+2}}  \lt\|c^{\f{p}{2}}(s)\rt\|_{L^{\f{2}{p}}(\R^d)}^{2\cd\f{2}{d(p-1)+2}},
\end{align}
which together with Young's inequality gives that
\begin{align}\label{decay:Lpc-3}
&\ga_1\int_0^t(1+s)^{\ga_1-1}\|c(s)\|_{L^p(\R^d)}^p ds   \nn \\
&\leq C\int_0^t(1+s)^{\ga_1-1} \lt\|\na c^{\f{p}{2}}(s)\rt\|_{L^2(\R^d)}^{2\cd\f{d(p-1)}{d(p-1)+2}}
     \lt\|c(s)\rt\|_{L^1(\R^d)}^{\f{2p}{d(p-1)+2}}ds   \nn\\
&\leq \f{2(p-1)}{p}\int_0^t(1+s)^{\ga_1}\|\na c^{\f{p}{2}}(s)\|_{L^2(\R^d)}^2 ds
   + C\int_0^t(1+s)^{\ga_1-\f{d}{2}(p-1)-1}\|c(s)\|_{L^1(\R^d)}^p ds  \nn \\
&\leq \f{2(p-1)}{p}\int_0^t(1+s)^{\ga_1}\|\na c^{\f{p}{2}}(s)\|_{L^2(\R^d)}^2 ds
   +C\|c_0\|_{L^1(\R^d)}^p(1+t)^{\ga_1-\f{d}{2}(p-1)}
\end{align}
whenever $\ga_1\geq\f{d}{2}(p-1)$, where we have used the fact that $\|c(t)\|_{L^1(\R^d)}\leq \|c_0\|_{L^1(\R^d)}$. In particular, taking $\ga_1=\f{d}{2}(p-1)$, we can obtain \eqref{decay:c} by substituting \eqref{decay:Lpc-3} into \eqref{decay:Lpc-2}.

The case $1\leq p<2$ can be proved by the interpolation inequality
\[
\|c(t)\|_{L^p(\R^d)}
\leq \lt\|c(t)\rt\|_{L^1(\R^d)}^{\f{2-p}{p}}\lt\|c(t)\rt\|_{L^2(\R^d)}^{\f{2p-2}{p}}
\leq C\big(\|c_0\|_{L^1(\R^d)\cap L^2(\R^d)}\big)(1+t)^{-\f{d}{2}(1-\f{1}{p})}.
\]
\vspace{1mm}

\noindent
\textbf{Step 2}: $L^p$-decay of $n$.

Due to the lack of a uniform bound on $\|\na c\|_{H^1(\R^d)}$ with respect to time, we will reestimate the right hand side of \eqref{Lp-n-4-1}. Indeed, by  the nonnegativity of $f$, $g$, $g'$ and $g''$, we can deduce from   \eqref{Lp-n-4-1}  that
\begin{align}\label{decay:Lpn}
&\f{d}{dt}\int_{\R^d}n^p g(c)
+p(p-1)\int_{\R^d}n^{p-2}g(c)|\na n|^2 + \int_{\R^d}n^p g''(c)|\na c|^2    \nn\\
&\leq-  2p\int_{\R^d}n^{p-1}g'(c)\na c\cd\na n     +  p(p-1)\int_{\R^d}n^{p-1}g(c)\chi(c)\na n\cd\na c \nn \\
&\qquad  +  p\int_{\R^d}n^pg'(c)\chi(c)|\na c|^2    -p(p-1)\int_{\R^d}n^{p-1}g(c)\na n\cd\na\phi  -  p\int_{\R^d}n^pg'(c)\na c\cd\na\phi.
\end{align}
For the fourth term on the right-hand-side of \eqref{decay:Lpn}, we use Young's inequality to see that
\begin{align}\label{decay:Lpn-1}
&-p(p-1)\int_{\R^d}n^{p-1}g(c)\na n\cd\na\phi   \nn \\
&\leq \f{1}{16}p(p-1)\int_{\R^d}n^{p-2}g(c)|\na n|^2  +  4p(p-1)\max_{0\leq c\leq c_{\infty}}g(c)\int_{\R^d}|n^{\f{p}{2}}\na\phi|^2.
\end{align}
Since
\begin{equation*}
\om(x)=\left\{
\begin{split}
&|x|,  \qquad x\in\R^3, \\
&(1+|x|)(1+\ln(1+|x|)), \quad x\in\R^2,
\end{split}
\right.
\end{equation*}
it follows from Hardy's inequality (see \cite{Duan2010CPDE}) that
\begin{equation}\label{Hardy}
\int_{\R^d}\f{|n|^2}{|\om(x)|^2} \leq C\int_{\R^d}|\na n|^2.
\end{equation}
Recalling $\mathcal{M}_{\om\phi}:=\sup_{x\in\R^d}\big(|\om(x)|\,|\na\phi(x)|\big)^2$ is small enough, we can bound the term on the rightmost of \eqref{decay:Lpn-1} as
\begin{align}\label{decay:Lpn-2}
&4p(p-1)\max_{0\leq c\leq c_{\infty}}g(c)\int_{\R^d}|n^{\f{p}{2}}\na\phi|^2   \nn\\
&=4p(p-1)\max_{0\leq c\leq c_{\infty}}g(c)\int_{\R^d}\Big|\f{n^{\f{p}{2}}}{\om(x)}\Big|^2\big(|\om(x)|\,|\na\phi(x)|\big)^2   \nn\\
&\leq C\mathcal{M}_{\om\phi}\max_{0\leq c\leq c_{\infty}}g(c)\int_{\R^d}|\na n^{\f{p}{2}}|^2   \nn \\
&\leq C\mathcal{M}_{\om\phi}\max_{0\leq c\leq c_{\infty}}g(c)\max_{0\leq c\leq c_{\infty}}\lt(\f{1}{g(c)}\rt)
   \int_{\R^d}n^{p-2}g(c)|\na n|^2   \nn \\
&\leq \f{1}{16} p(p-1)\int_{\R^d}n^{p-2}g(c)|\na n|^2
\end{align}
and thus
\begin{align}\label{decay:Lpn-re1}
&-p(p-1)\int_{\R^d}n^{p-1}g(c)\na n\cd\na\phi
\leq \f{1}{8}p(p-1)\int_{\R^d}n^{p-2}g(c)|\na n|^2.
\end{align}
For the fifth term on the right-hand-side of \eqref{decay:Lpn}, we can use \eqref{decay:Lpn-2} to obtain
\begin{align}\label{decay:Lpn-3}
&-  p\int_{\R^d}n^pg'(c)\na c\cd\na\phi   \nn \\
&\leq \f14\int_{\R^d} n^pg''(c)|\na c|^2
  +  p^2\int_{\R^d} \f{(g'(c))^2}{g''(c)}|n^{\f{p}{2}}\na\phi|^2  \nn \\
&\leq \f14\int_{\R^d} n^pg''(c)|\na c|^2
  + C\mathcal{M}_{\om\phi}\max_{0\leq c\leq c_{\infty}}\lt(\f{(g'(c))^2}{g''(c)}\rt)\max_{0\leq c\leq c_{\infty}}\lt(\f{1}{g(c)}\rt)\int_{\R^d}n^{p-2}g(c)|\na n|^2  \nn \\
&\leq \f14\int_{\R^d} n^pg''(c)|\na c|^2 + \f{1}{8} p(p-1)\int_{\R^d}n^{p-2}g(c)|\na n|^2
\end{align}
whenever $\mathcal{M}_{\om\phi}$ is suitably small. Substituting \eqref{decay:Lpn-re1} and \eqref{decay:Lpn-3} into \eqref{decay:Lpn} and using Young's inequality, we have
\begin{align}\label{decay:Lpn-4}
&\f{d}{dt}\int_{\R^d}n^p g(c)  + \f12p(p-1)\int_{\R^d}n^{p-2}g(c)|\na n|^2  +  \f34\int_{\R^d}n^p g''(c)|\na c|^2   \nn \\
& \leq \f{8p}{p-1}\int_{\R^d}n^p\f{(g'(c))^2}{g(c)}|\na c|^2   +  2p(p-1)\int_{\R^d}n^p\chi^2(c)g(c)|\na c|^2
    +  p\int_{\R^d}n^pg'(c)\chi(c)|\na c|^2.
\end{align}
Similar as in \eqref{Lp-n-6}-\eqref{Lp-n-6-2}, we take  $g(c)=e^{(\beta c)^2}$ and aim to have
\begin{align*}
\f{8p}{p-1}\f{(g'(c))^2}{g(c)} + 2p(p-1)\chi^2(c)g(c)  +  pg'(c)\chi(c)  \leq \f14g''(c),
\end{align*}
which is equivalent to
\begin{align}\label{decay:Lpn-g}
\f{32p}{p-1}\beta^4c^2 + 2p(p-1)\chi^2(c) + 2p\beta^2c\,\chi(c)
\leq \f12\beta^2 + \beta^4c^2.
\end{align}
Indeed, by choosing $\beta$ to be suitably large and using the smallness assumption on $\|c_0\|_{L^{\infty}(\R^d)}$, we have
\begin{align*}
12p(p-1)\mathcal{C}_{\chi}^2  \le \beta^2, \qquad
\mathcal{C}_{\chi}\|c_0\|_{L^{\infty}(\R^d)} \le \f{1}{12p} \qquad \textrm{and} \qquad
  \|c_0\|_{L^{\infty}(\R^d)}^2\le \f{p-1}{192p\beta^2},
\end{align*}
which entail the validation of \eqref{decay:Lpn-g} and thus
\begin{align}\label{decay:Lpn-5}
\f{d}{dt}\int_{\R^d}n^p g(c)  +  \f12p(p-1)\int_{\R^d}n^{p-2}g(c)|\na n|^2  \leq 0.
\end{align}
Then multiplying \eqref{decay:Lpn-5} by $(1+t)^{\ga_2}$ with $\gamma_2>0$ to be determined and integrating over $[0,t]$, one has
\begin{align*}
&(1+t)^{\ga_2}\int_{\R^d}n^p g(c)  +  \f12p(p-1)\int_0^t(1+s)^{\ga_2}\int_{\R^d}n^{p-2}g(c)|\na n|^2 ds  \nn \\
&\le \int_{\R^d}n_0^p\, g(c_0)  +  \ga_2\int_0^t(1+s)^{\ga_2-1}\int_{\R^d}n^p g(c) \,ds.
\end{align*}
Noticing that
\begin{align*}
\|\na n^{\f{p}{2}}\|_{L^2(\R^d)}^2 = \f{p^2}{4}\int_{\R^d}n^{p-2}|\na n|^2
\leq \f{p^2}{4}\max_{0\leq c\leq c_{\infty}}\lt(\f{1}{g(c)}\rt)\int_{\R^d}n^{p-2}g(c)|\na n|^2
\end{align*}
and thus that
\begin{align*}
\int_{\R^d}n^p g(c)&\leq \max_{0\leq c\leq c_{\infty}}g(c)\|n\|_{L^p(\R^d)}^p   \nn \\
&\leq C\max_{0\leq c\leq c_{\infty}}g(c)\lt\|\na n^{\f{p}{2}}\rt\|_{L^2(\R^d)}^{2\cd\f{d(p-1)}{d(p-1)+2}}  \lt\|n^{\f{p}{2}}\rt\|_{L^{\f{2}{p}}(\R^d)}^{2\cd\f{2}{d(p-1)+2}}  \nn \\
&\leq C\max_{0\leq c\leq c_{\infty}}g(c)\lt(\max_{0\leq c\leq c_{\infty}}\lt(\f{1}{g(c)}\rt)\int_{\R^d}n^{p-2}g(c)|\na n|^2\rt)^{\f{d(p-1)}{d(p-1)+2}}  \lt\|n\rt\|_{L^1(\R^d)}^{\f{2p}{d(p-1)+2}},
\end{align*}
we can apply H\"{o}lder's inequality and Young's inequality to obtain
\begin{align}\label{decay:Lpn-9}
&(1+t)^{\ga_2}\int_{\R^d}n^p g(c)  +  \f14p(p-1)\int_0^t(1+s)^{\ga_2}\int_{\R^d}n^{p-2}g(c)|\na n|^2 ds  \nn \\
&\leq \int_{\R^d}n_0^p \,g(c_0)  +  C\int_0^t(1+s)^{\ga_2-\f{d}{2}(p-1)-1}\|n\|_{L^1(\R^d)}^p ds   \nn \\
&\leq \int_{\R^d}n_0^p\, g(c_0)  +  C\|n_0\|_{L^1(\R^d)}^p (1+s)^{\ga_2-\f{d}{2}(p-1)}.
\end{align}
Taking $\ga_2=\f{d}{2}(p-1)$ and using the facts that $g(c)>1$ and $g(c_0)$ is bounded, one can infer from \eqref{decay:Lpn-9} that \eqref{decay:n} is hold for $2\le p<\infty$, while  \eqref{decay:n} for $1\leq p<2$ follows from a direct interpolation between $p=1$ and $p=2$.

\vspace{2mm}

\noindent
\textbf{Step 3}: $L^{\infty}$-decay of $c$.

For simplicity, we set
\[
\xi(t):=(1+t)^{-1}, \qquad \eta(t):=(1+t)^{-\f{d}{4}}.
\]
Letting $(c-\tau\eta(t))_{+}:=\max\{c-\tau\eta(t),0\}$, differentiating $\f12\int_{\R^d}(c-\tau\eta(t))_{+}^2$ with respect to time variable $t$, then using the nonnegativity of $n$ and $f$ and the divergence-free property of $\bfu$, we have
\begin{align*}
\f12\f{d}{dt}\int_{\R^d}(c-\tau\eta(t))_{+}^2
&=\int_{\R^d}(c-\tau\eta(t))_{+}\pa_t c  -  \tau\eta'(t)\int_{\R^d}(c-\tau\eta(t))_{+}  \nn \\
&=\int_{\R^d}(c-\tau\eta(t))_{+}\big(-\bfu\cd\na c+\Del c -nf(c)\big)  -  \tau\eta'(t)\int_{\R^d}(c-\tau\eta(t))_{+}  \nn \\
&\leq-\int_{\R^d}|\na(c-\tau\eta(t))_{+}|^2 -  \tau\eta'(t)\int_{\R^d}(c-\tau\eta(t))_{+}
\end{align*}
for any $\tau>0$, which implied by a direct integration with respect to time that
\begin{align}\label{decay:c-inf-1}
&\sup_{0\leq t\leq T}\f12\int_{\R^d}(c-\tau\eta(t))_{+}^2 +  \int_0^T\int_{\R^d}|\na(c-\tau\eta(t))_{+}|^2  \nn \\
&\,\,\leq \f12\int_{\R^d}(c_0-\tau)_{+}^2  -  \tau \int_0^T \eta'(t)\int_{\R^d}(c-\tau\eta(t))_{+}
\end{align}
for any fixed $T>0$. For notational simplicity, we will also denote
\begin{align*}
\mathcal{E}(\tau):=\sup_{0\leq t\leq T}\f12\int_{\R^d}(c-\tau\eta(t))_{+}^2 +  \int_0^T\int_{\R^d}|\na(c-\tau\eta(t))_{+}|^2
\end{align*}
and
\begin{align*}
\mathcal{U}(\tau):=\int_0^T\xi(t)\int_{\R^d}(c-\tau\eta(t))_{+}^2,\qquad
\mathcal{A}(\tau):=\int_0^T\xi(t)^{\f{d+4}{4}}\int_{\R^d}(c-\tau\eta(t))_{+}.
\end{align*}
For any $\tau\ge \tau_0:=\|c_0\|_{L^{\infty}(\R^d)}$, a direct calculation  shows that
\begin{align}\label{decay:c-inf-4}
\mathcal{U}'(\tau)
= -2\int_0^T\xi(t)\eta(t)\int_{\R^d}(c-\tau\eta(t))_{+}
= - 2\mathcal{A}(\tau)
\end{align}
due to $\xi(t)\eta(t)=\xi(t)^{\f{d+4}{4}}$,  which together with \eqref{decay:c-inf-1} and $|\eta'(t)|=\f{d}{4}\xi(t)\eta(t)$ implies that
\begin{align}\label{decay:c-inf-2}
\mathcal{E}(\tau)
\le \tau \int_0^T \big| \eta'(t)\big| \int_{\R^d}(c-\tau\eta(t))_{+}
=  \f{d\tau}{4} \int_0^T \xi(t)\eta(t)\int_{\R^d}(c-\tau\eta(t))_{+}
\le  \f{d\tau}{8}\big|\mathcal{U}'(\tau)\big|.
\end{align}
On the other hand, an application of the interpolation and Gagliardo-Nirenberg inequality  yields that
\begin{align*}
\|(c-\tau\eta(t))_{+}\|_{L^2(\R^d)}^2
&\le \|(c-\tau\eta(t))_{+}\|_{L^1(\R^d)}^{\f{4}{d+4}}\|(c-\tau\eta(t))_{+}\|_{L^{\f{2(d+2)}{d}}(\R^d)}^{\f{2(d+2)}{d+4}}   \nn \\
&\le  C \|(c-\tau\eta(t))_{+}\|_{L^1(\R^d)}^{\f{4}{d+4}} \|\na(c-\tau\eta(t))_{+}\|_{L^2(\R^d)}^{\f{2d}{d+4}}
   \|(c-\tau\eta(t))_{+}\|_{L^2(\R^d)}^{\f{4}{d+4}}
\end{align*}
for some constant $C$, which together with H\"{o}lder's inequality entails that
\begin{align}\label{decay:nc-9}
\mathcal{U}(\tau)
& \le C \int_0^T\xi(t) \|(c-\tau\eta(t))_{+}\|_{L^1(\R^d)}^{\f{4}{d+4}} \|\na(c-\tau\eta(t))_{+}\|_{L^2(\R^d)}^{\f{2d}{d+4}}  \|(c-\tau\eta(t))_{+}\|_{L^2(\R^d)}^{\f{4}{d+4}}  \nonumber \\
& \le C \Big(\int_0^T\xi(t)^{\f{d+4}{4}} \|(c-\tau\eta(t))_{+}\|_{L^1(\R^d)}\Big)^{\f{4}{d+4}}  \|\na(c-\tau\eta(t))_{+}\|_{L^2L^2}^{\f{2d}{d+4}}  \|(c-\tau\eta(t))_{+}\|_{L^{\infty}L^2}^{\f{4}{d+4}}  \nonumber \\
& \le C\mathcal{A}(\tau)^{\f{4}{d+4}}\mathcal{E}(\tau)^{\f{d+2}{d+4}}.
\end{align}
Now we can deduce from  \eqref{decay:c-inf-4}, \eqref{decay:nc-9} and \eqref{decay:c-inf-2} that
\[
|\mathcal{U}'(\tau)| = 2\mathcal{A}(\tau)
\geq C\mathcal{U}(\tau)^{\f{d+4}{4}}\mathcal{E}(\tau)^{-\f{d+2}{4}}
\geq C\tau^{-\f{d+2}{4}} \mathcal{U}(\tau)^{\f{d+4}{4}}|\mathcal{U}'(\tau)|^{-\f{d+2}{4}}
\]
and thus from the nonpositivity  of  $\mathcal{U}'(\tau)$ that
\begin{align}\label{decay:nc-11}
\mathcal{U}'(\tau)\leq -C\tau^{-\f{d+2}{d+6}}\,\mathcal{U}(\tau)^{\f{d+4}{d+6}}.
\end{align}
Noticing that
\[
c-\f{\tau_0}{2}\eta(t)\geq \f{\tau_0}{2}\eta(t) \qquad \textrm{on}\quad \mathcal{S}_t :=\Big\{x\in \R^d\,\big|\,c(x, t) - \tau_0\eta(t)\geq 0\Big\},
\]
we have
\begin{align*}
\mathcal{U}(\tau_0)
& = \int_0^T\xi(t)\int_{\mathcal{S}_t}(c-\tau_0\eta(t))_{+}^2   \nn \\
&\leq \int_0^T\xi(t)\int_{\mathcal{S}_t}\lt(c-\f{\tau_0}{2}\eta(t)\rt)_{+}^{\f{2(d+2)}{d}}\lt(\f{\tau_0}{2}\eta(t)\rt)^{-\f{4}{d}}  \nn \\
&\leq \lt(\f{\tau_0}{2}\rt)^{-\f{4}{d}} \int_0^T\f{\xi(t)}{\eta(t)^{\f{4}{d}}} \int_{\R^d}c^{\f{2(d+2)}{d}} \nn \\
& \le  C\tau_0^{-\f{4}{d}} \int_0^T\|\na c\|_{L^2(\R^d)}^2\|c\|_{L^2(\R^d)}^{\f{4}{d}} \nn \\
& \le C\tau_0^{-\f{4}{d}} \|c_0\|_{L^2(\R^d)}^{\f{2(d+2)}{d}},
\end{align*}
where we used  a direct result from \eqref{blow:cL2} in the last inequality. Thus we can conclude from the ODI \eqref{decay:nc-11} that
\[
\mathcal{U}(\tau)^{\f2{d+6}}
\le \mathcal{U}(\tau_0)^{\f2{d+6}} - C\Big( \tau^{\f4{d+6}} -  \tau_0^{\f4{d+6}} \Big),
\]
which implies that  $\mathcal{U}(\tau)$ must vanish at some finite point $\tau=\tau(\|c_0\|_{L^{\infty}(\R^d)},\|c_0\|_{L^2(\R^d)})$ and thus that
\[
\|c(t)\|_{L^{\infty}(\R^d)} \le C(1+t)^{-\f{d}{4}}  
\]
for any $t\in (0, T)$.  By the arbitrariness of $T$, we obtained the desired  $L^{\infty}$-decay \eqref{decay:nc}.  This completes the proof of Theorem \ref{decay}.
\end{proof}

\vspace{2mm}

\section{Appendix}\label{appendix}

In this appendix, we establish the global-in-time existence of weak solutions to the chemotaxis-Navier-Stokes system \eqref{EQ}-\eqref{initial} in $\R^3$.  The same conclusion can  be easily deduced for $\R^2$.

We begin with presenting the definition of  global weak solutions to system \eqref{EQ}-\eqref{initial}.
\begin{definition}[Weak solution]\label{def-weak}
A triple $(n,c,\bfu)$ is called a global weak solution to the Cauchy problem \eqref{EQ}-\eqref{initial} if for any given $T>0$, the following conditions are satisfied:

\begin{itemize}

  \item[(i)] it holds that $n(x, t)\geq0$, $c(x, t)\geq0$ a.e. in $\R^3\times[0,T]$, and that
  \begin{align*}
  &n(1 +  |\ln n|)\in L^{\infty}\big(0,T; L^1(\R^3)\big), \quad \na\sqrt{n}\in L^2\big(0,T; L^2(\R^3)\big),  \nn \\
  &c \in L^{\infty}\big(0,T; L^1(\R^3)\cap L^{\infty}(\R^3)\cap H^1(\R^3)\big),  \quad \Del c\in L^2\big(0,T; L^2(\R^3)\big),  \nn \\
  &\bfu\in L^{\infty}\big(0,T; L^2(\R^3;\R^3)\big) \cap L^2\big(0,T;H^1(\R^3;\R^3)\big);
  \end{align*}

  \item[(ii)]  for any $\varphi, \,\psi\in C_0^{\infty}\big(\R^3\times[0, T]\big)$ with $\varphi(\cd,T)=0$ and $\psi(\cd,T)=0$, it holds that
      \[
      \int_0^T\int_{\R^3}n\Big(\pa_t\varphi + \Del\varphi + \bfu\cd\na\varphi + \chi(c)\na c\cd\na\varphi - \na\phi \cd \na\varphi\Big) dxdt
        +  \int_{\R^3}n_0(x)\varphi(x,0) dx =0
      \]
      and
      \[
      \int_0^T\int_{\R^3} \Big(c\big(\pa_t\psi + \Del\psi + \bfu\cd\na\psi\big)  -  nf(c)\psi\Big) dxdt  +  \int_{\R^3}c_0(x)\psi(x,0) dx =0,
      \]
      and for any $\Psi\in C_0^{\infty}\big((\R^3; \R^3)\times[0, T]\big)$ with $\na\cd\Psi=0$ and $\Psi(\cd,T)=0$, it holds that
      \begin{align*}
     &  \int_0^T \int_{\R^3} \Big(\bfu\cd\big(\pa_t\Psi + \Del\Psi\big) + \kappa(\bfu\otimes\bfu):\na\Psi \Big) dxdt\nn \\
     & \qquad +\int_0^T\int_{\R^3}\big(- n\na\phi +  \chi(c)n\na c\big)\cd\Psi dxdt + \int_{\R^3}\bfu_0(x) \cd \Psi(x,0) dx = 0.
      \end{align*}
\end{itemize}
\end{definition}

The global existence of weak solutions to system \eqref{EQ}-\eqref{initial} in $\R^3$ can be stated as follows.

\begin{theorem}[Global existence of weak solutions in $\R^3$ with $\kappa\in\R$]\label{global-weak}
Let $d=3, \kappa\in\R$. Suppose that the assumptions $(\bf{A})$, $(\bf{B})$ and $(\bf{C})$ hold. If additionally $\|c_0\|_{L^{\infty}(\R^3)}$ is suitably small,  then the Cauchy problem \eqref{EQ}-\eqref{initial} admits at least a global-in-time weak solution $(n,c,\bfu)$ in the sense of Definition \ref{def-weak}.
\end{theorem}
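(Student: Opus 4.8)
The plan is to construct globally defined smooth approximate solutions, to verify that the computations behind Lemma~\ref{mass}, Lemma~\ref{Lp-n-1} and Lemma~\ref{entropy} survive the approximation with constants independent of the regularization parameter, and then to pass to the limit by a local Aubin--Lions compactness argument. Concretely, fix a standard mollifier $\rho_\varepsilon$, replace the advecting velocity $\bfu$ by $J_\varepsilon\bfu:=\rho_\varepsilon*\bfu$ in the three transport terms $\bfu\cd\na n$, $\bfu\cd\na c$ and $\kappa(\bfu\cd\na)\bfu$, and mollify the initial data to $(n_0^\varepsilon,c_0^\varepsilon,\bfu_0^\varepsilon)$, chosen non-negative where appropriate, with $\na\cd\bfu_0^\varepsilon=0$, converging to $(n_0,c_0,\bfu_0)$ in the norms of assumption~$(\mathbf{C})$ and satisfying $\|c_0^\varepsilon\|_{L^\infty(\R^3)}\le\|c_0\|_{L^\infty(\R^3)}$. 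Since the mollification removes the derivative loss responsible for possible blow-up in the Navier--Stokes nonlinearity, a variant of Lemma~\ref{local} together with the uniform bounds below provides a global smooth solution $(n^\varepsilon,c^\varepsilon,\bfu^\varepsilon)$ on $\R^3\times(0,\infty)$.

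\emph{Uniform estimates.} Since $J_\varepsilon\bfu^\varepsilon$ is still divergence-free, Lemma~\ref{mass} applies verbatim, giving $\|n^\varepsilon(t)\|_{L^1}=\|n_0^\varepsilon\|_{L^1}$, $c^\varepsilon\ge0$ and $\|c^\varepsilon(t)\|_{L^p}\le\|c_0^\varepsilon\|_{L^p}$. The proof of Lemma~\ref{Lp-n-1} uses only the divergence-free property of the advecting field and the algebraic structure of the chemotactic and potential terms, so it carries over and yields $\|n^\varepsilon(t)\|_{L^p(\R^3)}\le C(p,T)$ for every $p\in[1,\infty)$, uniformly in $\varepsilon$; likewise the extra advection contributions in the proof of Lemma~\ref{entropy} cancel after integration by parts exactly as in \eqref{Lp-n-4}, so that lemma carries over too, giving $\varepsilon$-uniform control of $\int n^\varepsilon|\ln n^\varepsilon|+\|\na c^\varepsilon\|_{L^2}^2+\|\bfu^\varepsilon\|_{L^2}^2$ and of the dissipation $\int_0^t\big(\int\f{|\na n^\varepsilon|^2}{n^\varepsilon}+\|\na^2c^\varepsilon\|_{L^2}^2+\|\na\bfu^\varepsilon\|_{L^2}^2\big)\,ds$, together with the first-moment bound $\int n^\varepsilon\langle x\rangle\le C(T)$ coming from \eqref{n-moment-1}; moreover the case $p=2$ of \eqref{Lp-n-7} gives, after Gronwall, $\na n^\varepsilon\in L^2(0,T;L^2(\R^3))$ uniformly. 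Consequently $\{n^\varepsilon\}$ is bounded in $L^\infty(0,T;L^p(\R^3))$ for every finite $p$ and in $L^2(0,T;H^1(\R^3))$, $\{c^\varepsilon\}$ in $L^\infty(0,T;L^1\cap L^\infty\cap H^1)\cap L^2(0,T;H^2)$, and $\{\bfu^\varepsilon\}$ in $L^\infty(0,T;L^2)\cap L^2(0,T;H^1)$.

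\emph{Compactness and passage to the limit.} Using the equations, the time derivatives $\pa_tn^\varepsilon$, $\pa_tc^\varepsilon$, $\pa_t\bfu^\varepsilon$ are bounded in $L^q\big(0,T;W^{-1,q}(\R^3)\big)$ for some $q>1$, all nonlinear fluxes being absorbed by the $L^p$ bounds of Lemma~\ref{Lp-n-1} and the $L^2(0,T;H^1)$ bound on $\bfu^\varepsilon$. The local form of the Aubin--Lions--Simon lemma then yields, along a subsequence: $n^\varepsilon\to n$ strongly in $L^2_{\mathrm{loc}}\big(\R^3\times(0,T)\big)$ and a.e. --- hence, by interpolation with the $\varepsilon$-uniform $L^p$ bounds, strongly in every $L^q_{\mathrm{loc}}$ with $q<\infty$; $c^\varepsilon\to c$ strongly in $L^2\big(0,T;H^1_{\mathrm{loc}}(\R^3)\big)$ and a.e., so $\na c^\varepsilon\to\na c$ strongly in $L^2_{\mathrm{loc}}$; and $\bfu^\varepsilon\to\bfu$ strongly in $L^2_{\mathrm{loc}}$; together with the natural weak limits in the bound spaces and with $J_\varepsilon\bfu^\varepsilon\to\bfu$ enjoying the same strong convergence. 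Continuity and uniform boundedness then give $\chi(c^\varepsilon)\to\chi(c)$ and $f(c^\varepsilon)\to f(c)$ boundedly and a.e. In the weak formulation of Definition~\ref{def-weak}, the linear terms pass by weak convergence; $n^\varepsilon\bfu^\varepsilon\to n\bfu$, $n^\varepsilon f(c^\varepsilon)\to nf(c)$ and $\kappa\,\bfu^\varepsilon\otimes(J_\varepsilon\bfu^\varepsilon)\to\kappa\,\bfu\otimes\bfu$ converge in $L^1_{\mathrm{loc}}$ (one factor strong in a good Lebesgue space, the other weak); and the delicate term $\chi(c^\varepsilon)n^\varepsilon\na c^\varepsilon\to\chi(c)n\na c$ in $L^1_{\mathrm{loc}}$ because $\na c^\varepsilon\to\na c$ strongly in $L^2_{\mathrm{loc}}$ while $\{n^\varepsilon\}$ is bounded in every $L^p$ and converges strongly in $L^2_{\mathrm{loc}}$. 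The regularity class and the non-negativity required in Definition~\ref{def-weak}(i) follow from a.e. convergence, Fatou's lemma and weak lower semicontinuity (for $\na\sqrt{n^\varepsilon}$ one uses $\sqrt{n^\varepsilon}\to\sqrt n$ a.e. and in $L^2_{\mathrm{loc}}$, whence $\na\sqrt{n^\varepsilon}\rightharpoonup\na\sqrt n$). This yields a global weak solution in $\R^3$; the same scheme with the $d=2$ estimates of Section~\ref{globalexist} gives the analogous statement in $\R^2$.

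The main obstacle, as I see it, is twofold: designing the regularization so that the approximate Navier--Stokes problems are globally solvable while the constants in Lemma~\ref{Lp-n-1} and Lemma~\ref{entropy} remain independent of $\varepsilon$, and obtaining the compactness of $\na c^\varepsilon$ in $L^2_{\mathrm{loc}}$ and of $n^\varepsilon$ on the unbounded domain (which needs the time-derivative bounds above and a diagonal extraction over an exhaustion by balls, with the first-moment bound controlling mass at infinity). It is precisely the quadratic-in-$\na c$ force $\chi(c)n\na c$ in the fluid equation --- the strongly nonlinear coupling this paper emphasizes --- that forces one to invoke the full $L^p$ control of $n$ from Lemma~\ref{Lp-n-1} rather than only entropy-level integrability.
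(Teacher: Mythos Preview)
Your proposal is correct and follows the same architecture as the paper: regularize, transport the a~priori estimates of Lemmas~\ref{Lp-n-1} and~\ref{entropy} to the approximate system with constants independent of the parameter, and pass to the limit by Aubin--Lions on compact sets. The one place where you are too terse is the claim that the mollified system is \emph{globally} classically solvable; you should make explicit that, since $\|J_\varepsilon\bfu^\varepsilon\|_{W^{1,\infty}}\le C_\varepsilon\|\bfu^\varepsilon\|_{L^2}$, the convective term is effectively lower order and the $\kappa=0$ criterion \eqref{blow-1-1} holds for the regularized system, after which the bootstrap \eqref{Stokes-H1u-1}--\eqref{Stokes-H2c-2} (with $\varepsilon$-dependent constants) closes.

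The genuine difference from the paper is the regularization scheme. The paper does not mollify the advecting velocity; instead it projects the fluid equation onto $\mathcal{H}_l=P_l\mathcal{H}$ via the spectral cut-offs $P_l$ of Proposition~\ref{operator-property} and additionally mollifies the chemotactic flux, the consumption term and the potential drift in \eqref{regular-EQ}. The payoff is that global existence for the approximants (Proposition~\ref{regular-global}) follows almost immediately from the original blow-up criterion \eqref{blow-1}: the inequality $\|\nabla P_l\bfu\|_{L^2}\le\sqrt{l}\,\|\bfu\|_{L^2}$ upgrades the entropy-level $L^\infty_tL^2_x$ bound on $\bfu^{l,\ep}$ to $L^5_tL^5_x$, while the mollification of $nf(c)$ lets one control $\|\nabla\big((n^{l,\ep}f(c^{l,\ep}))\ast\eta^\ep\big)\|_{L^2}$ by $\|n^{l,\ep}\|_{L^1}$ alone, so that $\nabla^2c^{l,\ep}\in L^\infty_tL^2_x$ follows without first securing $\nabla n^{l,\ep}\in L^2_tL^2_x$. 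Your Leray-type scheme is simpler to set up and keeps the chemotaxis equations untouched, at the cost of relying on the extra input $\nabla n^\varepsilon\in L^2_tL^2_x$ from \eqref{Lp-n-7} to run the same bootstrap. Both routes land on the same compactness argument; the paper records slightly sharper time-derivative bounds (e.g.\ $\pa_tc^{l,\ep}\in L^{4/3}_tL^2_x$, $\pa_tn^{l,\ep}\in L^{4/3}_tH^{-3}_x$) than your generic $L^q(0,T;W^{-1,q})$, but either suffices for Aubin--Lions.
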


We will give a sketch for the proof of Theorem \ref{global-weak} in Section \ref{passlimit}.

\subsection{Preliminaries}

In this subsection, we present some preliminaries. Let  $H^{-1}(\R^3)$ stand for the dual space of $H^1(\R^3)$. For notational simplicity, we also set
\[
\mathcal{V}(\R^3) :=\Big\{ \bfu \, \big| \, \bfu = (u_1, u_2, u_3) \in (H^1(\R^3))^3\Big\}, \quad
\mathcal{V}'(\R^3) := \Big\{\bfv \, \big| \, \bfv = (v_1, v_2, v_3) \in (H^{-1}(\R^3))^3\Big\},
\]
and
\[
 \mathcal{V}_{\sigma}(\R^3) :=\big\{\bfu\in\mathcal{V}(\R^3)\, \big| \, \na\cd\bfu=0 \big\}, \qquad
  \mathcal{H}(\R^3) := \text{the closure of $\mathcal{V}_{\sigma}(\R^3)\,\,\, \textrm{in}\,\,\, L^2(\R^3;\R^3)$}.
 \]
Then for $\bfu \in\mathcal{V}(\R^3)$ and $\bfv\in\mathcal{V}'(\R^3)$, the duality is defined by $\langle\bfu, \bfv\rangle :=\sum\limits_{i=1}^{3}\langle u_i, v_i\rangle_{H^1\times H^{-1}}$.  Based on these, we denote the space $\mathcal{V}_{\sigma}^{\circ}(\R^3) := \big\{\bfv\in\mathcal{V}'(\R^3) \, \big| \, \langle\bfu, \bfv\rangle=0\,\,\,  \textrm{for all} \,\,\,\bfu \in \mathcal{V}_{\sigma}(\R^3)\big\}$.

Next, let us define two operators which will be used in constructing approximate solutions in section \ref{sec:reguler}, and state their properties.

\begin{definition}[\cite{Chemin2006}]\label{operator}
  The bilinear map $Q$ is defined by
  \begin{align*}
  Q:\,\,&\mathcal{V}(\R^3)\times\mathcal{V}(\R^3) \, \rightarrow \,\mathcal{V}'(\R^3),  \nn \\
  & \qquad (\bfu, \, \bfv) \quad \,\,\,\,\, \mapsto \,\, Q (\bfu, \, \bfv) := -\na\cd(\bfu\otimes\bfv).
  \end{align*}
\end{definition}

\begin{proposition}[\cite{Chemin2006}]\label{operator-property}
There exists a family of orthogonal projections on $\mathcal{H}(\R^3)$, denoted by $(P_l)_{l\in\mathbb{N}}$, which satisfies the following property:
\begin{itemize}
\item  For any $\bfu\in\mathcal{H}(\R^3)$, the vector field $P_l \bfu$ is an element of $\mathcal{V}_{\sigma}(\R^3)$ satisfying
  \[
  \lim_{l\to + \infty}\|P_l\bfu-\bfu\|_{\mathcal{H}(\R^3)}=0
  \]
 and
  \begin{equation}\label{property-3}
  \|\na P_l\bfu\|_{L^2(\R^3)} \leq \sqrt{l}\|\bfu\|_{L^2(\R^3)}, \quad \|\Del P_l\bfu\|_{L^2(\R^3)} \leq l\|\bfu\|_{L^2(\R^3)}.
  \end{equation}
 \item For any $\bfu,\bfv\in\mathcal{V}(\R^3)$, there exists a positive constant $C$ such that for any $\varphi\in\mathcal{V}(\R^3)$
 \[
  \langle Q(\bfu,\bfv),\varphi\rangle
  \le C\|\na\bfu\|_{L^2(\R^3)}^{\f34}\|\na\bfv\|_{L^2(\R^3)}^{\f34}\|\bfu\|_{L^2(\R^3)}^{\f14}\|\bfv\|_{L^2(\R^3)}^{\f14}\|\na\varphi\|_{L^2(\R^3)}.
 \]
  Moreover, for any $\bfu\in\mathcal{V}_{\sigma}(\R^3)$ and $\bfv\in\mathcal{V}(\R^3)$,
 \[
  \langle Q(\bfu,\bfv),\bfv\rangle =0.
 \]
\end{itemize}

\end{proposition}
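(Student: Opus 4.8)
The plan is to realise the family $(P_l)$ as radial frequency truncations on the divergence-free space $\mathcal{H}(\R^3)$ — on the whole space the spectral calculus of the Stokes operator reduces to Plancherel's theorem — and to treat the bilinear map $Q$ by one integration by parts together with the Gagliardo--Nirenberg inequality. Everything is essentially classical; I expect the only step needing genuine care to be the cancellation $\langle Q(\bfu,\bfv),\bfv\rangle=0$, where the unboundedness of $\R^3$ forbids a naive integration by parts.

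First I would construct $(P_l)$. Via Plancherel, $\mathcal{H}(\R^3)$ is precisely the set of $\bfu\in L^2(\R^3;\R^3)$ with $\xi\cd\widehat{\bfu}(\xi)=0$ a.e., i.e.\ $\bfu=\bbp\bfu$ for the Leray projection $\bbp$ of symbol $I-|\xi|^{-2}\xi\otimes\xi$. I then set $\widehat{P_l\bfu}(\xi):=\mathbf{1}_{\{|\xi|^2\le l\}}(\xi)\,\widehat{\bfu}(\xi)$. Since the symbol is a real-valued indicator, $P_l$ is self-adjoint and idempotent, hence an orthogonal projection on $L^2$; because $\xi\cd\widehat{P_l\bfu}=\mathbf{1}_{\{|\xi|^2\le l\}}(\xi\cd\widehat{\bfu})=0$ it maps $\mathcal{H}(\R^3)$ into itself, and as $\widehat{P_l\bfu}$ has compact frequency support one gets $P_l\bfu\in H^s(\R^3)$ for every $s$, in particular $P_l\bfu\in\mathcal{V}_{\sigma}(\R^3)$. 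The strong convergence follows from $\|P_l\bfu-\bfu\|_{\mathcal{H}(\R^3)}^2=\int_{\{|\xi|^2>l\}}|\widehat{\bfu}|^2\,d\xi\to0$ by dominated convergence, and the two inverse estimates in \eqref{property-3} are then immediate: since $\widehat{P_l\bfu}$ is supported in $\{|\xi|\le\sqrt l\}$,
\[
\|\na P_l\bfu\|_{L^2(\R^3)}^2=\int_{\{|\xi|^2\le l\}}|\xi|^2|\widehat{\bfu}|^2\,d\xi\le l\|\bfu\|_{L^2(\R^3)}^2,\qquad
\|\Del P_l\bfu\|_{L^2(\R^3)}^2=\int_{\{|\xi|^2\le l\}}|\xi|^4|\widehat{\bfu}|^2\,d\xi\le l^2\|\bfu\|_{L^2(\R^3)}^2,
\]
which give $\|\na P_l\bfu\|_{L^2}\le\sqrt l\,\|\bfu\|_{L^2}$ and $\|\Del P_l\bfu\|_{L^2}\le l\,\|\bfu\|_{L^2}$. (Equivalently $P_l=\mathbf{1}_{[0,l]}(A)$ for the Stokes operator $A=-\bbp\Del$; I prefer the Fourier description since on $\R^3$ it is fully explicit.)

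For $Q(\bfu,\bfv)=-\na\cd(\bfu\otimes\bfv)$, I would first use $H^1(\R^3)\hookrightarrow L^4(\R^3)$ to get $\bfu\otimes\bfv\in L^2$, so that $Q(\bfu,\bfv)\in\mathcal{V}'(\R^3)$ and $\langle Q(\bfu,\bfv),\varphi\rangle=\int_{\R^3}(\bfu\otimes\bfv):\na\varphi$ after one integration by parts. H\"older with exponents $(4,4,2)$ followed by the three-dimensional bound $\|w\|_{L^4}\le C\|\na w\|_{L^2}^{3/4}\|w\|_{L^2}^{1/4}$ applied to $\bfu$ and to $\bfv$ then yields exactly the claimed estimate. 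For the cancellation I would view $b(\bfu,\bfv,\bfw):=\int_{\R^3}(\bfu\otimes\bfv):\na\bfw=\sum_{i,j}\int_{\R^3}u_iv_j\,\pa_iv_j$ as a trilinear form, which the same $L^4\times L^4\times L^2$ bound shows to be absolutely convergent and continuous on $\mathcal{V}(\R^3)^3$. For smooth divergence-free fields one has $\int u_iv_j\pa_iv_j=\tfrac12\int u_i\pa_i|\bfv|^2=-\tfrac12\int(\na\cd\bfu)|\bfv|^2=0$; since smooth divergence-free fields are $H^1$-dense in $\mathcal{V}_{\sigma}(\R^3)$, continuity of $b$ lets me pass to the limit and obtain $\langle Q(\bfu,\bfv),\bfv\rangle=b(\bfu,\bfv,\bfv)=0$ for all $\bfu\in\mathcal{V}_{\sigma}(\R^3)$, $\bfv\in\mathcal{V}(\R^3)$.

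The main obstacle is exactly this last cancellation: on $\R^3$ the identity $\int u_i\pa_i|\bfv|^2=-\int(\na\cd\bfu)|\bfv|^2$ is not justified for merely $H^1$ fields, since $|\bfv|^2$ fails to lie in $H^1$ (only $\na|\bfv|^2\in L^{4/3}$) and no decay at infinity is available a priori to discard a boundary contribution, so the purely formal computation is illegitimate. The remedy is the density/continuity argument above, and the whole point of establishing the $L^4$-based bound on $b$ first is to guarantee that limits may be exchanged inside the trilinear form. All remaining ingredients — the Fourier construction of the $P_l$ and the Gagliardo--Nirenberg estimate on $Q$ — are routine.
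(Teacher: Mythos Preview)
The paper does not prove this proposition: it is stated with a citation to Chemin--Desjardins--Gallagher--Grenier and used as a black box. Your argument is correct and is in fact the construction carried out in that reference for the whole-space case --- the spectral projections $P_l$ are realised there exactly as the Fourier cut-offs $\mathbf{1}_{\{|\xi|^2\le l\}}$, the Bernstein-type bounds \eqref{property-3} come from Plancherel, and the bilinear estimate together with the cancellation $\langle Q(\bfu,\bfv),\bfv\rangle=0$ follow from the $L^4\times L^4\times L^2$ pairing and density of smooth solenoidal fields, just as you outline. One small remark: in your display of $b(\bfu,\bfv,\bfw)$ you write $\pa_i v_j$ where you mean $\pa_i w_j$; and for the density step it suffices to approximate only $\bfu$ by $C_c^\infty$ divergence-free fields (keeping $\bfv\in H^1$ fixed), since for compactly supported smooth $\bfu$ the integration by parts $\int u_i\,\pa_i|\bfv|^2=-\int(\na\cd\bfu)\,|\bfv|^2$ is already rigorous for arbitrary $\bfv\in H^1$.
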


\subsection{Regularisation}\label{sec:reguler}

In this subsection, we investigate a regularized system to \eqref{EQ}-\eqref{initial},  which is consistent with the entropy stated in Lemma \ref{entropy}. We will construct the corresponding  approximate solution by using $(P_l)_{l\in\mathbb{Z}}$ as in  \cite[Chapter 2]{Chemin2006} for the pure Navier-Stokes system. Precisely, we regularize system \eqref{EQ} as follows:
\begin{equation}\label{regular-EQ}
\left\{
\begin{split}
 \pa_t n^{l,\ep}  +  \bfu^{l,\ep}\cd\na n^{l,\ep}  & =  \Del n^{l,\ep}  -  \na\cd\big(n^{l,\ep} \big(\chi(c^{l,\ep})\na c^{l,\ep}\big) \ast\eta^{\ep}\big)  +  \na\cd\big( (n^{l,\ep}\na\phi) \ast\eta^{\ep}\big),   \\
\pa_t c^{l,\ep}  +  \bfu^{l,\ep}\cd\na c^{l,\ep}  & =  \Del c^{l,\ep}  -  \big(n^{l,\ep}f(c^{l,\ep})\big)\ast\eta^{\ep},  \\
\pa_t\bfu^{l,\ep} + \kappa P_lQ(\bfu^{l,\ep},\bfu^{l,\ep})  & =  P_l\Del\bfu^{l,\ep} - P_l(n^{l,\ep}\na\phi)
  +  P_l\big(n^{l,\ep}\chi(c^{l,\ep})\na c^{l,\ep}\big),  \\
\na\cd\bfu^{l,\ep} & = 0
\end{split}
\right.
\end{equation}
in $\R^3\times (0, +\infty)$ with initial data
\begin{align}\label{regular-initial}
\big(n^{l,\ep}, c^{l,\ep}, \bfu^{l,\ep}\big) \big|_{t=0} = \big(n_0^{l,\ep}, c_0^{l,\ep}, \bfu_0^{l,\ep}\big)
: = \big(n_0\ast\eta^{\ep}, c_0\ast\eta^{\ep}, P_l\bfu_0\ast\eta^{\ep}\big)   \qquad \text{in}\quad \R^3,
\end{align}
where $\eta^{\ep}$ is a mollifier. For simplicity, we will denote by $\mathcal{H}_l(\R^3)$ the space $P_l\mathcal{H}(\R^3)$. Then for each given $l$ and $\ep$, one can easily prove that system \eqref{regular-EQ}-\eqref{regular-initial} admits a local solution $(n^{l,\ep},c^{l,\ep},\bfu^{l,\ep})$ in the class
\begin{align*}
& (n^{l,\ep},c^{l,\ep},\bfu^{l,\ep})\in L^{\infty}\big(0,T; \, H^2(\R^3)\times H^2(\R^3) \times \big(H^2(\R^3:\R^3)\cap \mathcal{H}_l(\R^3)\big)\big)\\
& ( n^{l,\ep}, c^{l,\ep},\bfu^{l,\ep})\in L^2\big(0,T; \, H^3(\R^3)\times H^3(\R^3)\times H^3(\R^3;\R^3)\big)
\end{align*}
for any  $T\in (0, T^*)$ with some $T^*\in (0, +\infty]$ fulfilling the property that
\begin{itemize}
  \item $n^{l,\ep}\ge 0$ \, and \, $c^{l,\ep}\ge 0$ \, in $\R^3\times(0, T^*)$;
  \item $\|n^{l,\ep}\|_{L^1(\R^3)}=\|n_0^{l,\ep}\|_{L^1(\R^3)}\le C\|n_0\|_{L^1(\R^3)}$ \,  in $(0, T^*)$;
  \item $\|c^{l,\ep}\|_{L^p(\R^3)}\leq \|c_0^{l,\ep}\|_{L^p(\R^3)}\le C \|c_0\|_{L^p(\R^3)}$  \,  in $(0, T^*)$ for any $1\leq p\leq \infty$
\end{itemize}
with some universal positive constant $C$. Moreover, we have the following energy inequality:

\begin{proposition}\label{regular-ennergy}
For any $T\in (0, T^*)$,  the solution $(n^{l,\ep},c^{l,\ep},\bfu^{l,\ep})$ of system \eqref{regular-EQ}-\eqref{regular-initial} satisfies the uniform estimate
\begin{align}\label{regular-entropy}
& \int_{\R^3} n^{l,\ep} \big|\ln n^{l,\ep}\big|   +  \|\na c^{l,\ep}\|_{L^2(\R^3)}^2  +  \|\bfu^{l,\ep}\|_{L^2(\R^3)}^2   \nn \\
&\qquad +  \f12\int_0^t\Big(\int_{\R^3}\f{|\na n^{l,\ep}|^2}{n^{l,\ep}}  +  \|\na^2 c^{l,\ep}\|_{L^2(\R^3)}^2
   +  \|\na\bfu^{l,\ep}\|_{L^2(\R^3)}^2 \Big)ds
\leq C
\end{align}
for all $t\in(0, T)$, where $C$ is a positive constant depending only  on $T$, $\int_{\R^3}n_0|\ln n_0|$, $\int_{\R^3} n_0\langle x\rangle$, $\|\na c_0\|_{L^2(\R^3)}^2$ and $\|\bfu_0\|_{L^2(\R^3)}^2$.
\end{proposition}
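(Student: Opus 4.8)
\noindent\emph{Proof proposal.} The plan is to rerun, on the regularised system \eqref{regular-EQ}, the two computations that lead to Lemma \ref{Lp-n-1} and Lemma \ref{entropy}, and to check that every constant produced is independent of $l$ and $\ep$. The first thing I would record is that the regularisation preserves the three structural features used in those proofs. Since $\na\cd\bfu^{l,\ep}=0$, all drift integrals vanish exactly as in \eqref{Lp-n-4}. Since $\bfu^{l,\ep}\in\mathcal{H}_l(\R^3)$ we have $P_l\bfu^{l,\ep}=\bfu^{l,\ep}$, and $P_l$ is an $\mathcal{H}(\R^3)$-orthogonal projection, so testing \eqref{regular-EQ}$_3$ against $\bfu^{l,\ep}$ reproduces the unregularised velocity identity: $\langle P_l\Del\bfu^{l,\ep},\bfu^{l,\ep}\rangle=-\|\na\bfu^{l,\ep}\|_{L^2(\R^3)}^2$, $\langle P_l(n^{l,\ep}\na\phi),\bfu^{l,\ep}\rangle=\int_{\R^3}n^{l,\ep}\na\phi\cd\bfu^{l,\ep}$, $\langle P_l(n^{l,\ep}\chi(c^{l,\ep})\na c^{l,\ep}),\bfu^{l,\ep}\rangle=\int_{\R^3}\chi(c^{l,\ep})n^{l,\ep}\na c^{l,\ep}\cd\bfu^{l,\ep}$, while $\kappa\langle P_lQ(\bfu^{l,\ep},\bfu^{l,\ep}),\bfu^{l,\ep}\rangle=\kappa\langle Q(\bfu^{l,\ep},\bfu^{l,\ep}),\bfu^{l,\ep}\rangle=0$ by Proposition \ref{operator-property}. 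Finally, choosing $\eta^{\ep}$ symmetric makes convolution with it self-adjoint on $L^2(\R^3)$ and norm non-increasing, $\|g\ast\eta^{\ep}\|_{L^q(\R^3)}\le\|g\|_{L^q(\R^3)}$ for all $q\in[1,\infty]$; combined with the convexity estimate $\Phi(h\ast\eta^{\ep})\le\Phi(h)\ast\eta^{\ep}$ for convex $\Phi$, this gives $\|c_0^{l,\ep}\|_{L^{\infty}(\R^3)}\le\|c_0\|_{L^{\infty}(\R^3)}$ and uniform control of $\int_{\R^3}n_0^{l,\ep}|\ln n_0^{l,\ep}|$, $\int_{\R^3}n_0^{l,\ep}\langle x\rangle$, $\|\na c_0^{l,\ep}\|_{L^2(\R^3)}$, $\|\bfu_0^{l,\ep}\|_{L^2(\R^3)}$ by the corresponding data; in particular the smallness threshold on $\|c_0\|_{L^{\infty}(\R^3)}$ is exactly the one already fixed in Lemmas \ref{Lp-n-1} and \ref{entropy}.

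With this in hand I would first reproduce the $L^p$ bound: multiplying \eqref{regular-EQ}$_1$ by $p(n^{l,\ep})^{p-1}g(c^{l,\ep})$ and \eqref{regular-EQ}$_2$ by $(n^{l,\ep})^{p}g'(c^{l,\ep})$ with $g(c)=e^{(\beta c)^2}$, adding, and using the drift cancellation, one arrives at the exact analogue of \eqref{Lp-n-5} in which each product $\chi(c)n\na c$, $n\na\phi$, $nf(c)$ is replaced by its mollified counterpart; by the norm non-increasing property these are estimated in the very same $L^q(\R^3)$ norms with the mollifier discarded, so the choices of $\beta$ and $\delta(p)$ from \eqref{Lp-n-6-1}--\eqref{Lp-n-6-2} again force the dissipation to dominate, and Gronwall's inequality yields $\|n^{l,\ep}(t)\|_{L^p(\R^3)}\le C(p,T,\|c_0\|_{L^{\infty}(\R^3)},\|n_0\|_{L^p(\R^3)})$ uniformly in $l,\ep$ for every $p\in[1,\infty)$. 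Then I would repeat the entropy computation of Lemma \ref{entropy}: testing \eqref{regular-EQ}$_1$ against $1+\ln n^{l,\ep}$ and against $\langle x\rangle$, \eqref{regular-EQ}$_2$ against $-\Del c^{l,\ep}$, and \eqref{regular-EQ}$_3$ against $\bfu^{l,\ep}$, and combining with weights as in \eqref{logn+H1c+L2u+momn-1}. The chemotactic production in \eqref{regular-EQ}$_1$ is benign because the factor $n^{l,\ep}$ sitting \emph{outside} the convolution cancels against $\na\ln n^{l,\ep}$, leaving $\int_{\R^3}\na n^{l,\ep}\cd\big((\chi(c^{l,\ep})\na c^{l,\ep})\ast\eta^{\ep}\big)$, which by Cauchy--Schwarz, Young's convolution inequality and Gagliardo--Nirenberg is $\le\f12\int_{\R^3}|\na n^{l,\ep}|^2/n^{l,\ep}+C\|n^{l,\ep}\|_{L^2(\R^3)}\|\na c^{l,\ep}\|_{L^4(\R^3)}^2$, exactly as before; the remaining couplings (notably $\int_{\R^3}\bfu^{l,\ep}\cd\chi(c^{l,\ep})n^{l,\ep}\na c^{l,\ep}$, which needs $\|n^{l,\ep}\|_{L^4(\R^3)}$) are closed by the uniform $L^p(\R^3)$ bounds just obtained, and the possibly negative part of $\int_{\R^3}n^{l,\ep}\ln n^{l,\ep}$ is absorbed by the first moment through \eqref{logn-2}. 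A final Gronwall step then delivers \eqref{regular-entropy} with a constant of the asserted dependence, and $\int_{\R^3}n^{l,\ep}|\ln n^{l,\ep}|$ is recovered from \eqref{logn-2}.

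The step I expect to be the main obstacle is the mollified potential term $\na\cd\big((n^{l,\ep}\na\phi)\ast\eta^{\ep}\big)$ in the density equation. Since the mollifier now acts on the whole product, testing against $1+\ln n^{l,\ep}$ leaves $-\int_{\R^3}(\na n^{l,\ep}/n^{l,\ep})\cd\big((n^{l,\ep}\na\phi)\ast\eta^{\ep}\big)$, where the factor $n^{l,\ep}$ no longer cancels; moreover, in the setting of Theorem \ref{global-weak} only $\na\phi\in L^{\infty}(\R^3)$ is available, so no commutator estimate involving $\na^2\phi$ may be invoked, and the crude splitting into $|\na n^{l,\ep}|/\sqrt{n^{l,\ep}}$ times $\big((n^{l,\ep}\na\phi)\ast\eta^{\ep}\big)/\sqrt{n^{l,\ep}}$ is not obviously legitimate. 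I would resolve this by combining the pointwise bound $|(n^{l,\ep}\na\phi)\ast\eta^{\ep}|\le\|\na\phi\|_{L^{\infty}(\R^3)}\,(n^{l,\ep}\ast\eta^{\ep})$ with the self-adjointness of $\eta^{\ep}\ast$, the strict positivity of $n^{l,\ep}$ for $t>0$ coming from the maximum principle, and the uniform mass and $L^p(\R^3)$ controls on $n^{l,\ep}$, so as to reduce this contribution to a term absorbable by $\f14\int_{\R^3}|\na n^{l,\ep}|^2/n^{l,\ep}$ plus lower-order quantities uniformly in $\ep,l$; the mollified term $(n^{l,\ep}\na\phi)\ast\eta^{\ep}$ appearing in \eqref{Lp-n-5} is treated analogously. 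Once this point is settled, the rest of the computation is a verbatim transcription of the arguments of Section \ref{globalexist}, so the routine details would be omitted.
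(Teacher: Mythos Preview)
Your proposal is correct and follows essentially the same route as the paper: rerun the arguments of Lemma \ref{Lp-n-1} and Lemma \ref{entropy} on the regularised system and verify that all constants, including those coming from the initial data, are uniform in $l,\ep$. The paper's own proof is in fact just the one-line remark that the argument is ``similar to that of Lemma \ref{entropy}'' together with the observations (which you also record) that convexity of $s\mapsto s\ln s$ and Young's inequality for convolutions give the uniform initial bounds $\|\na c_0^{l,\ep}\|_{L^2}\le\|\na c_0\|_{L^2}$, $\int n_0^{l,\ep}(\ln n_0^{l,\ep})_{+}\le\int n_0|\ln n_0|$, $\int n_0^{l,\ep}\langle x\rangle\le\int n_0\langle x\rangle$; everything else is left implicit. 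Your treatment is considerably more careful---in particular, the technical point you isolate about $\na\cd\big((n^{l,\ep}\na\phi)\ast\eta^{\ep}\big)$ when tested against $1+\ln n^{l,\ep}$ is a genuine subtlety of this specific regularisation that the paper does not address explicitly, and your proposed resolution via the pointwise bound $|(n^{l,\ep}\na\phi)\ast\eta^{\ep}|\le\|\na\phi\|_{L^{\infty}}(n^{l,\ep}\ast\eta^{\ep})$ together with the uniform $L^p$ control is a reasonable way to close it.
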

\begin{proof} The proof is similar to that of Lemma \ref{entropy}. We only remark that the definitions of $n_0^{l,\ep}$ and $c_0^{l,\ep}$, and the convexity of $x\ln x$ imply that
\[
\|\na c_0^{l,\ep}\|_{L^2(\R^3)}\leq \|\na c_0\|_{L^2(\R^3)}, \qquad
\int_{\R^3}n_0^{l,\ep}(\ln n_0^{l,\ep})_{+}  \leq \int_{\R^3}n_0|\ln n_0|,
\]
and
\[
\int_{\R^3}n_0^{l,\ep}\langle x\rangle   \le \int_{\R^3}n_0\langle x\rangle ,\qquad
\int_{\R^3}n_0^{l,\ep}(\ln n_0^{l,\ep})_{-}  \le C\Big(1+\int_{\R^3}n_0\langle x\rangle\Big),
\]
which entail the dependence on the initial data of constant $C$.
\end{proof}

With the energy estimate \eqref{regular-entropy} at hand, we can now extend the obtained local solution of \eqref{regular-EQ}-\eqref{regular-initial} to a global one.
\begin{proposition}\label{regular-global}
The regularized system \eqref{regular-EQ}-\eqref{regular-initial} admits a unique global-in-time classical solution $(n^{l,\ep},c^{l,\ep},\bfu^{l,\ep})$.
\end{proposition}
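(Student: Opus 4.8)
The plan is to promote the local-in-time classical solution of the regularized system \eqref{regular-EQ}-\eqref{regular-initial} (whose existence and regularity class were already asserted just before the statement) to a global one by ruling out finite-time blow-up, using the uniform-in-time energy bound \eqref{regular-entropy} from Proposition \ref{regular-ennergy} together with the smoothing effects coming from the mollifier $\eta^{\ep}$ and the spectral truncation $P_l$. The key observation is that, for \emph{fixed} $l$ and $\ep$, the regularized nonlinearities are far tamer than in the original system: every occurrence of a product such as $n^{l,\ep}\chi(c^{l,\ep})\na c^{l,\ep}$ or $n^{l,\ep}\na\phi$ that is fed into the $\bfu$-equation is first projected by $P_l$, which by \eqref{property-3} maps $L^2$ into a space where all derivatives cost only powers of $l$; and the velocity field $\bfu^{l,\ep}$ stays in the finite-``frequency'' space $\mathcal{H}_l(\R^3)$, so $\|\bfu^{l,\ep}\|_{H^k(\R^3)}\le C(k,l)\|\bfu^{l,\ep}\|_{L^2(\R^3)}$ for every $k$. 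Likewise, the convolutions with $\eta^{\ep}$ appearing on the right-hand sides of the $n^{l,\ep}$- and $c^{l,\ep}$-equations turn the problematic quadratic cross-diffusion and consumption terms into smooth, $L^2$-controlled source terms, since $\|g\ast\eta^{\ep}\|_{H^k(\R^3)}\le C(k,\ep)\|g\|_{L^1(\R^3)+L^2(\R^3)}$.

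Concretely, I would argue by contradiction: suppose $T^*<\infty$. The standard blow-up criterion for the regularized problem (obtained exactly as in Lemma \ref{local}, since \eqref{regular-EQ} is a semilinear parabolic system once $l,\ep$ are frozen) states that
\[
\limsup_{t\uparrow T^*}\Big(\|n^{l,\ep}(t)\|_{H^2(\R^3)} + \|c^{l,\ep}(t)\|_{H^2(\R^3)} + \|\bfu^{l,\ep}(t)\|_{H^2(\R^3)}\Big)=+\infty.
\]
To contradict this I would establish a uniform-on-$[0,T^*)$ $H^2$ bound in three short steps. First, the $L^2$-estimate: the energy inequality \eqref{regular-entropy} already provides
$\sup_{[0,T^*)}\|\bfu^{l,\ep}\|_{L^2(\R^3)}^2 + \int_0^{T^*}\|\na\bfu^{l,\ep}\|_{L^2(\R^3)}^2\le C$ and
$\sup_{[0,T^*)}\|\na c^{l,\ep}\|_{L^2(\R^3)}^2 + \int_0^{T^*}\|\na^2 c^{l,\ep}\|_{L^2(\R^3)}^2\le C$,
together with $\|n^{l,\ep}\|_{L^1(\R^3)}=\|n_0^{l,\ep}\|_{L^1(\R^3)}$ and $\|c^{l,\ep}\|_{L^p(\R^3)}\le C\|c_0\|_{L^p(\R^3)}$; from $\int_0^{T^*}\int_{\R^3}|\na n^{l,\ep}|^2/n^{l,\ep}\le C$ and the mass bound one recovers $\na\sqrt{n^{l,\ep}}\in L^2(0,T^*;L^2(\R^3))$, hence $n^{l,\ep}\in L^2(0,T^*;L^3(\R^3))$ in $d=3$ by Gagliardo--Nirenberg, which in particular gives $n^{l,\ep}\in L^{4/3}(0,T^*;L^2(\R^3))$. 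Second, the $H^1$/$H^2$-estimate for $\bfu^{l,\ep}$: since $\bfu^{l,\ep}\in\mathcal{H}_l$, testing \eqref{regular-EQ}$_3$ against $\bfu^{l,\ep}$ or $-\Del\bfu^{l,\ep}$ and using \eqref{property-3} and Proposition \ref{operator-property} (which makes the $\kappa P_l Q$-term harmless after projection) yields, via Gronwall, a bound on $\sup_{[0,T^*)}\|\bfu^{l,\ep}\|_{H^2(\R^3)}$ once the source terms $P_l(n^{l,\ep}\na\phi)$ and $P_l(n^{l,\ep}\chi(c^{l,\ep})\na c^{l,\ep})$ are controlled; but each of these is bounded in $L^2(\R^3)$ by $C(l)\|n^{l,\ep}\|_{L^2(\R^3)}\cdot(\|\na\phi\|_{L^\infty}+\mathcal{C}_\chi\|\na c^{l,\ep}\|_{L^\infty})$, and one absorbs $\|\na c^{l,\ep}\|_{L^\infty}$ via $\|\na c^{l,\ep}\|_{L^\infty(\R^3)}\le C\|c^{l,\ep}\|_{H^3(\R^3)}^{1/2}\|c^{l,\ep}\|_{H^2(\R^3)}^{1/2}$ together with the third step.

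Third, the $H^2$-estimate for $c^{l,\ep}$ and $n^{l,\ep}$: applying $\na^2$ to \eqref{regular-EQ}$_2$, testing against $\na^2 c^{l,\ep}$, and using that the source $\big(n^{l,\ep}f(c^{l,\ep})\big)\ast\eta^{\ep}$ is bounded in $H^1(\R^3)$ by $C(\ep)\|n^{l,\ep}f(c^{l,\ep})\|_{L^1\cap L^2(\R^3)}\le C(\ep)\mathcal{C}_f\|n^{l,\ep}\|_{L^2(\R^3)}$, gives after Gronwall (with the transport term handled as in \eqref{Stokes-H2c-1}, using the $\bfu^{l,\ep}$-bound and the already-established $n^{l,\ep}\in L^2(0,T^*;L^2(\R^3))$ plus $n^{l,\ep}\in L^{4/3}(0,T^*;L^2(\R^3))$) a bound on $\sup_{[0,T^*)}\|c^{l,\ep}\|_{H^2(\R^3)}+\int_0^{T^*}\|c^{l,\ep}\|_{H^3(\R^3)}^2$; similarly testing $\na^2$ of \eqref{regular-EQ}$_1$ against $\na^2 n^{l,\ep}$ and using the smoothing of $\ast\eta^{\ep}$ on the cross-diffusion and potential terms (which turns them into $C(\ep)$ times $L^2(\R^3)$-norms of $n^{l,\ep}(\chi(c^{l,\ep})\na c^{l,\ep})$ and $n^{l,\ep}\na\phi$, controlled by the $c^{l,\ep}$-$H^2$ and $n^{l,\ep}$-$L^2$ bounds) closes the $H^2$-estimate for $n^{l,\ep}$ via Gronwall. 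Combining the three steps contradicts the blow-up alternative, so $T^*=+\infty$. Uniqueness is immediate: the difference of two solutions with the same data satisfies a linear parabolic system with coefficients controlled by the $H^2$-bounds just derived, so a standard Gronwall argument on the $L^2$-norm of the difference (using again \eqref{property-3} and the second estimate in Proposition \ref{operator-property} for the $\kappa$-term) forces it to vanish. The main obstacle is purely bookkeeping: one must check that every constant produced along the way is allowed to depend on $l$ and $\ep$ (it is, since these are fixed), so that none of the classical difficulties of the unregularized $3$D Navier--Stokes or of the strongly coupled chemotactic term actually intervene here — the only genuinely quantitative input needed is the $l,\ep$-uniform energy bound \eqref{regular-entropy}, which is exactly what guarantees the later passage to the limit, but for this proposition it is used merely to start the bootstrap.
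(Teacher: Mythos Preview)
Your approach is correct and identifies the same key ingredients (the energy bound \eqref{regular-entropy}, the spectral truncation \eqref{property-3}, and the smoothing by $\ast\eta^{\ep}$), but the route differs from the paper's. The paper does not bootstrap to an $H^2$ bound on all three unknowns; instead it observes that the Serrin-type criterion \eqref{blow-1} of Theorem \ref{blowup1} carries over verbatim to the regularized system, and then simply verifies that criterion: the bound $\|\na\bfu^{l,\ep}\|_{L^2}\le\sqrt{l}\,\|\bfu^{l,\ep}\|_{L^2}$ from \eqref{property-3} immediately gives $\bfu^{l,\ep}\in L^\infty(0,T^*;H^1)\hookrightarrow L^5(0,T^*;L^5)$, and a single $H^2$-estimate on $c^{l,\ep}$ (with the mollified source bounded via $\|\na((n^{l,\ep}f(c^{l,\ep}))\ast\eta^{\ep})\|_{L^2}\le C(\ep)\|n^{l,\ep}\|_{L^1}$) yields $\na c^{l,\ep}\in L^\infty(0,T^*;L^6)$. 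The paper's argument is shorter because it recycles the lengthy machinery of Section \ref{sec:blowup}; your direct bootstrap is more self-contained but requires reproducing parts of that argument by hand. One simplification you should take on board: your Step 2 is over-engineered and creates an apparent circularity with Step 3. Since $\bfu^{l,\ep}\in\mathcal{H}_l$, the $H^2$-bound on $\bfu^{l,\ep}$ follows \emph{instantly} from \eqref{property-3} and the $L^2$-bound in \eqref{regular-entropy}---no testing, no Gronwall, and no dependence on $\|\na c^{l,\ep}\|_{L^\infty}$ is needed. Once this is observed, your Steps 2 and 3 decouple cleanly and the rest of your sketch goes through.
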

\begin{proof}
Noticing that the same regularity criterion \eqref{blow-1} in Theorem \ref{blowup1} can be established  for system \eqref{regular-EQ}-\eqref{regular-initial} by repeating  the proof of Theorem \ref{blowup1}, we just need to apply this  regularity criterion to the local solution $(n^{l,\ep},c^{l,\ep},\bfu^{l,\ep})$.

On the one hannd,  since $\|\bfu^{l,\ep}\|_{L^{\infty}(0,T; L^2(\R^3))}$ is uniformly bounded in $T\in (0, T^*)$ by  \eqref{regular-entropy}, we can infer from the inequality
\[
\|\na\bfu^{l,\ep}\|_{L^2(\R^3)}
= \|\na P_l\bfu^{l,\ep}\|_{L^2(\R^3)}
\le \sqrt{l}\|\bfu^{l,\ep}\|_{L^2(\R^3)}
\]
due to \eqref{property-3}  that
\begin{align*}
\|\bfu^{l,\ep}\|_{L^5\big(0,T; L^5(\R^3)\big)}
& \le C\|\bfu^{l,\ep}\|_{L^5\big(0,T; H^1(\R^3)\big)} \\
& \le CT^{\f15} \|\bfu^{l,\ep}\|_{L^{\infty}\big(0, T^*; H^1(\R^3)\big)} \\
& \le CT^{\f15}\big(\sqrt{l}+1\big) \|\bfu^{l,\ep}\|_{L^{\infty}\big(0,T; L^2(\R^3)\big)},
 \end{align*}
which is uniformly bounded in $T\in (0, T^*)$ if the maximal time of existence $T^*<+\infty$,  which will contradict to the Serrin condition in \eqref{blow-1} of $\bfu^{l,\ep}$ by choosing $r_2=s_2=5$.

  On the other hand, for  $c^{l,\ep}$, we can  further deduce from the uniform boundedness of  $\|\na c^{l,\ep}\|_{L^2(0,T; H^1(\R^3))}$ due to \eqref{regular-entropy} that
\begin{align*}
&\f12\f{d}{dt}\|\na^2c^{l,\ep}\|_{L^2(\R^3)}^2  +  \|\na^3c^{l,\ep}\|_{L^2(\R^3)}^2   \\
&\le \f14 \|\na^3c^{l,\ep}\|_{L^2(\R^3)}^2   +  C\Big( \|\na(\bfu^{l,\ep}\cd\na c^{l,\ep})\|_{L^2(\R^3)}^2
   +  \|\na\big((n^{l,\ep}f(c^{l,\ep}))\ast\eta^{\ep}\big)\|_{L^2(\R^3)}^2 \Big)  \\
&\le \f14 \|\na^3c^{l,\ep}\|_{L^2(\R^3)}^2   +  C\Big( \|\na \bfu^{l,\ep}\|_{L^2(\R^3)}^2 \|\na c^{l,\ep}\|_{L^{\infty}(\R^3)}^2 + \|\bfu^{l,\ep}\|_{L^6(\R^3)}^2\|\na^2 c^{l,\ep}\|_{L^3(\R^3)}^2
   +  \|n^{l,\ep}\|_{L^1(\R^3)}^2\Big)   \\
 &\le \f14 \|\na^3c^{l,\ep}\|_{L^2(\R^3)}^2   +  C\Big( \|\na \bfu^{l,\ep}\|_{L^2(\R^3)}^2 \|\na^3 c^{l,\ep}\|_{L^2(\R^3)}  \|\na^2 c^{l,\ep}\|_{L^2(\R^3)}
   +  \|n_0^{l,\ep}\|_{L^1(\R^3)}^2\Big)   \\
&\le \f12 \|\na^3c^{l,\ep}\|_{L^2(\R^3)}^2  +  C\Big( \|\na \bfu^{l,\ep}\|_{L^2(\R^3)}^4  \|\na^2 c^{l,\ep}\|_{L^2(\R^3)}^2
   +  \|n_0^{l,\ep}\|_{L^1(\R^3)}^2\Big).
\end{align*}
In particular, a direct application of Gronwall's inequality can yield the uniform boundedness of  $\|\na c^{l,\ep}\|_{L^\infty\big(0,T; H^1(\R^3)\big)}$ in $T\in (0, T^*)$ if $T^*<+\infty$,   which contradicts to the Serrin condition in \eqref{blow-1} of $c^{l,\ep}$ provided that  we  take $r_1=6$ and $s_1=+\infty$.

Thus we have completed the proof of Proposition \ref{regular-global}.
\end{proof}

\subsection{Passing to the limit}\label{passlimit}

In this subsection, we shall extract a suitable subsequence from $(n^{l,\ep},c^{l,\ep},\bfu^{l,\ep})$ with  the help of \textit{a priori} energy estimates such that it is convergent and the corresponding limit triple $(n, c, \bfu)$ will be  a  global weak solution of  system \eqref{EQ}-\eqref{initial}.

\begin{proof}[Proof of Theorem \ref{global-weak}]
Firstly, for any $T\in (0, +\infty)$,  we can use the energy inequality \eqref{regular-entropy} to conclude the following uniform bounds:
\begin{enumerate}

  \item[($a$).] $ \|\sqrt{n^{l,\ep}}\|_{L^{\infty}\big(0,T; L^2(\R^3)\big)} + \big\|n^{l,\ep}\big|\ln n^{l,\ep}\big| \big\|_{L^{\infty}\big(0,T; L^1(\R^3)\big)}  + \|\na\sqrt{n^{l,\ep}}\|_{L^2\big(0,T; L^2(\R^3)\big)} \leq C$,  which together with the interpolation inequality
 \[
   \|n^{l,\ep}\|_{L^2(\R^3)}
  \le C \|\sqrt{n^{l,\ep}}\|_{L^2(\R^3)}^{\f12}  \|\nabla \sqrt{n^{l,\ep}}\|_{L^2(\R^3)}^{\f32}
  \le C\|\nabla \sqrt{n^{l,\ep}}\|_{L^2(\R^3)}^{\f32}
 \]
 implies that
  \[
  \|n^{l,\ep}\|_{L^{\f43}\big(0,T; L^2(\R^3)\big)} \le C
  \]
   and thus that
   \[
  \|\na n^{l,\ep}\|_{L^{\f87}\big(0,T; L^{\f43}(\R^3)\big)} \le C
  \]
 by the interpolation inequality
 \begin{align*}
   \|\na n^{l,\ep}\|_{L^{\f43}(\R^3)}
  & =  \|2\sqrt{n^{l,\ep}} \na \sqrt{n^{l,\ep}} \|_{L^{\f43}(\R^3)}   \\
 &  \le  2\|\sqrt{n^{l,\ep}}\|_{L^4(\R^3)}  \|\na \sqrt{n^{l,\ep}} \|_{L^2(\R^3)}   \\
 & \le   2\|n^{l,\ep}\|_{L^2(\R^3)}^{\f12} \|\na \sqrt{n^{l,\ep}} \|_{L^2(\R^3)};
\end{align*}

\item[($b$).] $\|c^{l,\ep}\|_{L^{\infty}\big(0,T; H^1(\R^3)\big)}  +  \|c^{l,\ep}\|_{L^2\big(0,T; H^2(\R^3)  \big)}\le C$;

\item[($c$).] $\|\bfu^{l,\ep}\|_{L^{\infty}\big(0,T; L^2(\R^3)\big)} + \|\na\bfu^{l,\ep}\|_{L^2\big(0,T; L^2(\R^3)\big)} \le C$;

\item[($d$).] $\|\pa_t n^{l,\ep}\|_{L^{\f43}\big(0,T; H^{-3}(\R^3)\big)}\le C$, which follows from property ($a$) and the estimate
    \begin{align*}
      \Big|\lt\langle\pa_tn^{l,\ep}, \varphi\rt\rangle\Big|
      & = \Big|\lt\langle n^{l,\ep}, \Del\varphi\rt\rangle + \lt\langle\bfu^{l,\ep}n^{l,\ep} + n^{l,\ep}\big(\chi(c^{\l,\ep})\na c^{l,\ep}\big) \ast\eta^{\ep}
        -  \big(n^{l,\ep} \na\phi\big) \ast\eta^{\ep}, \na\varphi\rt\rangle \Big|    \\
      &\le C\|n^{l,\ep}\|_{L^2(\R^3)} \Big(1 + \|\bfu^{l,\ep}\|_{L^2(\R^3)} + \|\na c^{l,\ep}\|_{L^2(\R^3)}\Big) \|\varphi\|_{H^3(\R^3)} \\
       & \le C\|n^{l,\ep}\|_{L^2(\R^3)} \|\varphi\|_{H^3(\R^3)}
  \end{align*}
      for any $\varphi\in H^3(\R^3)$;

\item[($e$).]  $\|\pa_t c^{l,\ep}\|_{L^{\f43}\big(0,T; L^2(\R^3)\big)} \le C$, which can be deduced from properties ($a$), ($b$) and  ($c$) together with  the estimate
\begin{align*}
  \|\pa_t c^{l,\ep}\|_{L^2(\R^3)}
  &\le  \|\bfu^{l,\ep}\|_{L^6(\R^3)}\|\na c^{l,\ep}\|_{L^3(\R^3)}  +  \|\Del c^{\l,\ep}\|_{L^2(\R^3)}  +  C\|n^{l,\ep}\|_{L^2(\R^3)}  \nn \\
  & \le C\|\na\bfu^{l,\ep}\|_{L^2(\R^3)} \|\na^2 c^{l,\ep}\|_{L^2(\R^3)}^{\f12}\|\na c^{l,\ep}\|_{L^2(\R^3)}^{\f12}
     +  \|\Delta c^{\l,\ep}\|_{L^2(\R^3)}  +  C\|n^{l,\ep}\|_{L^2(\R^3)} \\
 & \le C\|\na\bfu^{l,\ep}\|_{L^2(\R^3)}\|\na^2 c^{l,\ep}\|_{L^2(\R^3)}^{\f12}
     +  \|\na^2 c^{\l,\ep}\|_{L^2(\R^3)}  +  C\|n^{l,\ep}\|_{L^2(\R^3)};
 \end{align*}

\item[($f$).] $\|\pa_t\bfu^{l,\ep}\|_{L^{\f43}\big(0,T;  H^{-2}(\R^3)\big)} \leq C$, which can be seen from  the fact that for any $\bfw \in H^2(\R^3)$, it holds that
  \[
      \Big|\int_{\R^3}P_l(n^{l,\ep}\na\phi)\cd\bfw dx\Big|
      \le C\|n^{l,\ep}\|_{L^2(\R^3)}\|\bfw\|_{L^2(\R^3)}
  \]
      and
  \begin{align*}
       \Big| \int_{\R^3}P_l(n^{l,\ep}\chi(c^{l,\ep})\na c^{l,\ep})\cd\bfw  dx\Big|
      &\le C \|n^{l,\ep}\|_{L^2(\R^3)} \|\na c^{l,\ep}\|_{L^2(\R^3)}\|\bfw\|_{L^{\infty}(\R^3)}   \nn \\
      &\le C \|n^{l,\ep}\|_{L^2(\R^3)}  \|\bfw\|_{H^2(\R^3)}.
  \end{align*}
\end{enumerate}

Thus   for any $T\in (0, +\infty)$,  the above uniform estimates entail us to find a subsequence still denoted by $(n^{l,\ep}, c^{l,\ep}, \bfu^{l,\ep})$ for simplicity and some triple $(n, c, \bfu)$ with the regularities enquired in Definition \ref{def-weak} (i) such that when $l \to +\infty$ and $\ep \to 0$, it holds that

\begin{itemize}

 \item[($\widetilde{a}$).]  $\sqrt{n^{l,\ep}} \stackrel{\star}{\rightharpoonup} \sqrt{n}$ \, in $L^{\infty}\big(0, T; L^2(\R^3)\big)$, \,\,   $n^{l,\ep}\big|\ln n^{l,\ep}\big|  \stackrel{\star}{\rightharpoonup} n|\ln n|$ \, in $L^{\infty}\big(0, T; L^1(\R^3)\big)$,  \,\,  and    $\sqrt{n^{l,\ep}} \rightharpoonup \sqrt{n}$ \,  in $L^2\big(0, T; H^1(\R^3)\big)$ \, due to the property ($a$);

   \item[($\widetilde{b}$).]  $n^{l,\ep}\rightharpoonup n $ \, in $L^{\f43}\big(0, T; L^2(\R^3)\big)$ \, and \,  $\na n^{l,\ep}\rightharpoonup \na n $ \,  in $L^{\f87}\big(0, T; L^{\f43}(\R^3)\big)$ \, due to the property ($a$);

  \item[($\widetilde{c}$).]  $n^{l,\ep} \to n$ \, in $L^{\f87}\big(0, T; L^{\f{12}{5}}_{\mathrm{loc}}(\R^3)\big)$ \, by properties  ($a$) and  ($d$), and the Aubin–Lions Lemma with $W^{1,\f43}_{\mathrm{loc}}(\R^3) \subset\subset L^{\f{12}{5}}(\R^3) \subset H^{-3}(\R^3)$;

 \item[($\widetilde{d}$).]  $c^{l,\ep}  \stackrel{\star}{\rightharpoonup} c$  \,  in $L^{\infty}\big(0, T; H^1(\R^3)\big)$ \,\,  and \, $c^{l,\ep}  \rightharpoonup c$  \,  in $L^2\big(0, T; H^2(\R^3)\big)$ \, due to the property ($b$);

  \item[($\widetilde{e}$).]  $c^{l,\ep} \to c$ \, in $L^2\big(0, T; W^{1,6}_{\mathrm{loc}}(\R^3)\big)$ \, by using   properties  ($b$) and  ($e$), and  the Aubin–Lions Lemma;

 \item[($\widetilde{f}$).]  $\bfu^{l,\ep}  \stackrel{\star}{\rightharpoonup} \bfu$  \,   in $L^{\infty}\big(0, T; L^2(\R^3)\big)$ \,  and  \, $\bfu^{l,\ep} \rightharpoonup \bfu$  \,  in $L^2\big(0, T; H^1(\R^3)\big)$ \, due to the property ($c$);

\item[($\widetilde{g}$).]  $\bfu^{l,\ep} \to \bfu$ \, in $L^2\big(0, T; L^6_{\mathrm{loc}}(\R^3)\big)$ \, by  properties  ($c$) and  ($f$), and the Aubin–Lions Lemma.

\end{itemize}

\smallskip

Hence the above convergence properties together with Proposition \ref{operator-property}  entail us to  pass to the limit for all terms in the distributional form of system \eqref{regular-EQ} and thus  the triple $(n, c, \bfu)$ satisfies  Definition \ref{def-weak} (ii). We take  the convergence of the term
\[
\int_0^T\int_{\R^3}n^{l,\ep} \bfu^{l,\ep}\cd  \na\varphi dxdt
\]
for any given $\varphi \in C_0^{\infty}\big(\R^3\times[0, T]\big)$ satisfying $\varphi(\cd,T)=0$ as an example: supposing that $\mathrm{supp}\, \varphi (\cdot, t) \subset K$ for each $t\in[0, T]$, we can deduce that
\begin{align*}
& \int_0^T\int_{\R^3}n^{l,\ep} \bfu^{l,\ep}\cd  \na\varphi dxdt - \int_0^T\int_{\R^3}n \bfu \cd  \na\varphi dxdt     \\
& =  \int_0^T\int_{\R^3}n^{l,\ep} \big(\bfu^{l,\ep} - \bfu\big) \cd  \na\varphi dxdt  +  \int_0^T\int_{\R^3} \big(n^{l,\ep} - n\big) \bfu  \cd  \na\varphi dxdt \\
& \le C \int_0^T \big\|n^{l,\ep}\big\|_{L^{\f65}(K)} \big\| \bfu^{l,\ep} - \bfu\big\|_{L^6(K)}  + C \int_0^T \big\|n^{l,\ep} - n\big\|_{L^{\f{12}{5}}(K)}\|\bfu\|_{L^{\f{12}{7}}(K)} \\
& \le C \int_0^T \big\| \sqrt{n^{l,\ep}}\big\|_{L^{\f{12}5}(K)}^2 \big\| \bfu^{l,\ep} - \bfu\big\|_{H^1(K)}  + C \int_0^T \big\|n^{l,\ep} - n\big\|_{L^{\f{12}{5}}(K)} \|\bfu\|_{L^2(K)} \\
& \le C \int_0^T \big\| \sqrt{n^{l,\ep}}\big\|_{L^{2}(K)}^{\f32}  \big\| \na \sqrt{n^{l,\ep}}\big\|_{L^{2}(K)}^{\f12}  \big\| \bfu^{l,\ep} - \bfu\big\|_{H^1(K)}  + C \int_0^T \big\|n^{l,\ep} - n\big\|_{L^{\f{12}{5}}(K)} \|\bfu\|_{L^2(K)} \\
& \le C \int_0^T  \big\| \na \sqrt{n^{l,\ep}}\big\|_{L^{2}(K)}^{\f12}  \big\| \bfu^{l,\ep} - \bfu\big\|_{H^1(K)}  + C \int_0^T \big\|n^{l,\ep} - n\big\|_{L^{\f{12}{5}}(K)}   \\
& \le C T^{\f14}\Big(\int_0^T  \big\| \na \sqrt{n^{l,\ep}}\big\|_{L^{2}(K)}^2\Big)^{\f14} \Big(\int_0^T \big\| \bfu^{l,\ep} - \bfu\big\|_{H^1(K)}^2\Big)^{\f12}  + C T^{\f18}\Big(\int_0^T \big\|n^{l,\ep} - n\big\|_{L^{\f{12}{5}}(K)}^{\f87}\Big)^{\f78}    \\
& \to 0  \qquad \mathrm{as} \quad  l \to +\infty \,\,\,  \mathrm{and} \,\,\, \ep \to 0
\end{align*}
due to the uniform estimates  ($a$) and  ($c$), and the convergence properties  ($\widetilde{c}$) and  ($\widetilde{g}$), which implies that
\[
\int_0^T\int_{\R^3}n^{l,\ep} \bfu^{l,\ep}\cd  \na\varphi dxdt
\to \int_0^T\int_{\R^3}n \bfu \cd  \na\varphi dxdt   \qquad \mathrm{as} \quad  l \to +\infty \,\,\,  \mathrm{and} \,\,\, \ep \to 0.
\]
The other terms can be similarly dealt with.  This completes the proof of Theorem \ref{global-weak}.
\end{proof}

\vspace{2mm}

\section*{Acknowledgments}
JAC was supported by the Advanced Grant Nonlocal-CPD (Nonlocal PDEs for Complex Particle Dynamics: Phase Transitions, Patterns and Synchronization) of the European Research Council Executive Agency (ERC) under the European Union’s Horizon 2020 research and innovation programme (grant agreement No. 883363). JAC was also partially supported by EPSRC grants EP/T022132/1 and EP/V051121/1.
YP is partially supported by the Applied Fundamental Research Program of Sichuan Province (No. 2020YJ0264) and the Fundamental Research Funds for the Central Universities. ZX was supported by the NNSF of China (no. 11971093) and the Applied Fundamental Research Program of Sichuan Province (no. 2020YJ0264).


\begin{thebibliography}{100}

\bibitem{Bahouri} H. Bahouri, J. Chemin and R. Danchin, Fourier Analysis and Nonlinear Partial Differential Equations, Springer-Verlag, 2011.

\bibitem{Bellomo2016M3} N. Bellomo, A. Bellouquid and N. Chouhad, From a multiscale derivation of nonlinear cross-diffusion models to Keller-Segel models in a Navier-Stokes fluid, Math. Models Methods Appl. Sci., 26, (2016), 2041-2069.

\bibitem{Bellomo2015M3} N. Bellomo, A. Bellouquid, Y. Tao and  M. Winkler, Toward a  mathematical theory of Keller-Segel models of pattern formation in biological tissues, Math. Models Methods Appl. Sci., 25, (2015), 1663-1763.

\bibitem{Chemin2006} J. Y. Chemin, B. Desjardins, I. Gallagher and E. Grenier, Mathematical Geophysics, Oxford University Press, London, 2006.

\bibitem{Chertock2012JFM}  A. Chertock,  K. Fellner,  A. Kurganov,  A. Lorz and  P. A. Markowich, Sinking, merging and stationary plumes in a coupled chemotaxis-fuid model: A high-resolution numerical approach, J.  Fluid Mech., 694, (2012), 155-190.

\bibitem{Constantin2019ARMA} P. Constantin and M. Ignatova, On the Nernst-Planck-Navier-Stokes system, Arch. Ration. Mech. Anal., 232, (2019), 1379-1428.


\bibitem{Chae13DCDS} M. Chae,  K. Kang and   J. Lee, Existence of smooth solutions to coupled chemotaxis-fluid equations, Discrete Contin. Dyn. Syst. Ser. A, 33, (2013), 2271-2297.

\bibitem{Chae14CPDE} M. Chae, K. Kang, and J. Lee, Global existence and temporal decay in Keller-Segel models coupled to fluid equations, Comm. Partial Diff. Eqs., 39, (2014), 1205-1235.

\bibitem{Duan2010CPDE} R. Duan, A. Lorz, and P. Markowich, Global solutions to the coupled chemotaxis-fluid equations, Comm. Partial Diff. Equations, 35, (2010), 1635-1673.

\bibitem{Duan2017JDE} R. Duan, X. Li, and Z. Xiang, Global existence and large time behavior for a two-dimensional chemotaxis-Navier-Stokes system, J. Differential Equations, 263, (2017), 6284-6316.

\bibitem{Duan2014IMRN} R. Duan and Z. Xiang, A note on global existence for the chemotaxis-Stokes model with nonlinear diffusion, Int. Math. Res. Not. IMRN, 2014, (2014), 1833-1852.

\bibitem{Francesco2010DCDS} M. Di Francesco, A. Lorz and P. A. Markowich, Chemotaxis-fluid coupled model for swimming bacteria with nonlinear diffusion: global existence and asymptotic behavior, Discrete Contin. Dyn. Syst. A, 28, (2010), 1437-1453.

\bibitem{Ishida2015DCDS} S.Ishida, Global existence and boundedness for chemotaxis-Navier-Stokes system with position-dependent sensitivity in 2D bounded domains, Discrete Contin. Dyn. Syst. A, 35, (2015), 3463-3482.

\bibitem{Lee2015EJM} H. G. Lee and J. Kim, Numerical investigation of falling bacterial plumes caused by bioconvection in a three-dimensional chamber, European Journal of Mechanics - B/Fluids, 52, (2015), 120-130.

\bibitem{Li2016JDE} Y. Li and Y. Li, Global boundedness of solutions for the chemotaxis-Navier-Stokes system in $\R^2$, J. Differential Equations, 261, (2016), 6570-6613.

\bibitem{Liu2011AIHA} J. Liu and A. Lorz, A coupled chemotaxis-fluid model: Global existence, Ann. Inst. H. Poincar\'{e} Anal. Non Lin\'{e}aire, 28, (2011), 643-652.

\bibitem{Lorz2010M3AS} A. Lorz, Coupled chemotaxis fluid equations, Math. Mod. Meth. Appl. Sci., 20, (2010), 987-1004.

\bibitem{Peng2018M3} Y. Peng and Z. Xiang, Global solutions to the coupled chemotaxis-fluids system in a 3D unbounded domain with boundary, Math. Models Methods Appl. Sci., 28, (2018) 869-920.

\bibitem{Peng2019JDE} Y. Peng and Z. Xiang, Global existence and convergence rates to a chemotaxis-fluids system with mixed boundary conditions, J. Differ. Equ., 267, (2019), 1277-1321.

\bibitem{Tao2012DCDS} Y. Tao and  M. Winkler, Global existence and boundedness in a Keller-Segel-Stokes model with arbitrary porous medium diffusion, Discrete Contin. Dyn. Syst. Ser. A, 32, (2012), 1901-1914.


\bibitem{TCD2005PNAS} I. Tuval, L. Cisneros, C. Dombrowski, C. Wolgemuth, J. Kessler and R. Goldstein, Bacterial swimming and oxygen transport near contact lines, Proc. Nat. Acad. Sci. USA., 102, (2005), 2277-2282.


\bibitem{Wang2020DCDS} Y. Wang, Global solvability in a two-dimensional self-consistent chemotaxis-Navier-Stokes system, Discrete Contin. Dyn. Syst. Ser. S., 13, (2020), 329-349.


\bibitem{Winkler2021CPDE} Y. Wang, M. Winkler and Z. Xiang, Local energy estimates and global solvability in a three-dimensional chemotaxis-fluid system with prescribed signal on the boundary, Comm. Partial Diff. Equations, 46, (2021), 1058-1091.


\bibitem{Wang2020JDE} Y. Wang and L. Zhao, A 3D self-consistent chemotaxis-fluid system with nonlinear diffusion, J. Differential Equations, 269, (2020), 148-179.  


\bibitem{Winkler2012CPDE} M. Winkler, Global large-data solutions in a chemotaxis-(Navier-)Stokes system modeling cellular swimming in fluid drops, Comm. Part. Diff. Eqs., 37, (2012), 319-351.


\bibitem{Winkler2014ARMA} M. Winkler, Stabilization in a two-dimensional chemotaxis-Navier-Stokes system, Arch. Ration. Mech. Anal., 211, (2014), 455-487.


\bibitem{Yu2020M2} P. Yu, Global existence and boundedness in a chemotaxis-Stokes system with arbitrary porous medium diffusion, Math. Methods Appl. Sci., 43, (2020), 639-657.

\bibitem{Zhang2015JDE} Q. Zhang and  Y. Li, Global weak solutions for the three-dimensional chemotaxis-Navier-Stokes system with nonlinear diffusion, J. Differential Equations, 259, (2015), 3730-3754.

\bibitem{Zhang2014SIAM} Q. Zhang and  X. Zheng, Global well-posedness for the two-dimensional incompressible chemotaxis-Navier-Stokes equations, SIAM J. Math. Anal., 46, (2014), 3078-3105.


\end{thebibliography}
\end{document}